\def\disp{\displaystyle}
\def\dref#1{(\ref{#1})}
\theoremstyle{plain}
\newtheorem{theorem}{Theorem}[section]
\newtheorem{lemma}{Lemma}[section]
\theoremstyle{definition}
\newtheorem{definition}{Definition}[section]
\newtheorem{remark}{Remark}[section]
\numberwithin{equation}{section}
\begin{document}

\title{\bf Global existence to a  $3D$ chemotaxis-Navier-stokes system with  nonlinear diffusion and rotation}

\author{
Jiashan Zheng$^{1}$\thanks{Corresponding author.   E-mail address:
 zhengjiashan2008@163.com (J.Zheng)}, Yanyan Li$^{1}$,Xinhua Zou$^{2}$, Dongfang Zhang$^{1}$,   Weifang Yan$^{1}$
 \\
    $^{1}$School of Mathematics and Statistics Science,\\
     Ludong University, Yantai 264025,  P.R.China \\
 $^2$School of Food Engineering,
Ludong University, Yantai 264025,  P.R.China \\
}
\date{}


\maketitle \vspace{0.3cm}
\noindent
\begin{abstract}
This paper is concerned with the following quasilinear chemotaxis--Navier--Stokes
 system
with  nonlinear diffusion and rotation
$$
 \left\{
 \begin{array}{l}
   n_t+u\cdot\nabla n=\Delta n^m-\nabla\cdot(nS(x,n,c)\cdot\nabla c),\quad
x\in \Omega, t>0,\\
    c_t+u\cdot\nabla c=\Delta c-nc,\quad
x\in \Omega, t>0,\\
u_t+\kappa(u \cdot \nabla)u+\nabla P=\Delta u+n\nabla \phi ,\quad
x\in \Omega, t>0,\\
\nabla\cdot u=0,\quad
x\in \Omega, t>0\\
 \end{array}\right.\eqno(CNF)
 $$
 is considered under the no-flux boundary conditions for $n, c$ and the Dirichlet boundary condition for
$u$  in a three-dimensional convex  domain $\Omega\subseteq \mathbb{R}^3$ with smooth boundary, which describes the motion of oxygen-driven bacteria in a fluid. Here
  $\kappa\in \mathbb{R}$ and  $S$ denotes the strength of nonlinear fluid convection and a given tensor-valued function, respectively.
  Assume $m>\frac{10}{9}$ and  $S$ fulfills
$|S(x,n,c)| \leq  S_0(c)$ for all $(x,n,c)\in \bar{\Omega} \times [0, \infty)\times[0, \infty)$ with $S_0(c)$
nondecreasing on $[0,\infty)$, then for any
 reasonably regular initial data,
 the corresponding initial-boundary problem $(CNF)$ admits at least one global weak solution.

%

%

\end{abstract}

\vspace{0.3cm}
\noindent {\bf\em Key words:}~Tensor-valued
sensitivity; Chemotaxis-Navier-stokes system; Nonlinear diffusion;
Global existence

\noindent {\bf\em 2010 Mathematics Subject Classification}:~
35K55, 35Q92, 35Q35, 92C17

\newpage
\section{Introduction}

In biological contexts, many simple life-forms exhibit a complex collective behavior.
This effect, called chemotaxis, is presumed to have a deep impact on the time evolution of a bacteria population.
The chemotaxis system is proposed by Keller and Segel \cite{Keller2710} in 1970. During the past four decades, the
chemotaxis system,  the chemical substrate
can be produced or consumed by the cells
 has been well studied in mathematical biology (\cite{Bellomo1216,Horstmann791,Tao2,Winkler21215}).
 For the more related works in this direction, we mention that a corresponding quasilinear version has been deeply investigated by \cite{Tao794,Wang79k1,Zheng00,Zheng33312186,Zhengddffsdsd6}.

Furthermore, in \cite{Dombrowski3344} it can be observed experimentally that bacteria are suspended in the fluid, which is influenced by the gravitational forcing generated by the aggregation of cells. Then the movement of bacteria is effected by composite factors, namely, random diffusion, chemotactic migration towards gradients of oxygen and transport through the fluid. Taking into account all these processes, Tuval et al. (\cite{Tuval1215}) proposed the model
\begin{equation}
 \left\{\begin{array}{ll}
   n_t+u\cdot\nabla n=\Delta n -C_S\nabla\cdot( nS(c)\cdot\nabla c),\quad
x\in \Omega, t>0,\\
    c_t+u\cdot\nabla c=\Delta c-nf(c),\quad
x\in \Omega, t>0,\\
u_t+\kappa(u\cdot\nabla u)=\nabla P+\Delta u+n\nabla \phi ,\quad
x\in \Omega, t>0,\\
\nabla\cdot u=0,\quad
x\in \Omega, t>0\\
 \end{array}\right.\label{ddfddfghhsdertyyuswddferrttbnmkl1.1}
\end{equation}
for the unknown bacterial density $n$, the oxygen concentration $c$, the fluid velocity field $u$ and the
associated pressure $P$ in the physical domain  $\Omega\subset \mathbb{R}^N.$ Here, $\kappa\in R$ is related to the strength of nonlinear fluid convection, the functions $S(c)$, $f(c)$ and $\phi$ denotes  the chemotactic
sensitivity,   the consumption rate of the oxygen by the bacteria and the gravitational
potential, respectively.
System \dref{ddfddfghhsdertyyuswddferrttbnmkl1.1} describes the movement of the cells towards the higher concentration of the oxygen that is consumed by the cells.
For system \dref{ddfddfghhsdertyyuswddferrttbnmkl1.1}, by making use of {\bf energy-type functionals},
there have been many literatures studied  the existence of global solutions in the bounded
domain or the whole space under some assumption on $f (c)$, $S(c)$ and initial data (see \cite{Bellomo1216,Chaexdd12176,Duan12186,Liu1215,Winklerssddcf31215,Winklercvb12176,Winkler51215,Zhangcvb12176} and references therein).
In fact, in the {\bf two-dimensional setting}, if  $\kappa=0,$  Duan et al. (\cite{Duan12186}) proved global existence of weak solutions for the Cauchy problem of \dref{ddfddfghhsdertyyuswddferrttbnmkl1.1}, under smallness assumptions on either $\nabla\phi$
 or the initial data for oxygen concentration. 
 Winkler \cite{Winklercvb12176}
 proved that \dref{ddfddfghhsdertyyuswddferrttbnmkl1.1}
 has a unique global classical solution in a bounded convex domain $\Omega\subset \mathbb{R}^2$
 with smooth boundary for large data with suitable regularity.
While for {\bf three-dimensional} chemotaxis(-Navier)-Stokes system \dref{ddfddfghhsdertyyuswddferrttbnmkl1.1},
  when $\kappa=0,$ Winkler \cite{Winklercvb12176} proved that the chemotaxis-Stokes system \dref{ddfddfghhsdertyyuswddferrttbnmkl1.1} possesses
  at least one global weak solution. 

When $\Delta n$ is
replaced by $\nabla\cdot(D(n)\nabla n)$ in the first equation in \dref{ddfddfghhsdertyyuswddferrttbnmkl1.1},  some authors used some {\bf  energy-type functionals} to prove the global or local existence of the solutions to system \dref{ddfddfghhsdertyyuswddferrttbnmkl1.1}
(see \cite{Duanx41215,Francesco791,Zhangddffcvb12176} and references therein).  Indeed,
 the porous medium
diffusion function $D$ satisfies
\begin{equation}\label{ghnjmk9161gyyhuug}
D\in C^\iota_{loc} ([0,\infty))~~\mbox{for some}~~\iota>0,~~~
C_{\bar{D}}n^{m-1}\geq D(n)\geq C_Dn^{m-1}~~ \mbox{for all}~~ n>0
\end{equation}
with some $m >0$ and $C_{\bar{D}}\geq C_D$,
Zhang and Li (\cite{Zhangddffcvb12176})
 used some energy-type functionals to prove the global   existence of the weak solutions to system \dref{ddfddfghhsdertyyuswddferrttbnmkl1.1} when $m\geq\frac{2}{3}$.
Recently, if $\kappa= 0$, Tao and Winkler (\cite{Tao71215})  proved the locally bounded global of weak solution of \dref{ddfddfghhsdertyyuswddferrttbnmkl1.1} in $\mathbb{R}^3$ as $m > \frac{8}{7}$.
 These energy-type functionals play  key roles in their proofs.

Generally speaking, more recent modeling approaches (see DiLuzio  et al. \cite{DiLuzio1215}, Winkler \cite{Winklerdfvg61215,Winkler11215}, Xue et al.
\cite{Xue1215}) suggest that chemotactic migration is not directed to the gradient of the chemical substance but with a rotation, and that accordingly, the chemotactic sensitivity should be a tensor which may have nontrivial off-diagonal entries.
Motivated by the above works, we will investigate the chemotaxis-Navier-stokes system with  nonlinear diffusion and  the rotational sensitivity in this paper. Precisely, we shall consider the following initial-boundary problem
\begin{equation}
 \left\{\begin{array}{ll}
   n_t+u\cdot\nabla n=\nabla\cdot(D(n)\nabla n) -\nabla\cdot( nS(x,n,c)\cdot\nabla c),\quad
x\in \Omega, t>0,\\
    c_t+u\cdot\nabla c=\Delta c-nc,\quad
x\in \Omega, t>0,\\
u_t+\kappa(u \cdot \nabla)u+\nabla P=\Delta u+n\nabla \phi ,\quad
x\in \Omega, t>0,\\
\nabla\cdot u=0,\quad
x\in \Omega, t>0,\\
 \disp{( D(n)\nabla n-nS(x,n,c)\cdot\nabla c)\cdot\nu=\nabla c\cdot\nu=0,u=0,}\quad
x\in \partial\Omega, t>0,\\
\disp{n(x,0)=n_0(x),c(x,0)=c_0(x),u(x,0)=u_0(x),}\quad
x\in \Omega,\\
 \end{array}\right.\label{1.1}
\end{equation}
where  $\Omega \subseteq \mathbb{R}^3 $ be a bounded  convex  domain with smooth boundary, $S(x,n,c)$ is a tensor-valued function, satisfying
\begin{equation}\label{x1.73142vghf48rtgyhu}
S\in C^2(\bar{\Omega}\times[0,\infty)^2;\mathbb{R}^{3\times3})
 \end{equation}
 and
 \begin{equation}\label{x1.73142vghf48gg}|S(x, n, c)|\leq  S_0(c) ~~~~\mbox{for all}~~ (x, n, c)\in\Omega\times [0,\infty)^2
 \end{equation}
with  some nondecreasing $S_0 : [0,\infty)\rightarrow \mathbb{R},$
which indicates the rotational effect.
$D,$ $\nabla\phi,$ $\kappa$, $n(x, t)$, $u(x, t)$,$c(x, t)$ and $P(x, t)$ are denoted as before.
Due to the significance of the biological background, many mathematicians have studied \dref{1.1}
and made more progress in the past years (Ishida \cite{Ishida1215}, Zheng \cite{Zhengsdsd6,Zhengssssssssdefr23}, Wang et al. \cite{Wang11215,Wang21215}, Winkler \cite{Winkler11215}, Wang and Li \cite{Wangssderr79k1}, Cao and Lankeit \cite{Caoddf22119}).
In contrast to the chemotaxis-(Navier-)Stokes system
\dref{ddfddfghhsdertyyuswddferrttbnmkl1.1}, chemotaxis-(Navier-)Stokes system \dref{1.1} with tensor-valued sensitivity loses some natural gradient-like structure, which  gives rise to
considerable mathematical difficulties and some new analysis is needed.
Therefore, as far as I know that
only very few results appear to be available on chemotaxis-(Navier-)Stokes with such tensor-valued
sensitivities.
To this end,  if
$\kappa = 0$ in \dref{1.1} and  $D$ satisfies \dref{ghnjmk9161gyyhuug} with
$m > \frac{7}{6}$,
Winkler (\cite{Winkler11215}) showed  that the {\bf three} space dimensions of the chemotaxis--Stokes system ($\kappa = 0$ in \dref{1.1}) possessed at least one bounded weak solution
which stabilizes to the spatially homogeneous equilibrium  $(\bar{n}_0, 0, 0)$ with $\bar{n}_0:=\frac{1}{|\Omega|}\int_{\Omega}n_0$ as $t\rightarrow\infty$.
While,
if $\kappa\neq0,$
assuming that 
\dref{x1.73142vghf48rtgyhu}--\dref{x1.73142vghf48gg} hold and $D(n) = m n^{m-1}$,
 Ishida (\cite{Ishida1215}) showed  that
 \dref{1.1}
admits
a bounded global weak solution in {\bf two} space dimensions.
More recently, if $D\equiv1$, $S$ satisfies that \dref{x1.73142vghf48rtgyhu}--\dref{x1.73142vghf48gg} and  the initial data satisfied  certain {\bf smallness
conditions}, Cao and Lankeit (\cite{Caoddf22119}) proved that \dref{1.1} possessed
a global classical solution and gave the decay properties of these solutions on {\bf three space dimensions}.
However, for three space dimensions of full nonlinear chemotaxis-Navier-Stokes system  \dref{1.1} without
the assuming of {\bf smallness
conditions}, there is still a open problem.

Before formulating our main results, we first explain the notations and conventions used
throughout this paper.
Throughout this paper, let $A_{r}$ denote the Stokes operator with domain $D(A_{r}) := W^{2,{r}}(\Omega)\cap  W^{1,{r}}_0(\Omega)
\cap L^{r}_{\sigma}(\Omega)$,
and
$L^{r}_{\sigma}(\Omega) := \{\varphi\in  L^{r}(\Omega)|\nabla\cdot\varphi = 0\}$ for ${r}\in(1,\infty)$
 (\cite{Sohr}).

\begin{theorem}\label{theorem3}
Let
\begin{equation}
\phi\in W^{1,\infty}(\Omega).
\label{dd1.1fghyuisdakkkllljjjkk}
\end{equation}
Moreover, assume that the initial data
$(n_0, c_0, u_0)$ satisfy
\begin{equation}\label{ccvvx1.731426677gg}
\left\{
\begin{array}{ll}
\displaystyle{n_0\in C^\kappa(\bar{\Omega})~~\mbox{for certain}~~ \kappa > 0~~ \mbox{with}~~ n_0\geq0 ~~\mbox{in}~~\Omega},\\
\displaystyle{c_0\in W^{1,\infty}(\Omega)~~\mbox{with}~~c_0\geq0~~\mbox{in}~~\bar{\Omega},}\\
\displaystyle{u_0\in D(A^\gamma_{r})~~\mbox{for~~ some}~~\gamma\in ( \frac{3}{4}, 1)~~\mbox{and any}~~ {r}\in (1,\infty).}\\
\end{array}
\right.
\end{equation}
and suppose that $m$ and $S$ satisfy \dref{ghnjmk9161gyyhuug} and \dref{x1.73142vghf48rtgyhu}--\dref{x1.73142vghf48gg}, respectively.
 If
\begin{equation}\label{x1.73142vghf48}m>\frac{10}{9},
\end{equation}
then it holds that
there exists at least one global weak
solution (in the sense of Definition \ref{df1} above) of problem \dref{1.1}. 
\end{theorem}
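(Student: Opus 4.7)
The plan is to follow the now-standard vanishing-regularisation scheme for chemotaxis--Navier--Stokes systems with rotational sensitivity. I would first introduce a family of approximate problems indexed by $\varepsilon\in(0,1)$: replace $D(n)$ by $D_\varepsilon(n):=D(n+\varepsilon)+\varepsilon$, replace $S(x,n,c)$ by a smoothed cut-off $S_\varepsilon$ that still satisfies $|S_\varepsilon|\le S_0(c)$ pointwise and which vanishes for $x$ near $\partial\Omega$ so that the tensor flux boundary condition collapses to the homogeneous Neumann condition $\partial_\nu n_\varepsilon=0$, and replace the convective term by $(Y_\varepsilon u\cdot\nabla)u$ with the Yosida approximation $Y_\varepsilon:=(1+\varepsilon A)^{-1}$ of the Stokes operator $A$. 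Standard parabolic fixed-point theory then produces, for each $\varepsilon$, a classical global-in-time solution $(n_\varepsilon,c_\varepsilon,u_\varepsilon)$. Two estimates come for free: integration of the first equation yields $\|n_\varepsilon(\cdot,t)\|_{L^1}\equiv\|n_0\|_{L^1}$, and the maximum principle applied to the second gives $0\le c_\varepsilon\le\|c_0\|_{L^\infty}$. The latter converts the structural hypothesis on $S$ into a uniform $L^\infty$-bound $|S_\varepsilon|\le K:=S_0(\|c_0\|_{L^\infty})$, to be used throughout.

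The heart of the argument is a coupled $L^p$ estimate. Testing the first approximate equation with $n_\varepsilon^{p-1}$ and applying Young's inequality produces, for each $p>1$,
\[
\frac{1}{p}\frac{d}{dt}\int_\Omega n_\varepsilon^p+\frac{(p-1)C_D}{2}\int_\Omega n_\varepsilon^{p+m-3}|\nabla n_\varepsilon|^2\le C(p,K)\int_\Omega n_\varepsilon^{p+1-m}|\nabla c_\varepsilon|^2 .
\]
To dominate the right-hand side I would test the second equation with $-\Delta c_\varepsilon$; convexity of $\Omega$ lets us discard the boundary term $\int_{\partial\Omega}\partial_\nu|\nabla c_\varepsilon|^2\le 0$, leaving a good dissipation $\int|D^2 c_\varepsilon|^2$ and a forcing absorbable via Cauchy--Schwarz and the $L^\infty$-bound on $c_\varepsilon$. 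Rewriting the first dissipation as $\int|\nabla n_\varepsilon^{(p+m-1)/2}|^2$, using Sobolev embedding $W^{1,2}\hookrightarrow L^6$ in three dimensions together with mass conservation, and optimising the Gagliardo--Nirenberg interpolation against the combined dissipation closes the inequality precisely when $m>\tfrac{10}{9}$: this exponent is the sharp Sobolev-balance threshold at which $\int n_\varepsilon^{p+1-m}|\nabla c_\varepsilon|^2$ can be absorbed into $\int|\nabla n_\varepsilon^{(p+m-1)/2}|^2+\int|D^2 c_\varepsilon|^2$ for some $p>1$. The conclusion is a uniform bound for $n_\varepsilon$ in $L^\infty(0,T;L^p)\cap L^{p+m-1}(\Omega\times(0,T))$ for every large $p$, and hence in $L^q_{t,x}$ for some $q>\tfrac{5}{3}$.

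With $n_\varepsilon\nabla\phi$ bounded in $L^q_{t,x}$ for some $q>\tfrac{5}{3}$, the fluid estimates close along familiar lines. Testing Navier--Stokes with $u_\varepsilon$ and estimating $|\int u_\varepsilon\cdot n_\varepsilon\nabla\phi|\le\|\nabla\phi\|_{L^\infty}\|n_\varepsilon\|_{L^{6/5}}\|u_\varepsilon\|_{L^6}$ via Sobolev embedding yields $u_\varepsilon\in L^\infty(0,T;L^2_\sigma)\cap L^2(0,T;W^{1,2}_0)$, and maximal $L^q$-regularity of the Stokes operator applied to the source $-\kappa(Y_\varepsilon u_\varepsilon\cdot\nabla)u_\varepsilon+n_\varepsilon\nabla\phi$ then lifts $u_\varepsilon$ to a space in which the limiting convection $(u\cdot\nabla)u$ is integrable. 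Complementary estimates $\nabla c_\varepsilon\in L^4_{t,x}$ follow from Duhamel and heat-semigroup smoothing applied to $c_t-\Delta c=-nc-u\cdot\nabla c$, using only the $L^\infty$-bound on $c_\varepsilon$ and the already-obtained bounds on $u_\varepsilon$ and $n_\varepsilon$.

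The final step is compactness and limit passage. Aubin--Lions, applied to $(n_\varepsilon^{(p+m-1)/2},c_\varepsilon,u_\varepsilon)$ using time-derivative bounds read off from the equations, delivers almost-everywhere convergence along a subsequence $\varepsilon_j\downarrow 0$; the uniform $L^\infty$-bound on $S_\varepsilon$ together with equi-integrability of $n_\varepsilon|\nabla c_\varepsilon|$ lets the tensor flux $n_\varepsilon S_\varepsilon\cdot\nabla c_\varepsilon$ pass to the limit, and the remaining nonlinearities $n_\varepsilon^m,n_\varepsilon c_\varepsilon,u_\varepsilon n_\varepsilon,u_\varepsilon c_\varepsilon,u_\varepsilon\otimes u_\varepsilon$ pass by strong--weak pairing. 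The decisive step, and the principal obstacle, is the coupled $L^p$ estimate of the second paragraph: because the tensor-valued sensitivity $S(x,n,c)$ destroys the classical Lyapunov functional $\int(n\ln n+\tfrac12|\nabla c|^2)$ used in the gradient-flow case of \cite{Winklercvb12176}, one is forced to build a substitute from non-variational $L^p$ identities whose closure in three dimensions is governed exactly by the Sobolev-interpolation balance producing the threshold $m>\tfrac{10}{9}$.
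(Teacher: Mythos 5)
Your overall scaffolding (regularisation with the Yosida approximation and a cut-off $S_\varepsilon$, mass conservation, the $L^\infty$ bound on $c_\varepsilon$, Aubin--Lions compactness and limit passage) matches the paper, but the decisive step is not carried out, and it is exactly the step you yourself identify as ``the principal obstacle''. You dismiss the quasi-energy functional as destroyed by the rotation and propose instead to close a coupled $L^p$ estimate for $n_\varepsilon$ against a $-\Delta c_\varepsilon$ test of the second equation, asserting without any computation that Gagliardo--Nirenberg balancing ``closes the inequality precisely when $m>\tfrac{10}{9}$''. This is unsubstantiated and very unlikely at that threshold: the coupled $\int n_\varepsilon^{p}+\int|\nabla c_\varepsilon|^{2q}$ machinery is precisely what yields $m>\tfrac{7}{6}$ in Winkler's treatment of the rotational chemotaxis--Stokes system \cite{Winkler11215} and $m>\tfrac98$ in \cite{Zhengssssssssdefr23}, and nothing in your sketch explains how the same interpolation suddenly reaches $\tfrac{10}{9}$. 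Moreover, if your estimate really produced $\varepsilon$-uniform $L^\infty(0,T;L^p(\Omega))$ bounds on $n_\varepsilon$ for every large $p$, you would essentially be proving boundedness of solutions, which is stronger than what the theorem claims; in the paper the large-$p$ bounds of Section 3 are $\varepsilon$-dependent (they exploit the factor $\varepsilon^{-1}$ hidden in $F_\varepsilon$) and serve only to rule out finite-time blow-up of the approximate solutions.

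The paper's actual route is the opposite of what you assume is unavailable: it shows in Lemmata \ref{lemma630jklhhjj}--\ref{lemmakkllgg4563025xxhjklojjkkk} that the functional $\int_\Omega n_\varepsilon\ln n_\varepsilon+\int_\Omega|\nabla\sqrt{c_\varepsilon}|^2+\int_\Omega|u_\varepsilon|^2$ (for $\tfrac{10}{9}<m\le 2$; for $m>2$ one uses $\int_\Omega(n_\varepsilon+\varepsilon)^{m-1}+\int_\Omega c_\varepsilon^2+\int_\Omega|u_\varepsilon|^2$, a case your proposal does not treat at all) still yields a closed differential inequality in the rotational setting, because no exact cancellation is needed: the chemotaxis contribution is bounded by $S_0(\|c_0\|_{L^\infty(\Omega)})\int_\Omega|\nabla n_\varepsilon||\nabla c_\varepsilon|$ and absorbed by Young's inequality into the dissipation terms $\int_\Omega c_\varepsilon|D^2\ln c_\varepsilon|^2$, $\int_\Omega|\nabla c_\varepsilon|^4/c_\varepsilon^3$ and $\int_\Omega D_\varepsilon(n_\varepsilon)|\nabla n_\varepsilon|^2/n_\varepsilon$. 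The threshold $m>\tfrac{10}{9}$ enters at one specific point that your argument never reaches: the cross term $-2\int_\Omega\nabla n_\varepsilon\cdot\nabla c_\varepsilon$ in the evolution of $\int_\Omega|\nabla c_\varepsilon|^2/c_\varepsilon$ produces $\int_\Omega n_\varepsilon^{4-2m}$, and the Gagliardo--Nirenberg inequality, using only the conserved $L^1$ norm of $n_\varepsilon$, controls this by $\|\nabla n_\varepsilon^{m/2}\|_{L^2(\Omega)}^{6(3-2m)/(3m-1)}$, whose exponent drops below $2$ exactly when $m>\tfrac{10}{9}$. Without this (or an equivalent) computation the core of the proof is missing.
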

\begin{remark}
(i) If $ S(x,n,c):=C_S$ and $\kappa=0,$  Theorem \ref{theorem3}   extends the results of Theorem 1.1 of Tao and Winkler \cite{Tao71215}, who proved the possibility of $\mathbf{locally~~ bounded ~~global~~ solutions}$,
in the case that $m> \frac{8}{7}$.

(ii) If  $\kappa=0,$  Theorem \ref{theorem3}   extends the results of Theorem 1.1 of Zheng \cite{Zhengssssssssdefr23}, who proved the possibility of $\mathbf{locally~~ bounded ~~global~~ solutions}$,
in the case that $m> \frac{9}{8}$.



(iii) In view of Theorem \ref{theorem3}, if the flow of fluid is ignored or the fluid is
stationary in \dref{1.1}, $ S(x,n,c):=C_S,$ and $N=3$,  Theorem \ref{theorem3}
is consistent with the result of Theorem 2.1 of  Zheng and Wang(\cite{Zhengddffsdsd6}),
who proved the possibility of $\mathbf{global~~ existence}$,
in the case that $m> \frac{9}{8}$.

(iv) If $m>\frac{10}{9}$,  Theorem \ref{theorem3} is hold without
requirement of 
 the small initial data (see Cao and Lankeit \cite{Caoddf22119}).

\end{remark}

Before proving our main results about the model  \dref{1.1} in the next part, let us mention the following Keller-Segel-(Navier)-Stokes model (accounting for terms $+n-c$ in place of $-nc$ in the second equation of \dref{1.1}), which is a closely related variant of \dref{1.1}
 \begin{equation}
 \left\{\begin{array}{ll}
   n_t+u\cdot\nabla n=\nabla\cdot(  D(n)\nabla n)-\nabla\cdot(nS(x,n,c)\nabla c),\quad
x\in \Omega, t>0,\\
    c_t+u\cdot\nabla c=\Delta c-c+n,\quad
x\in \Omega, t>0,\\
u_t+\kappa (u\cdot\nabla)u+\nabla P=\Delta u+n\nabla \phi,\quad
x\in \Omega, t>0,\\
\nabla\cdot u=0,\quad
x\in \Omega, t>0.\\
 \end{array}\right.\label{1ssxdcfddfrgtvgb.1}
\end{equation}
In contrast to \dref{1.1}, in the classical Keller-Segel system the chemoattractant is produced by the bacteria
themselves and not consumed,
and models of Keller-Segel-
(Navier)-Stokes type have also been considered (see Wang and Xiang \cite{Wang21215,Wangss21215}, Liu and Wang \cite{Liu334231215} Zheng \cite{Zheddfffggngddffsdsd6}).


The crucial step of our approaches  is to establish 
%
%
 the  natural gradient-like energy functional
%
%
\begin{equation}
 \left\{\begin{array}{ll}
 \disp\int_{\Omega}n_{\varepsilon}(\cdot,t)\ln n_{\varepsilon}(\cdot,t)+\int_{\Omega}|\nabla\sqrt{c_{\varepsilon}}(\cdot,t)|^2+\int_{\Omega}|u_{\varepsilon}(\cdot,t)|^2,~~\mbox{if}~~\frac{10}{9}< m\leq2,\\
 \disp\int_{\Omega}\left[n_{\varepsilon}(\cdot,t)^{m-1}(\cdot,t)+   c_{\varepsilon}^2(\cdot,t)+ | {u_{\varepsilon}}(\cdot,t)|^2\right],~~\mbox{if}~~m>2,\\
   \end{array}\right.\label{ghbbnnnnmmnjjffff1.1hhjjddssggtyy}
\end{equation}
 which is a
  new estimate of chemotaxis--Navier--stokes system   {\bf with rotation} (see Lemmata \ref{lemma630jklhhjj}--\ref{lemmaghjssddgghhmk4563025xxhjklojjkkk}),
   although, the part of \dref{ghbbnnnnmmnjjffff1.1hhjjddssggtyy} ($m<2$) has been used to solve the chemotaxis-(Navier)-Stokes system  {\bf without rotation} (see \cite{Bellomo1216,Lankeitffg11,Winklercvb12176}).
 Here $(n_\varepsilon,c_{\varepsilon},u_{\varepsilon})$ is the solution of the suitable approximate problem of \dref{1.1}. We guess that \dref{ghbbnnnnmmnjjffff1.1hhjjddssggtyy} can also be dealt with other types of systems,  e.g., Keller-Segel-Navier-Stokes system with nonlinear diffusion (see our recent paper \cite{Zheddfffggngddffsdsd6}).
 Then, in view of the estimates \dref{ghbbnnnnmmnjjffff1.1hhjjddssggtyy}, the suitable
interpolation arguments (see Lemma  \ref{drfe116lemma70hhjj}) and the basic a priori information  (see Lemma \ref{ghjssdeedrfe116lemma70hhjj}), one  can obtain boundedness of
\begin{equation}\|{n}_\varepsilon(\cdot, t)\|_{L^{p_0}(\Omega)}\leq C~~ \mbox{for all}~~ t\in(0, T_{max,\varepsilon}) ~~~\mbox{with}~~p_0>3
 \label{ssdffddfz2.571cghvvgjjllssdffll}
\end{equation}
and $C:=C(\varepsilon)$ depends on $\varepsilon$ (see Lemmata \ref{lemffggma456hjojjkkyhuissddff}--\ref{lemffggma456hjkhddff}).
With estimate of  \dref{ssdffddfz2.571cghvvgjjllssdffll} at hand, by using  variation-of-constants,  smoothing properties of the
Stokes semigroup and  Moser-type iteration,  we can show that our approximate solutions $(n_\varepsilon,c_{\varepsilon},u_{\varepsilon})$ are actually global
in time. Finally, by the interpolation inequality, we derive a priori estimates for the approximate solutions $(n_\varepsilon,c_{\varepsilon},u_{\varepsilon})$
to the approximate problems of problem \dref{1.1} and  complete the proof
of main results by an approximation procedure.

The rest of this paper is organized as follows. In the following section, we state our main results,  introduce the regularized system of \dref{1.1} and
collect  some basic estimates which will be useful for proofs later on. In Section 3, we derive  a series of useful estimates which depend on $\varepsilon$ and
then obtain the global existence of the regularized problems. In Section 4, in light of   the Gagliardo-Nirenberg inequality and the other some basic analysis,
we derive  some  $\varepsilon$-independent boundedness  of the time derivatives of certain powers of $n_{\varepsilon},c_{\varepsilon}$ and $u_{\varepsilon}$.
 In the final step, it is proved that \dref{1.1} possesses at least one weak  solution by the  Aubin--Lions lemma, the standard parabolic regularity theory and  the Egorov theorem.
\section{Preliminaries and  main results}

Due to hypothesis \dref{ghnjmk9161gyyhuug}, $\kappa\neq0$ and the presence of tensor-valued $S$ in system \dref{1.1}, we need to consider an appropriately regularized problem of \dref{1.1} at first. Indeed, following  the idea of  \cite{Winkler51215} (see also \cite{Tao71215,Zhangddffcvb12176}),
the corresponding regularized problem is introduced as follows:
\begin{equation}
 \left\{\begin{array}{ll}
   n_{\varepsilon t}+u_{\varepsilon}\cdot\nabla n_{\varepsilon}=\nabla\cdot(D_{\varepsilon}(n_{\varepsilon})\nabla n_{\varepsilon})-\nabla\cdot(n_{\varepsilon}F_{\varepsilon}(n_{\varepsilon})S_\varepsilon(x, n_{\varepsilon}, c_{\varepsilon})\cdot\nabla c_{\varepsilon}),\quad
x\in \Omega, t>0,\\
    c_{\varepsilon t}+u_{\varepsilon}\cdot\nabla c_{\varepsilon}=\Delta c_{\varepsilon}-n_{\varepsilon}c_{\varepsilon},\quad
x\in \Omega, t>0,\\
u_{\varepsilon t}+\nabla P_{\varepsilon}=\Delta u_{\varepsilon}-\kappa (Y_{\varepsilon}u_{\varepsilon} \cdot \nabla)u_{\varepsilon}+n_{\varepsilon}\nabla \phi ,\quad
x\in \Omega, t>0,\\
\nabla\cdot u_{\varepsilon}=0,\quad
x\in \Omega, t>0,\\
 \disp{\nabla n_{\varepsilon}\cdot\nu=\nabla c_{\varepsilon}\cdot\nu=0,u_{\varepsilon}=0,\quad
x\in \partial\Omega, t>0,}\\
\disp{n_{\varepsilon}(x,0)=n_0(x),c_{\varepsilon}(x,0)=c_0(x),u_{\varepsilon}(x,0)=u_0(x)},\quad
x\in \Omega,\\
 \end{array}\right.\label{1.1fghyuisda}
\end{equation}
where
\begin{equation}
 \begin{array}{ll}
 Y_{\varepsilon}w := (1 + \varepsilon A)^{-1}w ~~~~\mbox{for all}~~ w\in L^2_{\sigma}(\Omega)
 \end{array}\label{aasddffgg1.1fghyuisda}
\end{equation}
is the standard Yosida approximation,
%
%
$D_\varepsilon(s):=D_\varepsilon(s+\varepsilon)$, 
  \begin{equation}
 \begin{array}{ll}
S_\varepsilon(x, n, c) := \rho_\varepsilon(x)S(x, n, c),~~ x\in\bar{\Omega},~~n\geq0,~~c\geq0 ~~\mbox{and}~~\varepsilon\in(0, 1)
 \end{array}\label{3.10gghhjuuloollyuigghhhyy}
\end{equation}
and
\begin{equation}
F_{\varepsilon}(s)=\frac{1}{1+\varepsilon s}~~\mbox{for}~~s \geq 0.
\label{1.ffggvbbnxxccvvn1}
\end{equation}
Here $(\rho_\varepsilon)_{\varepsilon\in(0,1)} \in C^\infty_0 (\Omega)$
  be a family of standard cut-off functions satisfying $0\leq\rho_\varepsilon\leq 1$
   in $\Omega$
 and $\rho_\varepsilon\rightarrow1$ in $\Omega$
 as $\varepsilon\rightarrow0$.

 With the help of the Schauder fixed point theorem, the standard regularity theory of parabolic equations and the Stokes system,
in light of a
straightforward adaptation of a corresponding procedure in Lemma 2.1 of \cite{Winklercvb12176} to
the present setting (see also  Lemma 2.1 of \cite{Winkler11215}), we can easily  obtain following local
existence result of \dref{1.1fghyuisda}:
%
%
%
%
%
\begin{lemma}\label{lemma70}
Let $\Omega \subseteq \mathbb{R}^3 $ be a bounded  convex  domain with smooth boundary.
Assume
that
%
the initial data $(n_0,c_0,u_0)$ fulfills \dref{ccvvx1.731426677gg}.
Then there exist $T_{max,\varepsilon}\in  (0,\infty]$ and
a classical solution $(n_{\varepsilon}, c_{\varepsilon}, u_{\varepsilon}, P_{\varepsilon})$ of \dref{1.1fghyuisda} in
$\Omega\times(0, T_{max,\varepsilon})$ such that
\begin{equation}
 \left\{\begin{array}{ll}
 n_{\varepsilon}\in C^0(\bar{\Omega}\times[0,T_{max,\varepsilon}))\cap C^{2,1}(\bar{\Omega}\times(0,T_{max,\varepsilon})),\\
  c_{\varepsilon}{}\in  C^0(\bar{\Omega}\times[0,T_{max,\varepsilon}))\cap C^{2,1}(\bar{\Omega}\times(0,T_{max,\varepsilon})),\\
  u_{\varepsilon}{}\in  C^0(\bar{\Omega}\times[0,T_{max,\varepsilon}))\cap C^{2,1}(\bar{\Omega}\times(0,T_{max,\varepsilon})),\\
  P_{\varepsilon}{}\in  C^{1,0}(\bar{\Omega}\times(0,T_{max,\varepsilon})),\\
   \end{array}\right.\label{1.1ddfghyuisda}
\end{equation}
 classically solving \dref{1.1fghyuisda} in $\Omega\times[0,T_{max,\varepsilon})$.
%
Moreover, it holds that $n_\varepsilon$ and $c_\varepsilon$ are nonnegative in
$\Omega\times(0, T_{max,\varepsilon})$, and
\begin{equation}
\limsup_{t\nearrow T_{max,\varepsilon}}(\|n_\varepsilon(\cdot, t)\|_{L^\infty(\Omega)}+\|c_\varepsilon(\cdot, t)\|_{W^{1,\infty}(\Omega)}+\|A^\gamma u_\varepsilon(\cdot, t)\|_{L^{2}(\Omega)})=\infty,
\label{1.163072x}
\end{equation}
where $\gamma$ is given by \dref{ccvvx1.731426677gg}.
\end{lemma}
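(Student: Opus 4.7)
The plan is to construct $(n_\varepsilon,c_\varepsilon,u_\varepsilon)$ on a short time interval by Schauder's fixed point theorem and then extend by the usual continuation argument. The key observation is that the regularizations in \dref{1.1fghyuisda} turn the genuinely nonlinear couplings of \dref{1.1} into coefficients that are uniformly bounded in terms of the initial data: by \dref{x1.73142vghf48gg} and the compact support of $\rho_\varepsilon$ the tensor $S_\varepsilon$ is bounded on $\bar\Omega\times[0,\infty)^2$, $F_\varepsilon(n)\le 1/\varepsilon$ in fact $F_\varepsilon\in C^\infty$ is bounded by $1$, $D_\varepsilon$ is non-degenerate (since $D_\varepsilon(s)=D(s+\varepsilon)$ is bounded from below by $C_D\varepsilon^{m-1}$ on $[0,\infty)$), and $Y_\varepsilon:L^2_\sigma(\Omega)\to D(A_2)$ is bounded with norm depending only on $\varepsilon$, so $(Y_\varepsilon u_\varepsilon\cdot\nabla)u_\varepsilon$ becomes a tame lower-order perturbation in the Stokes equation.

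Concretely, fix $T>0$ and $M>0$ to be chosen small respectively large, and on the closed convex set
\begin{equation*}
X_{T,M}:=\bigl\{(\tilde n,\tilde c,\tilde u)\in C^0(\bar\Omega\times[0,T];\mathbb{R}^5):\;\|\tilde n\|_{L^\infty}+\|\tilde c\|_{W^{1,\infty}}+\|A^{\gamma}\tilde u\|_{L^2}\le M,\;(\tilde n,\tilde c,\tilde u)|_{t=0}=(n_0,c_0,u_0)\bigr\}
\end{equation*}
define $\Phi:(\tilde n,\tilde c,\tilde u)\mapsto(n_\varepsilon,c_\varepsilon,u_\varepsilon)$ by solving three decoupled linear problems in succession: first the Stokes equation for $u_\varepsilon$ with forcing $\tilde n\nabla\phi-\kappa(Y_\varepsilon\tilde u\cdot\nabla)\tilde u$, handled by the analytic Stokes semigroup on $D(A^\gamma_r)$ as in \cite{Sohr}; then the linear parabolic equation $c_{\varepsilon t}+\tilde u\cdot\nabla c_\varepsilon=\Delta c_\varepsilon-\tilde n c_\varepsilon$ with Neumann data, handled by the Neumann heat semigroup; and finally the linear parabolic equation for $n_\varepsilon$ with uniformly elliptic principal part $\nabla\!\cdot(D_\varepsilon(\tilde n)\nabla n_\varepsilon)$ and bounded cross-term $\nabla\!\cdot(n_\varepsilon F_\varepsilon(\tilde n)S_\varepsilon(x,\tilde n,\tilde c)\cdot\nabla\tilde c)$. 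Standard smoothing estimates, together with the continuous embedding $D(A^\gamma_r)\hookrightarrow L^\infty$ for $r$ large and $\gamma>3/4$, show that $\Phi(X_{T,M})\subset X_{T,M}$ for $T=T(\varepsilon,M)$ sufficiently small, and parabolic regularity upgrades $\Phi$ to a compact self-map; Schauder's theorem then yields a fixed point. Non-negativity of $n_\varepsilon,c_\varepsilon$ follows from the weak maximum principle applied to the $c$- and $n$-equations, using the no-flux boundary conditions, the sign of the initial data, and the absorptive structure of the coupling. Bootstrapping via Schauder's interior and boundary estimates raises the regularity of $(n_\varepsilon,c_\varepsilon,u_\varepsilon,P_\varepsilon)$ to the class stated in \dref{1.1ddfghyuisda}.

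For the extensibility alternative \dref{1.163072x} I would argue by contradiction in the standard way: assume $T_{\max,\varepsilon}<\infty$ and that the three norms in \dref{1.163072x} stay bounded as $t\nearrow T_{\max,\varepsilon}$. Then picking $\tau$ close enough to $T_{\max,\varepsilon}$ and restarting the fixed point construction at time $\tau$ yields a solution on $[\tau,\tau+\delta]$ with $\delta=\delta(\varepsilon,M)>0$ depending only on the controlled quantity at $\tau$; gluing contradicts the maximality of $T_{\max,\varepsilon}$. The main technical point I expect to have to be careful about is making the fixed point space compatible with the Hölder/Schauder theory needed for classical regularity of the $n$-equation: the cross-diffusion coefficient $n_\varepsilon F_\varepsilon(n_\varepsilon)S_\varepsilon(\cdot,n_\varepsilon,c_\varepsilon)\nabla c_\varepsilon$ must be at least Hölder continuous in $(x,t)$ to invoke Ladyzhenskaya–Solonnikov–Ural'tseva, and this is ensured by the $C^\infty$ regularity of $F_\varepsilon$ and $\rho_\varepsilon$, the $C^2$-assumption \dref{x1.73142vghf48rtgyhu} on $S$, the initial $C^\kappa$-regularity of $n_0$, and a standard iterative upgrade of the Hölder exponents of $(n_\varepsilon,c_\varepsilon,u_\varepsilon)$ once a merely continuous solution has been produced. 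Aside from this bookkeeping, every step is routine and the assertion is essentially \cite[Lemma~2.1]{Winklercvb12176} combined with the Yosida-approximation tricks from \cite{Winkler11215,Tao71215,Zhangddffcvb12176}.
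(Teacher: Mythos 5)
Your proposal is correct and follows essentially the same route as the paper, which does not write out a proof but simply invokes the Schauder fixed point theorem, standard parabolic and Stokes regularity theory, and a straightforward adaptation of Lemma~2.1 of \cite{Winklercvb12176} (see also Lemma~2.1 of \cite{Winkler11215}). Your fixed-point construction, the use of the regularizations $F_\varepsilon$, $S_\varepsilon$, $D_\varepsilon$, $Y_\varepsilon$ to tame the nonlinearities, and the standard continuation argument for \dref{1.163072x} are exactly the intended content of that citation.
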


\begin{lemma}(\cite{Lankeitffg11})\label{lemmffffgga630jklhhjj}
 Let $w\in C^2(\bar{\Omega})$
  satisfy $\nabla w\cdot\nu  = 0$ on $\partial\Omega$.

    (i) Then
$$\frac{\partial|\nabla w|^2}{\partial\nu} \leq C_{\partial\Omega}|\nabla w|^2,$$
where $C_{\partial\Omega}$ is an upper bound on the curvature of $\partial\Omega$.

(ii) Furthermore, for any $\delta > 0$ there is $C(\delta) > 0$ such that every $w\in C^2(\bar{\Omega})$ with $\nabla w\cdot\nu  = 0$ on $\partial\Omega$ fulfils
$$\|w\|_{L^2(\partial\Omega)}\leq \delta \|\Delta w\|_{L^2(\Omega)} + C(\delta) \|w\|_{L^2(\Omega)} .$$

(iii) For any positive $w\in C^2(\bar{\Omega})$
 \begin{equation}\|\Delta w^{\frac{1}{2}}\|_{L^2(\Omega)}\leq \frac{1}{2} \|w^{\frac{1}{2}}\Delta \ln w\|_{L^2(\Omega)} +
 \frac{1}{4}  \|w^{-\frac{3}{2}}|\nabla w|^2\|_{L^2(\Omega)}.
\label{3.10gghhjuulofffollgghhhyhh}
\end{equation}

(iv) There are $C > 0$ and $\delta > 0$ such that every positive $w\in C^2(\bar{\Omega})$ fulfilling $\nabla w\cdot\nu  = 0$ on $\partial\Omega$ satisfies
\begin{equation}
 \begin{array}{rl}
  &\disp{-2\int_{\Omega}\frac{|\Delta  w|^2 }{ w}+\int_{\Omega}\frac{|\nabla  w|^2\Delta  w }{ w^2}\leq -\delta\int_{\Omega} w|D^2\ln w|^2-\delta\int_{\Omega}\frac{|\nabla  w|^4}{ w^3}+C\int_{\Omega} w.}\\
\end{array}\label{vcbbbbcvvgbhsvvbbsddacvvvvbbqwswddaassffssff3.10deerfgghhjuuloollgghhhyhh}
\end{equation}
\end{lemma}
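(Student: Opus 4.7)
The plan is to treat the four parts in order, since each builds on standard pointwise or integration-by-parts identities and only (iv) requires serious work.

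For part (i), I would work at an arbitrary boundary point and note that $\partial_\nu|\nabla w|^2=2\,D^2w(\nabla w,\nu)$. Because $\nabla w\cdot\nu=0$ on $\partial\Omega$, the vector $\nabla w$ is tangential, so I can differentiate the identity $\nabla w\cdot\nu=0$ in an arbitrary tangential direction $T$ to get $D^2w(T,\nu)=-\nabla w\cdot(D_T\nu)=-\mathrm{II}(\nabla w,T)$, where $\mathrm{II}$ is the second fundamental form of $\partial\Omega$. Choosing $T=\nabla w$ gives $\partial_\nu|\nabla w|^2=-2\,\mathrm{II}(\nabla w,\nabla w)$, and the upper bound $C_{\partial\Omega}|\nabla w|^2$ follows from the uniform curvature bound on $\partial\Omega$ (the convexity hypothesis eventually lets one discard the boundary term, but for (i) itself a mere curvature bound suffices).

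For part (ii), I would use the standard trace inequality $\|w\|_{L^2(\partial\Omega)}^2\le C\|w\|_{H^1(\Omega)}\|w\|_{L^2(\Omega)}$ together with the Neumann identity $\int_\Omega|\nabla w|^2=-\int_\Omega w\,\Delta w\le \tfrac12\|\Delta w\|_{L^2}^2+\tfrac12\|w\|_{L^2}^2$; combining these with Young's inequality with an $\varepsilon$-parameter absorbs the $\|\Delta w\|_{L^2}$ contribution into $\delta\|\Delta w\|_{L^2}$ at the cost of enlarging the coefficient of $\|w\|_{L^2}$. Part (iii) is a direct algebraic identity: the computation $\Delta w^{1/2}=\tfrac12 w^{-1/2}\Delta w-\tfrac14 w^{-3/2}|\nabla w|^2$ combined with $\Delta\ln w=\Delta w/w-|\nabla w|^2/w^2$ gives $\Delta w^{1/2}=\tfrac12 w^{1/2}\Delta\ln w+\tfrac14 w^{-3/2}|\nabla w|^2$ pointwise, after which the triangle inequality in $L^2(\Omega)$ yields the claim.

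The main obstacle is part (iv). Here I would proceed as follows. First, write the pointwise expansion $w|D^2\ln w|^2=\tfrac{|D^2w|^2}{w}-\tfrac{2}{w^2}\langle D^2w\,\nabla w,\nabla w\rangle+\tfrac{|\nabla w|^4}{w^3}$, which follows from $\partial_{ij}\ln w=\partial_{ij}w/w-\partial_iw\,\partial_jw/w^2$. Next, I would rewrite the targeted integrand by repeated integration by parts, using in particular
\begin{equation*}
\int_\Omega\frac{|\nabla w|^2\,\Delta w}{w^2}=-\int_\Omega\nabla\!\left(\tfrac{|\nabla w|^2}{w^2}\right)\!\cdot\nabla w+\int_{\partial\Omega}\frac{|\nabla w|^2}{w^2}\partial_\nu w,
\end{equation*}
which kills the boundary term via $\partial_\nu w=0$, and the ``Bochner-type'' identity $\tfrac12\Delta|\nabla w|^2=|D^2w|^2+\nabla w\cdot\nabla\Delta w$, which after a further integration by parts produces both $\int|D^2w|^2/w$ and a boundary contribution $\tfrac{1}{w}\,\partial_\nu|\nabla w|^2$. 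At this stage I would invoke part (i) together with convexity of $\Omega$ (so $C_{\partial\Omega}$ enters with a favorable sign and can be absorbed, or at worst via part (ii) applied to $\log w$ or $|\nabla w|$) to control the boundary term by a small multiple of the interior Hessian integral plus $C\int_\Omega w$. Combining the resulting interior identities and matching coefficients against the pointwise formula for $w|D^2\ln w|^2$ gives the stated negative definite bound with some explicit $\delta>0$. The hard part is carrying through this bookkeeping while keeping both the $w|D^2\ln w|^2$ and the $|\nabla w|^4/w^3$ contributions genuinely negative, and this is exactly where the convexity of $\Omega$ is essential.
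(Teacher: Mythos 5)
This lemma is not proved in the paper at all: it is quoted from \cite{Lankeitffg11}, so the only benchmark is the argument given there (which in turn rests on Lemma 3.2--3.3 of \cite{Winklercvb12176}). Your parts (i)--(iii) are correct and coincide with the standard proofs: (i) by differentiating $\nabla w\cdot\nu=0$ tangentially and recognizing the second fundamental form, (ii) by trace interpolation combined with $\int_\Omega|\nabla w|^2=-\int_\Omega w\,\Delta w$ and Young's inequality, (iii) from the pointwise identity $\Delta w^{1/2}=\tfrac12 w^{1/2}\Delta\ln w+\tfrac14 w^{-3/2}|\nabla w|^2$.

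The gap is in (iv), exactly at the step you call ``matching coefficients''. If you carry out the two integrations by parts you propose (Bochner identity plus the integration by parts of $\int_\Omega |\nabla w|^2\Delta w/w^2$), the interior terms recombine \emph{exactly} and you obtain the identity
\[
-2\int_{\Omega}\frac{|\Delta w|^2}{w}+\int_{\Omega}\frac{|\nabla w|^2\Delta w}{w^2}
=-2\int_{\Omega}w\,|D^2\ln w|^2+\int_{\partial\Omega}\frac{1}{w}\,\frac{\partial|\nabla w|^2}{\partial\nu},
\]
with no leftover negative multiple of $\int_\Omega|\nabla w|^4/w^3$. The second negative term in (iv) therefore cannot be produced by bookkeeping alone: one must additionally invoke the non-pointwise inequality $\int_\Omega|\nabla w|^4/w^3\le K\int_\Omega w|D^2\ln w|^2$, valid for positive $w\in C^2(\bar\Omega)$ with $\nabla w\cdot\nu=0$ (Lemma 3.3 of \cite{Winklercvb12176}, with $K=(2+\sqrt 3)^2$), which has its own integration-by-parts proof. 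That this extra input is unavoidable is seen from $w=e^{a\cdot x}$: there $D^2\ln w\equiv 0$ while $|\nabla w|^4/w^3=|a|^4w>0$, so no pointwise domination of $|\nabla w|^4/w^3$ by $w|D^2\ln w|^2$ exists, and only the Neumann boundary condition rescues the integral version. A secondary inaccuracy: convexity is not what the quoted lemma relies on. For a general smooth domain the boundary integral is controlled by (i) together with a trace interpolation in the spirit of (ii), and this is precisely where the harmless remainder $C\int_\Omega w$ in (iv) originates; in the convex setting of the present paper the boundary term is simply nonpositive by (i) and may be dropped.
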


\begin{lemma}\label{drfe116lemma70hhjj}(Lemma 3.8 of \cite{Winkler11215})
Let $q\geq1$,  \begin{equation}\lambda\in[2q+2,4q+1]
\label{3.10deerfgghhjuuloollgghhhyhh}
\end{equation}
and $\Omega\subset \mathbb{R}^3$ be a bounded  convex  domain with smooth boundary.
Then there exists $C > 0$ such that for all $\varphi\in C^2(\bar{\Omega})$ fulfilling $\varphi\cdot\frac{\partial\varphi}{\partial\nu}= 0$
 on $\partial\Omega$
 we have
 \begin{equation}
 \begin{array}{rl}
 &\|\nabla\varphi\|_{L^\lambda(\Omega)}\leq C\||\nabla\varphi|^{q-1}D^2\varphi\|_{L^2(\Omega)}^{\frac{2(\lambda-3)}{(2q-1)\lambda}}
 \|\varphi\|_{L^\infty(\Omega)}^{\frac{6q-\lambda}{(2q-1)\lambda}}+C\|\varphi\|_{L^\infty(\Omega)}.\\
\end{array}\label{aqwswddaassffssff3.10deerfgghhjuuloollgghhhyhh}
\end{equation}
\end{lemma}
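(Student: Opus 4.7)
The idea is to view the stated bound as a Gagliardo--Nirenberg-type inequality adapted to the nonlinear gradient $\Phi := |\nabla\varphi|^q$, whose $L^2$-gradient is controlled by the right-hand side of the lemma: from the pointwise identity $|\nabla\Phi| \leq q\,|\nabla\varphi|^{q-1}|D^2\varphi|$ one has $\|\nabla\Phi\|_{L^2(\Omega)} \leq q\,\||\nabla\varphi|^{q-1}D^2\varphi\|_{L^2(\Omega)}$. Since $\|\nabla\varphi\|_{L^\lambda}^q = \|\Phi\|_{L^{\lambda/q}}$ and the 3D Sobolev embedding $W^{1,2}(\Omega)\hookrightarrow L^{\lambda/q}(\Omega)$ is available precisely when $\lambda/q\in[2,6]$, the stated range $\lambda\in[2q+2,4q+1]$, which lies strictly inside $[2q,6q]$ for $q\geq 1$, is exactly what allows a $W^{1,2}$-type interpolation of $\Phi$ to close.

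The first step is an integration-by-parts estimate at the lower endpoint $\lambda=2q+2$. Writing
$$
\int_\Omega |\nabla\varphi|^{2q+2} \,=\, \int_\Omega \nabla\varphi\cdot\nabla\varphi\,|\nabla\varphi|^{2q}
$$
and moving one derivative off one factor of $\nabla\varphi$, the boundary integral produced carries the factor $\varphi\,\partial_\nu\varphi$ and vanishes by the hypothesis $\varphi\,\partial_\nu\varphi=0$ on $\partial\Omega$. The remaining volume term is bounded by $\|\varphi\|_{L^\infty}\int_\Omega |D^2\varphi|\,|\nabla\varphi|^{2q}$; splitting $|\nabla\varphi|^{2q}$ as $|\nabla\varphi|^{q-1}\cdot|\nabla\varphi|^{q+1}$ and applying Cauchy--Schwarz yields
$$
\|\nabla\varphi\|_{L^{2q+2}}^{q+1} \leq C\|\varphi\|_{L^\infty}\,\||\nabla\varphi|^{q-1}D^2\varphi\|_{L^2},
$$
which matches the claim at $\lambda=2q+2$ (both stated exponents collapse to $1/(q+1)$ there).

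The second step is to apply a Gagliardo--Nirenberg inequality in $\mathbb{R}^3$ to $\Phi$:
$$
\|\Phi\|_{L^{\lambda/q}} \leq C\|\nabla\Phi\|_{L^2}^{\theta}\|\Phi\|_{L^{s}}^{1-\theta} + C\|\Phi\|_{L^{s}},
$$
with $s\geq 1$ and $\theta\in[0,1]$ fixed by the scaling identity $\frac{q}{\lambda}=\theta(\tfrac12-\tfrac13)+\frac{1-\theta}{s}$. One then substitutes $\|\nabla\Phi\|_{L^2}\leq q\,\||\nabla\varphi|^{q-1}D^2\varphi\|_{L^2}$ together with a companion bound on $\|\Phi\|_{L^s}$, obtained by running the integration-by-parts argument above at a lower exponent, and chooses $s$ so that the product reproduces exactly the powers $\frac{2(\lambda-3)}{(2q-1)\lambda}$ and $\frac{6q-\lambda}{(2q-1)\lambda}$ of the lemma. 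The additive $C\|\varphi\|_{L^\infty}$ term absorbs the lower-order contribution $C\|\Phi\|_{L^s}$ via Young's inequality.

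The expected main obstacle is the exponent bookkeeping in this last step: the factor $2q-1$ in the denominators of the stated powers is the fingerprint of a specific Hölder splitting that merges the integration-by-parts bound on $\|\Phi\|_{L^s}$ with the Gagliardo--Nirenberg interpolation, and the admissible range $\lambda\in[2q+2,4q+1]$ is forced by two competing requirements: the lower endpoint is the largest $\lambda$ for which the direct integration-by-parts estimate closes without the Hölder step overshooting $\|\nabla\varphi\|_{L^\lambda}$ on the right-hand side, while the upper endpoint $\lambda\leq 4q+1$ is precisely what keeps the Gagliardo--Nirenberg exponent $\theta$ in $[0,1]$ (equivalently, keeps $\lambda/q$ below the critical Sobolev exponent $6$).
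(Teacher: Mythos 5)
Your proposal is correct, and it is worth noting that the paper itself offers no proof of this statement at all: Lemma \ref{drfe116lemma70hhjj} is simply quoted as Lemma 3.8 of \cite{Winkler11215}. What you have written is in substance a faithful reconstruction of the argument in that reference: the endpoint estimate $\|\nabla\varphi\|_{L^{2q+2}(\Omega)}^{q+1}\leq C\|\varphi\|_{L^\infty(\Omega)}\,\||\nabla\varphi|^{q-1}D^2\varphi\|_{L^2(\Omega)}$ obtained by integrating by parts in $\int_\Omega\nabla\varphi\cdot\nabla\varphi\,|\nabla\varphi|^{2q}$ (with the boundary term killed exactly by $\varphi\,\partial_\nu\varphi=0$, so that convexity of $\Omega$ is not even needed here), followed by the Gagliardo--Nirenberg inequality for $\Phi=|\nabla\varphi|^q$ with $\|\nabla\Phi\|_{L^2}\leq q\||\nabla\varphi|^{q-1}D^2\varphi\|_{L^2}$ and $s=\frac{2q+2}{q}$. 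The exponent bookkeeping closes: solving $\frac{q}{\lambda}=\frac{\theta}{6}+\frac{(1-\theta)q}{2q+2}$ gives $\theta=\frac{3q(\lambda-2q-2)}{(2q-1)\lambda}$ and $1-\theta=\frac{(q+1)(6q-\lambda)}{(2q-1)\lambda}$, whence the total powers of $\||\nabla\varphi|^{q-1}D^2\varphi\|_{L^2}$ and $\|\varphi\|_{L^\infty}$ are $\frac{2(\lambda-3)}{(2q-1)\lambda}$ and $\frac{6q-\lambda}{(2q-1)\lambda}$ respectively, as claimed. The only inaccuracy is your closing remark about the upper endpoint: the condition $\theta\leq1$ is equivalent to $\lambda\leq 6q$, not to $\lambda\leq 4q+1$, so the stated upper bound $4q+1$ is strictly stronger than what the Gagliardo--Nirenberg step requires for $q\geq1$; it is inherited from the cited lemma (where it serves other purposes) rather than forced by your argument, which in fact proves the inequality on all of $[2q+2,6q]$. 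This does not affect the validity of the proof on the stated range.
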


Let us state two well-known results of solution of \dref{1.1fghyuisda}.

\begin{lemma}\label{ghjssdeedrfe116lemma70hhjj}
 The solution  of  \dref{1.1fghyuisda} satisfies
  \begin{equation}
 \begin{array}{rl}
 \|n_{\varepsilon}(\cdot,t)\|_{L^1(\Omega)}=\|n_0\|_{L^1(\Omega)}~~~\mbox{for all}~~t\in (0, T_{max,\varepsilon})
\end{array}\label{vgbhssddaqwswddaassffssff3.10deerfgghhjuuloollgghhhyhh}
\end{equation}
and
  \begin{equation}
 \begin{array}{rl}
 \|c_{\varepsilon}(\cdot,t)\|_{L^\infty(\Omega)}\leq\|c_0\|_{L^\infty(\Omega)}~~~\mbox{for all}~~t\in (0, T_{max,\varepsilon}).
\end{array}\label{hnjmssddaqwswddaassffssff3.10deerfgghhjuuloollgghhhyhh}
\end{equation}
\end{lemma}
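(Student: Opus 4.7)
The plan is to treat the two assertions separately, as each is a direct consequence of the structural features of the regularized problem \dref{1.1fghyuisda}.

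For the mass conservation of $n_\varepsilon$, I would simply integrate the first equation of \dref{1.1fghyuisda} over $\Omega$ and show that every boundary contribution vanishes. Using $\nabla\cdot u_\varepsilon=0$ together with $u_\varepsilon=0$ on $\partial\Omega$, the convective term rewrites as $u_\varepsilon\cdot\nabla n_\varepsilon=\nabla\cdot(n_\varepsilon u_\varepsilon)$ and integrates to zero by the divergence theorem. The diffusion flux satisfies $D_\varepsilon(n_\varepsilon)\nabla n_\varepsilon\cdot\nu=0$ on $\partial\Omega$ because $\nabla n_\varepsilon\cdot\nu=0$. Finally, the chemotactic flux $n_\varepsilon F_\varepsilon(n_\varepsilon)S_\varepsilon(x,n_\varepsilon,c_\varepsilon)\nabla c_\varepsilon$ vanishes at $\partial\Omega$ thanks to the cutoff in $S_\varepsilon=\rho_\varepsilon S$ with $\rho_\varepsilon\in C_0^\infty(\Omega)$. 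Hence $\frac{d}{dt}\int_\Omega n_\varepsilon=0$, and \dref{vgbhssddaqwswddaassffssff3.10deerfgghhjuuloollgghhhyhh} follows upon integrating in time.

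For the $L^\infty$-bound on $c_\varepsilon$, I would appeal to a parabolic comparison argument. Since $n_\varepsilon\geq 0$ and $c_\varepsilon\geq 0$ by Lemma \ref{lemma70}, the second equation in \dref{1.1fghyuisda} can be written as
$$c_{\varepsilon t}+u_\varepsilon\cdot\nabla c_\varepsilon-\Delta c_\varepsilon=-n_\varepsilon c_\varepsilon\leq 0,$$
so $c_\varepsilon$ is a subsolution of the linear drift-diffusion problem $v_t+u_\varepsilon\cdot\nabla v=\Delta v$ with homogeneous Neumann data and initial datum $c_0$. Testing with $(c_\varepsilon-\|c_0\|_{L^\infty(\Omega)})_+$, using $\nabla c_\varepsilon\cdot\nu=0$ to kill the boundary term after integration by parts of the Laplacian, and observing that $\int_\Omega u_\varepsilon\cdot\nabla [(c_\varepsilon-\|c_0\|_\infty)_+^2]/2=0$ because $\nabla\cdot u_\varepsilon=0$ with $u_\varepsilon|_{\partial\Omega}=0$, I obtain $\frac{d}{dt}\int_\Omega(c_\varepsilon-\|c_0\|_{L^\infty(\Omega)})_+^2\leq 0$, which yields the pointwise bound \dref{hnjmssddaqwswddaassffssff3.10deerfgghhjuuloollgghhhyhh}.

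Neither step presents a genuine obstacle, both being standard verifications at the regularized level. The only point worth a line of care is the boundary vanishing of the chemotactic flux in the first step, which is precisely why the cutoff $\rho_\varepsilon$ was inserted in the definition \dref{3.10gghhjuuloollyuigghhhyy} of $S_\varepsilon$; without it, one would have to invoke the combined no-flux condition on the full cross-diffusive flux $D_\varepsilon(n_\varepsilon)\nabla n_\varepsilon-n_\varepsilon F_\varepsilon(n_\varepsilon)S_\varepsilon\cdot\nabla c_\varepsilon$ as in the original problem \dref{1.1}.
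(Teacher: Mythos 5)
Your proposal is correct; the paper in fact states this lemma without proof (as ``two well-known results''), and your argument is precisely the standard one it implicitly relies on: integration of the first equation with all boundary fluxes vanishing (correctly noting that the cutoff $\rho_\varepsilon$ in $S_\varepsilon$ kills the chemotactic flux at $\partial\Omega$, since the regularized problem only imposes $\nabla n_\varepsilon\cdot\nu=0$ rather than the combined no-flux condition), and a subsolution/testing argument with $(c_\varepsilon-\|c_0\|_{L^\infty(\Omega)})_+$ for the second bound. Both steps are sound as written.
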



\begin{lemma}\label{lemma630jklhhjccvvfggj}
Let $m>\frac{4}{3}$.
There exists $ C > 0$ such that
 the solution of \dref{1.1fghyuisda} satisfies
%
%
%
%
\begin{equation}
\frac{d}{dt}\int_{\Omega}{|u_{\varepsilon}|^2}+\int_{\Omega}{|\nabla u_{\varepsilon}|^2}\leq\frac{1}{8}\|\nabla n_{\varepsilon}^{m-1}\|_{L^{2}(\Omega)}^{2}+C~~\mbox{for all}~~ t\in(0, T_{max,\varepsilon}),
\label{ddz2.5ghjff48cfg924ghffggyuji}
\end{equation}
\end{lemma}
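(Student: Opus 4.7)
The plan is to test the third equation of \dref{1.1fghyuisda} against $u_{\varepsilon}$ and control the buoyancy force by the diffusive term from the first equation, exploiting the mass conservation \dref{vgbhssddaqwswddaassffssff3.10deerfgghhjuuloollgghhhyhh} together with a Gagliardo--Nirenberg interpolation whose admissibility relies precisely on the hypothesis $m>\frac{4}{3}$.

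First I would multiply the Stokes equation by $u_\varepsilon$ and integrate over $\Omega$. Since $\nabla\cdot Y_\varepsilon u_\varepsilon=0$ (the Yosida approximation preserves solenoidality) and $u_\varepsilon|_{\partial\Omega}=0$, the convective term drops out:
\begin{equation*}
\int_\Omega (Y_\varepsilon u_\varepsilon\cdot\nabla)u_\varepsilon\cdot u_\varepsilon=0.
\end{equation*}
Together with the standard integration by parts for $-\Delta u_\varepsilon$ this yields
\begin{equation*}
\tfrac{1}{2}\frac{d}{dt}\int_\Omega|u_\varepsilon|^2+\int_\Omega|\nabla u_\varepsilon|^2=\int_\Omega n_\varepsilon\nabla\phi\cdot u_\varepsilon.
\end{equation*}
Applying H\"older's inequality, \dref{dd1.1fghyuisdakkkllljjjkk}, and the 3D Sobolev embedding $W^{1,2}_0(\Omega)\hookrightarrow L^6(\Omega)$ gives
\begin{equation*}
\int_\Omega n_\varepsilon\nabla\phi\cdot u_\varepsilon\leq\|\nabla\phi\|_{L^\infty}\|n_\varepsilon\|_{L^{6/5}(\Omega)}\|u_\varepsilon\|_{L^6(\Omega)}\leq C_1\|n_\varepsilon\|_{L^{6/5}(\Omega)}\|\nabla u_\varepsilon\|_{L^2(\Omega)},
\end{equation*}
so that Young's inequality followed by multiplication by $2$ leads to
\begin{equation*}
\frac{d}{dt}\int_\Omega|u_\varepsilon|^2+\int_\Omega|\nabla u_\varepsilon|^2\leq C_2\|n_\varepsilon\|_{L^{6/5}(\Omega)}^2.
\end{equation*}

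The remaining task is to bound $\|n_\varepsilon\|_{L^{6/5}(\Omega)}^2$ by $\tfrac{1}{8}\|\nabla n_\varepsilon^{m-1}\|_{L^2(\Omega)}^2$ plus a constant. Setting $w:=n_\varepsilon^{m-1}$, the identity $\|n_\varepsilon\|_{L^{6/5}(\Omega)}=\|w\|_{L^{6/(5(m-1))}(\Omega)}^{1/(m-1)}$ lets me apply the Gagliardo--Nirenberg inequality in $\mathbb{R}^3$ to $w$ between $L^{6/(5(m-1))}$ and the pair $(W^{1,2},L^{1/(m-1)})$; the mass conservation \dref{vgbhssddaqwswddaassffssff3.10deerfgghhjuuloollgghhhyhh} keeps $\|w\|_{L^{1/(m-1)}(\Omega)}=\|n_0\|_{L^1(\Omega)}^{m-1}$ uniformly bounded. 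A short computation of the interpolation exponent yields
\begin{equation*}
\|n_\varepsilon\|_{L^{6/5}(\Omega)}^2\leq C_3\|\nabla n_\varepsilon^{m-1}\|_{L^2(\Omega)}^{\,2/(6m-7)}+C_3,
\end{equation*}
and here the assumption $m>\frac{4}{3}$ is crucial, as it forces $\frac{2}{6m-7}<2$, so a final application of Young's inequality absorbs the right-hand side into $\tfrac{1}{8}\|\nabla n_\varepsilon^{m-1}\|_{L^2(\Omega)}^2+C$, giving \dref{ddz2.5ghjff48cfg924ghffggyuji}.

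The only delicate point is the verification of the Gagliardo--Nirenberg exponent: one must check both that the interpolation is admissible in three dimensions (which reduces to $1/(m-1)<6$, i.e.\ $m>7/6$, automatic from $m>4/3$) and that the resulting exponent on $\|\nabla n_\varepsilon^{m-1}\|_{L^2}$ stays strictly below $2$; the latter is exactly the threshold $m>4/3$ appearing in the hypothesis, so the argument breaks at the endpoint and the sharpness of the assumption is visible in the proof.
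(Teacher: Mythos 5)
Your argument is correct and follows essentially the same route as the paper: the energy identity from testing the Stokes equation with $u_\varepsilon$, the H\"older/Sobolev bound $\int_\Omega n_\varepsilon\nabla\phi\cdot u_\varepsilon\le C\|n_\varepsilon\|_{L^{6/5}(\Omega)}\|\nabla u_\varepsilon\|_{L^2(\Omega)}$, the Gagliardo--Nirenberg interpolation of $\|n_\varepsilon^{m-1}\|_{L^{6/(5(m-1))}(\Omega)}$ against $\|\nabla n_\varepsilon^{m-1}\|_{L^2(\Omega)}$ and the conserved mass, yielding the exponent $\tfrac{2}{6m-7}$, and the final Young step using $m>\tfrac{4}{3}$. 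Your explicit verification that the convective term vanishes and that the interpolation exponents are admissible is a welcome addition that the paper leaves implicit.
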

\begin{proof}Multiplying the
third equation of \dref{1.1fghyuisda} by $u_\varepsilon$, and then integrating by parts over $\Omega$ and using $\nabla\cdot u_{\varepsilon}=0$, it follows that
\begin{equation}
\frac{1}{2}\frac{d}{dt}\int_{\Omega}{|u_{\varepsilon}|^2}+\int_{\Omega}{|\nabla u_{\varepsilon}|^2}= \int_{\Omega}n_{\varepsilon}u_{\varepsilon}\cdot\nabla \phi~~\mbox{for all}~~ t\in(0, T_{max,\varepsilon}).
\label{ddddfgcz2.5ghju48cfg924ghyuji}
\end{equation}
Here we use the H\"{o}lder inequality, \dref{dd1.1fghyuisdakkkllljjjkk} and the continuity of the embedding $W^{1,2}(\Omega)\hookrightarrow L^6(\Omega)$ and  to
find $C_1 > 0$ such that
\begin{equation}
\begin{array}{rl}
\disp\int_{\Omega}n_{\varepsilon}u_{\varepsilon}\cdot\nabla \phi\leq&\disp{\|\nabla \phi\|_{L^\infty(\Omega)}\|n_{\varepsilon}\|_{L^{\frac{6}{5}}(\Omega)}\|\nabla u_{\varepsilon}\|_{L^{2}(\Omega)}}\\
\leq&\disp{C_1\|n_{\varepsilon}\|_{L^{\frac{6}{5}}(\Omega)}\|\nabla u_{\varepsilon}\|_{L^{2}(\Omega)}~~\mbox{for all}~~ t\in(0, T_{max,\varepsilon}),}\\
\end{array}
\label{ddddfgcz2.5ghju48cfg924ghyuji}
\end{equation}
According to the Gagliardo--Nirenberg inequality and \dref{vgbhssddaqwswddaassffssff3.10deerfgghhjuuloollgghhhyhh}, it is readily to see that
\begin{equation}
\begin{array}{rl}
\disp\|n_{\varepsilon}\|_{L^{\frac{6}{5}(\Omega)}}=&\disp{\|n_{\varepsilon}^{m-1}\|_{L^{\frac{6}{5(m-1)}}(\Omega)}^{\frac{1}{m-1}}}\\
\leq&\disp{C_2\|\nabla n_{\varepsilon}^{m-1}\|_{L^{2}(\Omega)}^{\frac{1}{6m-7}}\| n_{\varepsilon}^{m-1}\|_{L^{\frac{1}{m-1}}(\Omega)}^{\frac{1}{m-1}-\frac{1}{6m-7}}}\\
\leq&\disp{C_3(\|\nabla n_{\varepsilon}^{m-1}\|_{L^{2}(\Omega)}^{\frac{1}{6m-7}}+1)~~\mbox{for all}~~ t\in(0, T_{max,\varepsilon})}\\
\end{array}
\label{ddddfgcz2.5ghju4cddfff8cfg924gjjkkkhyuji}
\end{equation}
with some positive constants  $C_2$ and $C_3$ independent of $\varepsilon$.
Next, by   \dref{ddddfgcz2.5ghju48cfg924ghyuji}--\dref{ddddfgcz2.5ghju4cddfff8cfg924gjjkkkhyuji} and using the
Young inequality and $m>\frac{4}{3}$  yields
\begin{equation}
\begin{array}{rl}
\disp{\int_{\Omega}n_{\varepsilon}u_{\varepsilon}\cdot\nabla \phi}\leq &\disp{\frac{1}{2}\|\nabla u_{\varepsilon}\|_{L^{2}(\Omega)}^2+C_4(\|\nabla n_{\varepsilon}^{m-1}\|_{L^{2}(\Omega)}^{\frac{2}{6m-7}}+1)}\\
\leq &\disp{\frac{1}{2}\|\nabla u_{\varepsilon}\|_{L^{2}(\Omega)}^2+\frac{1}{8}\|\nabla n_{\varepsilon}^{m-1}\|_{L^{2}(\Omega)}^{2}+C_5~~\mbox{for all}~~ t\in(0, T_{max,\varepsilon}).}\\
\end{array}
\label{ddddfgcxccdd2.5ghju4cvvbbttthdfff8cfg924ghyuji}
\end{equation}
Here $C_4$ and $C_5$ are positive constants independent of $\varepsilon$.
Finally, putting \dref{ddddfgcxccdd2.5ghju4cvvbbttthdfff8cfg924ghyuji} into \dref{ddddfgcz2.5ghju48cfg924ghyuji}, one obtains \dref{ddz2.5ghjff48cfg924ghffggyuji}.
\end{proof}

\begin{lemma}\label{lemma630jklhhjj}
Let $m>\frac{2}{3}$.
There exists $ C > 0$ such that for every
$\delta_1>0$,
 the solution of \dref{1.1fghyuisda} satisfies
%
%
%
%
\begin{equation}
\frac{d}{dt}\int_{\Omega}{|u_{\varepsilon}|^2}+\int_{\Omega}{|\nabla u_{\varepsilon}|^2}\leq\delta_1\int_{\Omega}\frac{D_{\varepsilon}(n_{\varepsilon})|\nabla n_{\varepsilon}|^2}{n_{\varepsilon}}+C~~\mbox{for all}~~ t\in(0, T_{max,\varepsilon}),
\label{ddddfgcz2.5ghju48cfg924ghffggyuji}
\end{equation}
\end{lemma}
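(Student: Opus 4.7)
The plan is to follow the same strategy as in Lemma~\ref{lemma630jklhhjccvvfggj}, but to sharpen the interpolation step so that the dissipative quantity appearing on the right-hand side is the natural diffusive energy $\int_\Omega D_\varepsilon(n_\varepsilon)|\nabla n_\varepsilon|^2/n_\varepsilon$, which the cross-diffusion term in the $n$-equation produces already for any $m>0$, rather than $\|\nabla n_\varepsilon^{m-1}\|_{L^2}^2$, which forced the stronger assumption $m>\frac{4}{3}$. After testing the third equation of \dref{1.1fghyuisda} against $u_\varepsilon$, using $\nabla\cdot u_\varepsilon=\nabla\cdot(Y_\varepsilon u_\varepsilon)=0$ to kill the pressure and the modified convection term, one arrives at the same identity \dref{ddddfgcz2.5ghju48cfg924ghyuji}, and H\"older combined with $W^{1,2}(\Omega)\hookrightarrow L^6(\Omega)$ and Young's inequality reduces matters to bounding $\|n_\varepsilon\|_{L^{6/5}(\Omega)}^2$ by $\delta_1\int_\Omega D_\varepsilon(n_\varepsilon)|\nabla n_\varepsilon|^2/n_\varepsilon + C$.

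The key change is to base the interpolation on $v_\varepsilon:=(n_\varepsilon+\varepsilon)^{m/2}$ rather than $n_\varepsilon^{m-1}$. Since $|\nabla v_\varepsilon|^2=\frac{m^2}{4}(n_\varepsilon+\varepsilon)^{m-2}|\nabla n_\varepsilon|^2$, and $D_\varepsilon(n_\varepsilon)\ge C_D(n_\varepsilon+\varepsilon)^{m-1}$ by \dref{ghnjmk9161gyyhuug} together with $n_\varepsilon\le n_\varepsilon+\varepsilon$, one directly checks that
\[
\int_\Omega|\nabla v_\varepsilon|^2 \le C_2\int_\Omega\frac{D_\varepsilon(n_\varepsilon)|\nabla n_\varepsilon|^2}{n_\varepsilon}.
\]
Because $n_\varepsilon\le v_\varepsilon^{2/m}$, we have $\|n_\varepsilon\|_{L^{6/5}(\Omega)}\le \|v_\varepsilon\|_{L^{12/(5m)}(\Omega)}^{2/m}$, and the three-dimensional Gagliardo--Nirenberg inequality applied with the pair $(p,q)=(\frac{12}{5m},\frac{2}{m})$ gives
\[
\|v_\varepsilon\|_{L^{12/(5m)}(\Omega)}\le C_3\bigl(\|\nabla v_\varepsilon\|_{L^2(\Omega)}^{\theta}\|v_\varepsilon\|_{L^{2/m}(\Omega)}^{1-\theta}+\|v_\varepsilon\|_{L^{2/m}(\Omega)}\bigr),\qquad \theta=\frac{m}{2(3m-1)},
\]
and $\|v_\varepsilon\|_{L^{2/m}(\Omega)}=\|n_\varepsilon+\varepsilon\|_{L^1(\Omega)}^{m/2}$ is uniformly bounded by mass conservation \dref{vgbhssddaqwswddaassffssff3.10deerfgghhjuuloollgghhhyhh}. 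Raising to the power $2/m$ and squaring yields
\[
\|n_\varepsilon\|_{L^{6/5}(\Omega)}^2 \le C_4\bigl(\|\nabla v_\varepsilon\|_{L^2(\Omega)}^{2/(3m-1)}+1\bigr).
\]

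The hypothesis $m>\frac{2}{3}$ is precisely what makes $\frac{2}{3m-1}<2$, so Young's inequality delivers, for any prescribed $\delta_1>0$,
\[
\|n_\varepsilon\|_{L^{6/5}(\Omega)}^2 \le \frac{\delta_1}{C_1C_2}\int_\Omega|\nabla v_\varepsilon|^2+C_5 \le \delta_1\int_\Omega\frac{D_\varepsilon(n_\varepsilon)|\nabla n_\varepsilon|^2}{n_\varepsilon}+C_5,
\]
which, inserted into the energy identity for $u_\varepsilon$, produces \dref{ddddfgcz2.5ghju48cfg924ghffggyuji}. I expect the delicate point to be the bookkeeping of the Gagliardo--Nirenberg exponents: one must select the interpolation pair so that the resulting power of $\|\nabla v_\varepsilon\|_{L^2(\Omega)}$ after squaring is strictly less than $2$, and this tightness is exactly what fixes the threshold $m>\frac{2}{3}$ stated in the lemma; the remainder of the argument is routine, with only the minor technicality of handling the $\varepsilon$-shift inside $D_\varepsilon$ when relating $\int|\nabla v_\varepsilon|^2$ to the diffusive dissipation.
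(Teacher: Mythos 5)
Your proposal is correct and follows essentially the same route as the paper: test the projected Stokes equation with $u_{\varepsilon}$, estimate $\int_{\Omega}n_{\varepsilon}u_{\varepsilon}\cdot\nabla\phi$ via H\"older and $W^{1,2}\hookrightarrow L^{6}$, interpolate $\|n_{\varepsilon}\|_{L^{6/5}}$ through the $L^{12/(5m)}$-norm of the $m/2$-power using Gagliardo--Nirenberg and mass conservation, and close with Young's inequality, where $m>\frac{2}{3}$ is exactly what makes the exponent $\frac{2}{3m-1}$ subcritical. Your only deviation --- interpolating with $(n_{\varepsilon}+\varepsilon)^{m/2}$ rather than $n_{\varepsilon}^{m/2}$ so that the comparison with $D_{\varepsilon}(n_{\varepsilon})/n_{\varepsilon}$ is clean for all $m$ --- is a minor (and slightly more careful) variant of the paper's identical computation.
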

\begin{proof}
We begin with \dref{ddddfgcz2.5ghju48cfg924ghyuji},  the Gagliardo--Nirenberg inequality and \dref{vgbhssddaqwswddaassffssff3.10deerfgghhjuuloollgghhhyhh} ensure
\begin{equation}
\begin{array}{rl}
\disp\|n_{\varepsilon}\|_{L^{\frac{6}{5}(\Omega)}}=&\disp{\|n_{\varepsilon}^{\frac{m}{2}}\|_{L^{\frac{12}{5m}}(\Omega)}^{\frac{2}{m}}}\\
\leq&\disp{C_1\|\nabla n_{\varepsilon}^{\frac{m}{2}}\|_{L^{2}(\Omega)}^{\frac{1}{3m-1}}\| n_{\varepsilon}^{\frac{m}{2}}\|_{L^{\frac{2}{m}}(\Omega)}^{\frac{2}{m}-\frac{1}{3m-1}}}\\
\leq&\disp{C_2(\|\nabla n_{\varepsilon}^{\frac{m}{2}}\|_{L^{2}(\Omega)}^{\frac{1}{3m-1}}+1)~~\mbox{for all}~~ t\in(0, T_{max,\varepsilon}),}\\
\end{array}
\label{ddddfgcz2.5ghju4cddfff8cfg924ghyuji}
\end{equation}
where $C_1$ and $C_2$ are positive constants independent of $\varepsilon$.
Next, substituting \dref{ddddfgcz2.5ghju4cddfff8cfg924ghyuji} into \dref{ddddfgcz2.5ghju48cfg924ghyuji} and using the
Young inequality, \dref{ghnjmk9161gyyhuug} and $m>\frac{2}{3}$  yields
\begin{equation}
\begin{array}{rl}
\|n_{\varepsilon}\|_{L^{\frac{6}{5}(\Omega)}}\leq &\disp{\frac{1}{2}\|\nabla u_{\varepsilon}\|_{L^{2}(\Omega)}^2+C_3(\|\nabla n_{\varepsilon}^{\frac{m}{2}}\|_{L^{2}(\Omega)}^{\frac{2}{3m-1}}+1)}\\
\leq &\disp{\frac{1}{2}\|\nabla u_{\varepsilon}\|_{L^{2}(\Omega)}^2+\frac{\delta_1}{2C_D}\|\nabla n_{\varepsilon}^{\frac{m}{2}}\|_{L^{2}(\Omega)}^{2}+C_4}\\
\leq &\disp{\frac{1}{2}\|\nabla u_{\varepsilon}\|_{L^{2}(\Omega)}^2+\frac{\delta_1}{2}\int_{\Omega}\frac{D_{\varepsilon}(n_{\varepsilon})|\nabla n_{\varepsilon}|^2}{n_{\varepsilon}}+C_4~~\mbox{for all}~~ t\in(0, T_{max,\varepsilon}),}\\
\end{array}
\label{ddddfgczxxccdd2.5ghju4cddfff8cfg924ghyuji}
\end{equation}
where $C_3$ and $C_4$ are positive constants independent of $\varepsilon$.
Finally, collecting \dref{ddddfgcz2.5ghju48cfg924ghyuji} and \dref{ddddfgczxxccdd2.5ghju4cddfff8cfg924ghyuji}, we can conclude \dref{ddddfgcz2.5ghju48cfg924ghffggyuji}.
\end{proof}

\begin{lemma}\label{ghjsgghhsdeedrfe116lemma70hhjj} Let $\frac{10}{9}< m\leq2$.
 There exist $\mu_0, C > 0$ such that for every $\varepsilon > 0$ and $\delta_i(i=2,3,4,5)>0$
 \begin{equation}
 \begin{array}{rl}
 &\disp{\frac{d}{dt}\int_{\Omega}\frac{|\nabla c_{\varepsilon}|^2}{c_{\varepsilon}}+\mu_0\int_{\Omega}c_{\varepsilon}|D^2\ln c_{\varepsilon}|^2+(\mu_0-\frac{\delta_2}{4}-\frac{\delta_3}{4})\int_{\Omega}\frac{|\nabla c_{\varepsilon}|^4}{c_{\varepsilon}^3}}\\
 \leq&\disp{ (\frac{\delta_4}{4}+\frac{\delta_5}{4})\int_{\Omega}\frac{D_{\varepsilon}(n_{\varepsilon})|\nabla n_{\varepsilon}|^2}{n_{\varepsilon}}+\frac{4}{\delta_2}\|c_0\|_{L^\infty(\Omega)}\int_{\Omega}|\nabla u_{\varepsilon}|^2+C~~~\mbox{for all}~~t\in (0, T_{max,\varepsilon})}\\
\end{array}\label{vgbhsvvbbsddaqwswddaassffssff3.10deerfgghhjuuloollgghhhyhh}
\end{equation}
\end{lemma}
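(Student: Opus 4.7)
The plan is to differentiate $\int_\Omega \frac{|\nabla c_\varepsilon|^2}{c_\varepsilon}$ in time, using the second equation of \dref{1.1fghyuisda} and two integrations by parts (justified by $\nabla c_\varepsilon\cdot\nu=0$ on $\partial\Omega$ and the regularity recorded in \dref{1.1ddfghyuisda}), to arrive at the identity $\frac{d}{dt}\int_\Omega\frac{|\nabla c_\varepsilon|^2}{c_\varepsilon} = -2\int_\Omega\frac{c_{\varepsilon,t}\Delta c_\varepsilon}{c_\varepsilon} + \int_\Omega\frac{c_{\varepsilon,t}|\nabla c_\varepsilon|^2}{c_\varepsilon^2}$. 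Inserting $c_{\varepsilon,t}=\Delta c_\varepsilon-n_\varepsilon c_\varepsilon-u_\varepsilon\cdot\nabla c_\varepsilon$ then splits the right-hand side into a diffusive group $-2\int\frac{|\Delta c_\varepsilon|^2}{c_\varepsilon}+\int\frac{|\nabla c_\varepsilon|^2\Delta c_\varepsilon}{c_\varepsilon^2}$, a consumption group $2\int n_\varepsilon\Delta c_\varepsilon-\int\frac{n_\varepsilon|\nabla c_\varepsilon|^2}{c_\varepsilon}$, and a transport group; any boundary contribution produced by the integrations by parts is non-positive because $\Omega$ is convex and $\nabla c_\varepsilon\cdot\nu=0$.

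Lemma \ref{lemmffffgga630jklhhjj}(iv) applied with $w=c_\varepsilon$ reduces the diffusive group to $-\mu_0\int c_\varepsilon|D^2\ln c_\varepsilon|^2-\mu_0\int\frac{|\nabla c_\varepsilon|^4}{c_\varepsilon^3}$ plus a term $C\int c_\varepsilon$ that is absorbed via \dref{hnjmssddaqwswddaassffssff3.10deerfgghhjuuloollgghhhyhh}. For the transport group, a single integration by parts combined with $\nabla\cdot u_\varepsilon=0$ and the Neumann condition collapses it into $-2\int\frac{\nabla c_\varepsilon\cdot(\nabla u_\varepsilon)^T\nabla c_\varepsilon}{c_\varepsilon}$; writing the integrand as $(|\nabla c_\varepsilon|^2 / c_\varepsilon^{3/2})\cdot(\sqrt{c_\varepsilon}\,|\nabla u_\varepsilon|)$ and applying Young's inequality with parameter $\delta_2$ then produces exactly $\frac{\delta_2}{4}\int\frac{|\nabla c_\varepsilon|^4}{c_\varepsilon^3}+\frac{4}{\delta_2}\|c_0\|_{L^\infty(\Omega)}\int|\nabla u_\varepsilon|^2$ after invoking \dref{hnjmssddaqwswddaassffssff3.10deerfgghhjuuloollgghhhyhh}.

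For the consumption group I would integrate $2\int n_\varepsilon\Delta c_\varepsilon$ by parts to get $-2\int\nabla n_\varepsilon\cdot\nabla c_\varepsilon$ and discard the favourable $-\int\frac{n_\varepsilon|\nabla c_\varepsilon|^2}{c_\varepsilon}$. A first Young inequality with parameter $\delta_5$, built from the split $|\nabla n_\varepsilon||\nabla c_\varepsilon|=\sqrt{D_\varepsilon(n_\varepsilon)/n_\varepsilon}\,|\nabla n_\varepsilon|\cdot\sqrt{n_\varepsilon/D_\varepsilon(n_\varepsilon)}\,|\nabla c_\varepsilon|$, extracts the $\frac{\delta_5}{4}\int\frac{D_\varepsilon(n_\varepsilon)|\nabla n_\varepsilon|^2}{n_\varepsilon}$ contribution and, via $D_\varepsilon(n_\varepsilon)\geq C_D n_\varepsilon^{m-1}$, leaves a remainder bounded by a constant multiple of $\int n_\varepsilon^{2-m}|\nabla c_\varepsilon|^2$. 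A second Young step with parameter $\delta_3$, applied to the split $n_\varepsilon^{2-m}|\nabla c_\varepsilon|^2=(n_\varepsilon^{2-m} c_\varepsilon^{3/2})\cdot(|\nabla c_\varepsilon|^2 c_\varepsilon^{-3/2})$, peels off the $\frac{\delta_3}{4}\int\frac{|\nabla c_\varepsilon|^4}{c_\varepsilon^3}$ piece and leaves the residual $\int n_\varepsilon^{2(2-m)} c_\varepsilon^3\leq\|c_0\|_{L^\infty(\Omega)}^3\int n_\varepsilon^{2(2-m)}$.

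I expect the main obstacle to be controlling this final residual $\int_\Omega n_\varepsilon^{2(2-m)}$. When $m\geq\frac{3}{2}$ the exponent $2(2-m)$ is at most $1$ and mass conservation \dref{vgbhssddaqwswddaassffssff3.10deerfgghhjuuloollgghhhyhh} together with H\"older settles it immediately; for $\frac{10}{9}<m<\frac{3}{2}$, however, I would apply the Gagliardo-Nirenberg inequality to $n_\varepsilon^{m/2}$, which, combined with mass conservation, yields a bound of the form $\int_\Omega n_\varepsilon^{2(2-m)}\leq C\|\nabla n_\varepsilon^{m/2}\|_{L^2(\Omega)}^{6(3-2m)/(3m-1)}+C$. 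A short computation shows that $6(3-2m)/(3m-1)$ stays strictly below $2$ exactly when $m>\frac{10}{9}$, so a final Young's inequality with parameter $\delta_4$ absorbs this residual into $\frac{\delta_4}{4}\int\frac{D_\varepsilon(n_\varepsilon)|\nabla n_\varepsilon|^2}{n_\varepsilon}$ plus a genuine universal constant. Assembling the diffusive, transport and consumption estimates then reproduces the claimed inequality, and this Gagliardo-Nirenberg step is the delicate point at which the sharp threshold $m>\frac{10}{9}$ of Theorem \ref{theorem3} enters.
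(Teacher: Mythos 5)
Your proposal is correct and follows essentially the same route as the paper: the same time-derivative identity, Lemma \ref{lemmffffgga630jklhhjj}(iv) for the diffusive group, the same cancellation reducing the transport terms to $-2\int_\Omega\frac{1}{c_\varepsilon}\nabla c_\varepsilon\cdot(\nabla u_\varepsilon\nabla c_\varepsilon)$, and the same endgame in which the residual $\int_\Omega n_\varepsilon^{4-2m}$ is absorbed via Gagliardo--Nirenberg precisely because $6(3-2m)/(3m-1)<2$ when $m>\frac{10}{9}$. The only (immaterial) difference is that you apply the two Young inequalities to $-2\int_\Omega\nabla n_\varepsilon\cdot\nabla c_\varepsilon$ in the opposite order from the paper, which first extracts $\frac{\delta_3}{4}\int_\Omega\frac{|\nabla c_\varepsilon|^4}{c_\varepsilon^3}$ and then splits the remaining $c_\varepsilon|\nabla n_\varepsilon|^{4/3}$ term.
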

\begin{proof}
We begin by computing $\frac{d}{dt}\int_{\Omega}\frac{|\nabla c_{\varepsilon}|^2}{c_{\varepsilon}}$.
 For any 
 $t\in(0, T_{max,\varepsilon})$, we have
\begin{equation}
 \begin{array}{rl}
 \disp\frac{d}{dt}\disp\int_{\Omega}\frac{|\nabla c_{\varepsilon}|^2}{c_{\varepsilon}}=&\disp{2\int_{\Omega}\frac{\nabla c_{\varepsilon}\cdot\nabla c_{\varepsilon t}}{c_{\varepsilon}}-\int_{\Omega}\frac{|\nabla c_{\varepsilon}|^2c_{\varepsilon t}}{c_{\varepsilon}^2}}\\
 =&\disp{-2\int_{\Omega}\frac{\Delta c_{\varepsilon} c_{\varepsilon t}}{c_{\varepsilon}}+\int_{\Omega}\frac{|\nabla c_{\varepsilon}|^2c_{\varepsilon t}}{c_{\varepsilon}^2}}\\
  =&\disp{-2\int_{\Omega}\frac{|\Delta c_{\varepsilon}|^2 }{c_{\varepsilon}}+2\int_{\Omega}\frac{\Delta c_{\varepsilon} n_{\varepsilon} c_{\varepsilon}}{c_{\varepsilon}}+2\int_{\Omega}\frac{\Delta c_{\varepsilon}}{c_{\varepsilon}}u_{\varepsilon}\cdot\nabla c_{\varepsilon}}\\
  &+\disp{\int_{\Omega}\frac{|\nabla c_{\varepsilon}|^2\Delta c_{\varepsilon} }{c_{\varepsilon}^2}-\int_{\Omega}\frac{|\nabla c_{\varepsilon}|^2n_{\varepsilon} c_{\varepsilon} }{c_{\varepsilon}^2}-\int_{\Omega}\frac{|\nabla c_{\varepsilon}|^2u_{\varepsilon}\cdot\nabla c_{\varepsilon} }{c_{\varepsilon}^2}.}\\
\end{array}\label{vgbhsvvbbsddacvvvvbbqwswddaassffssff3.10deerfgghhjuuloollgghhhyhh}
\end{equation}
 From (vi) of  Lemma \ref{lemmffffgga630jklhhjj}, by Young inequality, there exist $\mu_0>0$ and $C(\mu_0)>0$
 such that
\begin{equation}
 \begin{array}{rl}
  &\disp{-2\int_{\Omega}\frac{|\Delta c_{\varepsilon}|^2 }{c_{\varepsilon}}+\int_{\Omega}\frac{|\nabla c_{\varepsilon}|^2\Delta c_{\varepsilon} }{c_{\varepsilon}^2}\leq -\mu_0\int_{\Omega}c_{\varepsilon}|D^2\ln c_{\varepsilon}|^2-\mu_0\int_{\Omega}\frac{|\nabla c_{\varepsilon}|^4}{c_{\varepsilon}^3}+C(\mu_0)\int_{\Omega}c_{\varepsilon}}\\
\end{array}\label{vccvvgbhsvvbbsddacvvvvbbqwswddaassffssff3.10deerfgghhjuuloollgghhhyhh}
\end{equation}
for all $t\in (0, T_{max,\varepsilon})$.
 As to the terms containing $u_{\varepsilon}$, we note that for all $\varepsilon > 0$
$$
 \begin{array}{rl}
&\disp{2\int_{\Omega}\frac{\Delta c_{\varepsilon}}{c_{\varepsilon}}(u_{\varepsilon}\cdot\nabla c_{\varepsilon})}\\
=&\disp{2\int_{\Omega}\frac{|\nabla c_{\varepsilon}|^2}{c_{\varepsilon}^2}u_{\varepsilon}\cdot\nabla c_{\varepsilon}-2\int_{\Omega}\frac{1}{c_{\varepsilon}}\nabla c_{\varepsilon}\cdot(\nabla u_{\varepsilon}\nabla c_{\varepsilon})-2\int_{\Omega}\frac{1}{c_{\varepsilon}}u_{\varepsilon}\cdot D^2c_{\varepsilon}\nabla c_{\varepsilon}~~\mbox{for all}~~t\in (0, T_{max,\varepsilon})}\\
\end{array}
$$
 and
 $$
 \begin{array}{rl}
\disp{\int_{\Omega}\frac{|\nabla c_{\varepsilon}|^2}{c_{\varepsilon}^2}u_{\varepsilon}\cdot\nabla c_{\varepsilon}=2\int_{\Omega}\frac{1}{c_{\varepsilon}}u_{\varepsilon}\cdot D^2c_{\varepsilon}\nabla c_{\varepsilon}~~\mbox{for all}~~t\in (0, T_{max,\varepsilon}),}\\
\end{array}
$$
so that due to the Young inequality and Lemma \ref{ghjssdeedrfe116lemma70hhjj}, for any $\delta_2 > 0 $
 \begin{equation}
 \begin{array}{rl}
\disp 2\int_{\Omega}\frac{\Delta c_{\varepsilon}}{c_{\varepsilon}}(u_{\varepsilon}\cdot\nabla c_{\varepsilon})-\int_{\Omega}\frac{|\nabla c_{\varepsilon}|^2}{c_{\varepsilon}^2}u_{\varepsilon}\cdot\nabla c_{\varepsilon}= &\disp{-2\int_{\Omega}\frac{1}{c_{\varepsilon}}\nabla c_{\varepsilon}\cdot(\nabla u_{\varepsilon}\nabla c_{\varepsilon})}\\
\leq &\disp{\frac{\delta_2}{4}\int_{\Omega}\frac{|\nabla c_{\varepsilon}|^4}{c_{\varepsilon}^3}+\frac{4}{\delta_2}\int_{\Omega}c_{\varepsilon}|\nabla u_{\varepsilon}|^2}\\
\leq &\disp{\frac{\delta_2}{4}\int_{\Omega}\frac{|\nabla c_{\varepsilon}|^4}{c_{\varepsilon}^3}+C_1\int_{\Omega}|\nabla u_{\varepsilon}|^2~~\mbox{for all}~~t\in (0, T_{max,\varepsilon})}\\
\end{array}\label{vgbhccvsvvbbsddacvvvvbbqwswddvvbbaassffssff3.10deerfgghhjuuloollgghhhyhh}
\end{equation}
with some $C_1:=\frac{4}{\delta_2}\|c_0\|_{L^\infty(\Omega)}$. And an integration by parts, the Young inequality, \dref{ghnjmk9161gyyhuug} and \dref{hnjmssddaqwswddaassffssff3.10deerfgghhjuuloollgghhhyhh}  shows
\begin{equation}
 \begin{array}{rl}
 2\disp\int_{\Omega}\frac{\Delta c_{\varepsilon} n_{\varepsilon} c_{\varepsilon}}{c_{\varepsilon}}=&\disp{-2\int_{\Omega}\nabla n_{\varepsilon}\cdot \nabla c_{\varepsilon}}\\
 \leq&\disp{\frac{\delta_3}{4}\int_{\Omega}\frac{|\nabla c_{\varepsilon}|^4}{c_{\varepsilon}^3}+2^{\frac{4}{3}}\delta^{-\frac{1}{3}}_3\int_{\Omega}c_{\varepsilon}|\nabla n_{\varepsilon}|^{\frac{4}{3}}}\\
\leq&\disp{\frac{\delta_3}{4}\int_{\Omega}\frac{|\nabla c_{\varepsilon}|^4}{c_{\varepsilon}^3}+\frac{\delta_4}{4C_{D}}\int_{\Omega}n_{\varepsilon}^{m-2}|\nabla n_{\varepsilon}|^{2}+C_2\int_{\Omega}n_{\varepsilon}^{4-2m}c_{\varepsilon}^3}\\
\leq&\disp{\frac{\delta_3}{4}\int_{\Omega}\frac{|\nabla c_{\varepsilon}|^4}{c_{\varepsilon}^3}+\frac{\delta_4}{4}\int_{\Omega}\frac{D_{\varepsilon}(n_{\varepsilon})|\nabla n_{\varepsilon}|^2}{n_{\varepsilon}}+C_3\int_{\Omega}n_{\varepsilon}^{4-2m}~~
\mbox{for all}~~t\in (0, T_{max,\varepsilon}),}\\
\end{array}\label{vgbhsvvbbsddacvvvvcvvvbbqwswddaassffssff3.10deerfgghhjuuloollgghhhyhh}
\end{equation}
where $\delta_3,\delta_4,C_2:=C_2(\delta_3,\delta_4), C_3:=C_3(\delta_3,\delta_4,\|c_0\|_{L^\infty(\Omega)})$  are positive constants.

Case $\frac{10}{9}<m<\frac{3}{2}$:
It is easy to deduce from the Gagliardo--Nirenberg inequality and \dref{vgbhssddaqwswddaassffssff3.10deerfgghhjuuloollgghhhyhh} that
\begin{equation}
\begin{array}{rl}
C_3\disp\int_{\Omega}n_{\varepsilon}^{4-2m}=&\disp{C_3\| n_{\varepsilon}^{\frac{m}{2}}\|_{L^{\frac{2(4-2m)}{m}}(\Omega)}^{\frac{2(4-2m)}{m}}}\\
\leq &\disp{C_4\|\nabla n_{\varepsilon}^{\frac{m}{2}}\|_{L^{2}(\Omega)}^{\frac{2(4-2m)\mu_1}{m}}\| n_{\varepsilon}^{\frac{m}{2}}\|_{L^{\frac{2}{m}}(\Omega)}^{\frac{2(4-2m)(1-\mu_1)}{m}}+\| n_{\varepsilon}^{\frac{m}{2}}\|_{L^{\frac{2}{m}}(\Omega)}^{\frac{2(4-2m)}{m}}}\\
\leq &\disp{C_5(\|\nabla n_{\varepsilon}^{\frac{m}{2}}\|_{L^{2}(\Omega)}^{\frac{2(4-2m)\mu_1}{m}}+1)}\\
= &\disp{C_5(\|\nabla n_{\varepsilon}^{\frac{m}{2}}\|_{L^{2}(\Omega)}^{\frac{6(3-2m)}{3m-1}}+1)~~\mbox{for all}~~t\in (0, T_{max,\varepsilon}),}\\
\end{array}\label{vgbhsvvbbsddffffggdaccvvcvvvvcvvvbbqwswddaassffssff3.10deerfgghhjuuloollgghhhyhh}
\end{equation}
where $C_4$ and $C_5$ are positive constants,
$$\mu_1=\frac{\frac{3m}{2}-\frac{3m}{2(4-2m)}}{\frac{3m-1}{2}}\in(0,1).$$
Now, in view of $m>\frac{10}{9},$ with the help of the Young inequality and \dref{vgbhsvvbbsddffffggdaccvvcvvvvcvvvbbqwswddaassffssff3.10deerfgghhjuuloollgghhhyhh}, for any $\delta_5>0$, we have
\begin{equation}
\begin{array}{rl}
&\disp{C_3\int_{\Omega}n_{\varepsilon}^{4-2m}\leq\frac{\delta_5}{4C_D}\|\nabla n_{\varepsilon}^{\frac{m}{2}}\|_{L^{2}(\Omega)}^{2}+C_6~~\mbox{for all}~~t\in (0, T_{max,\varepsilon})}\\
\end{array}\label{vgbhccvvvvsvvbbsddffffggdaccvvcvvvvcvvvbbqwswddaassffssff3.10deerfgghhjuuloollgghhhyhh}
\end{equation}
with some $C_6>0$.

Case $\frac{3}{2}\leq m\leq2$: With the help of the Young inequality and \dref{vgbhssddaqwswddaassffssff3.10deerfgghhjuuloollgghhhyhh}, we derive that
\begin{equation}
\begin{array}{rl}
C_3\disp\int_{\Omega}n_{\varepsilon}^{4-2m}
\leq &\disp{\int_{\Omega}n_{\varepsilon}+C_7}\\
\leq &\disp{C_8~~\mbox{for all}~~t\in (0, T_{max,\varepsilon}),}\\
\end{array}\label{vgbhsvvbbsdffssff3.10deerfgghhjuuloollgghhhyhh}
\end{equation}
where $C_7$ and $C_8$ are positive constants independent of $\varepsilon.$
Finally, we utilize \dref{vccvvgbhsvvbbsddacvvvvbbqwswddaassffssff3.10deerfgghhjuuloollgghhhyhh}--\dref{vgbhsvvbbsdffssff3.10deerfgghhjuuloollgghhhyhh}
 and \dref{vgbhsvvbbsddacvvvvbbqwswddaassffssff3.10deerfgghhjuuloollgghhhyhh} to deduce  the results.
\end{proof}

\begin{lemma}\label{ghjhhjssjkkllsgghhsdeedrfe116lemma70hhjj}  Let $\frac{10}{9}< m\leq2$ and $\delta>0$.
There is $C > 0$ such that for any 
$\delta_6$ and  $\delta_7$
 \begin{equation}
 \begin{array}{rl}
 \disp\frac{d}{dt}\int_{\Omega}n_{\varepsilon} \ln n_{\varepsilon}+
(1-\frac{\delta_7}{4})\int_{\Omega}\frac{D_{\varepsilon}(n_{\varepsilon})|\nabla n_{\varepsilon}|^2}{n_{\varepsilon}}\leq \frac{\delta_6}{4}\int_{\Omega}\frac{|\nabla c_{\varepsilon}|^4}{c_{\varepsilon}^3} +C~\mbox{for all}~t\in (0, T_{max,\varepsilon}).
\end{array}\label{vgccvssbbbbhsvvbbsddaqwswddaassffssff3.10deerfgghhjuuloollgghhhyhh}
\end{equation}
\end{lemma}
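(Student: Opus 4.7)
\medskip
\noindent\textbf{Proof proposal.} My plan is to test the first equation of the regularized system \dref{1.1fghyuisda} with $1+\ln n_\varepsilon$. Because $\nabla\!\cdot\! u_\varepsilon=0$ and $u_\varepsilon=0$ on $\partial\Omega$, the convective term $(u_\varepsilon\cdot\nabla n_\varepsilon)(1+\ln n_\varepsilon)$ integrates to zero, and after integration by parts the diffusive term delivers the good dissipation $-\int_\Omega D_\varepsilon(n_\varepsilon)|\nabla n_\varepsilon|^2/n_\varepsilon$. The chemotactic cross-term becomes $\int_\Omega F_\varepsilon(n_\varepsilon)S_\varepsilon(x,n_\varepsilon,c_\varepsilon)\nabla n_\varepsilon\cdot\nabla c_\varepsilon$, and by \dref{x1.73142vghf48gg}, \dref{1.ffggvbbnxxccvvn1} together with \dref{hnjmssddaqwswddaassffssff3.10deerfgghhjuuloollgghhhyhh} it is dominated by $S_0(\|c_0\|_{L^\infty(\Omega)})\int_\Omega |\nabla n_\varepsilon||\nabla c_\varepsilon|$, so the identity takes the schematic form
\begin{equation*}
\frac{d}{dt}\int_\Omega n_\varepsilon\ln n_\varepsilon+\int_\Omega\frac{D_\varepsilon(n_\varepsilon)|\nabla n_\varepsilon|^2}{n_\varepsilon}\le C_0\int_\Omega |\nabla n_\varepsilon||\nabla c_\varepsilon|.
\end{equation*}

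The next step is to split this right-hand side by Young's inequality in a way that matches the two target quantities. Writing $|\nabla n_\varepsilon||\nabla c_\varepsilon|=\sqrt{D_\varepsilon(n_\varepsilon)|\nabla n_\varepsilon|^2/n_\varepsilon}\cdot\sqrt{n_\varepsilon |\nabla c_\varepsilon|^2/D_\varepsilon(n_\varepsilon)}$, a first Young step peels off $\tfrac{\delta_7}{4}\int_\Omega D_\varepsilon(n_\varepsilon)|\nabla n_\varepsilon|^2/n_\varepsilon$ and leaves $C(\delta_7)\int_\Omega n_\varepsilon|\nabla c_\varepsilon|^2/D_\varepsilon(n_\varepsilon)$. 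Invoking \dref{ghnjmk9161gyyhuug} and $m\le 2$ bounds this by $C'\int_\Omega n_\varepsilon^{2-m}|\nabla c_\varepsilon|^2$. A second Young step with the factorization $n_\varepsilon^{2-m}|\nabla c_\varepsilon|^2=(n_\varepsilon^{2-m}c_\varepsilon^{3/2})\cdot(|\nabla c_\varepsilon|^2/c_\varepsilon^{3/2})$, combined with \dref{hnjmssddaqwswddaassffssff3.10deerfgghhjuuloollgghhhyhh}, produces $\tfrac{\delta_6}{4}\int_\Omega|\nabla c_\varepsilon|^4/c_\varepsilon^3$ together with a residual of the form $C''\int_\Omega n_\varepsilon^{4-2m}$.

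The residual $\int_\Omega n_\varepsilon^{4-2m}$ is exactly the term already treated in Lemma~\ref{ghjsgghhsdeedrfe116lemma70hhjj}. For $\tfrac{10}{9}<m<\tfrac{3}{2}$ I would reuse the Gagliardo--Nirenberg computation there (with the same exponent $\mu_1$), together with \dref{vgbhssddaqwswddaassffssff3.10deerfgghhjuuloollgghhhyhh}, to obtain $\int_\Omega n_\varepsilon^{4-2m}\le C_1(\|\nabla n_\varepsilon^{m/2}\|_{L^2(\Omega)}^{6(3-2m)/(3m-1)}+1)$; since $m>\tfrac{10}{9}$ forces the exponent $6(3-2m)/(3m-1)<2$, a final Young's inequality absorbs this into an additional arbitrarily small fraction of $\int_\Omega D_\varepsilon(n_\varepsilon)|\nabla n_\varepsilon|^2/n_\varepsilon$ (using $|\nabla n_\varepsilon^{m/2}|^2\le C\,D_\varepsilon(n_\varepsilon)|\nabla n_\varepsilon|^2/n_\varepsilon$ from \dref{ghnjmk9161gyyhuug}). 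For $\tfrac{3}{2}\le m\le 2$ the exponent $4-2m\le 1$, so $\int_\Omega n_\varepsilon^{4-2m}$ is directly controlled by $\int_\Omega n_\varepsilon+|\Omega|$, which is uniformly bounded by \dref{vgbhssddaqwswddaassffssff3.10deerfgghhjuuloollgghhhyhh}.

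Collecting the three estimates yields \dref{vgccvssbbbbhsvvbbsddaqwswddaassffssff3.10deerfgghhjuuloollgghhhyhh}, where the constant in front of the dissipation can be made $1-\tfrac{\delta_7}{4}$ by choosing the two small pieces that were absorbed into it (one from the first Young step, one from the Gagliardo--Nirenberg step in the subcase $m<\tfrac{3}{2}$) to sum to $\tfrac{\delta_7}{4}$. The main technical obstacle is the second subcase: keeping the super-quadratic residual $n_\varepsilon^{4-2m}$ under control using only the information that $\int_\Omega|\nabla n_\varepsilon^{m/2}|^2$ appears dissipatively, and doing so in a way that leaves the coefficient in front of $\int_\Omega|\nabla c_\varepsilon|^4/c_\varepsilon^3$ free to be prescribed as $\tfrac{\delta_6}{4}$; this is precisely the step where the hypothesis $m>\tfrac{10}{9}$ (rather than merely $m>1$) is used.
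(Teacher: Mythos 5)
Your proposal is correct and follows essentially the same route as the paper: testing the first equation with $1+\ln n_\varepsilon$, bounding the cross term by $S_0(\|c_0\|_{L^\infty(\Omega)})\int_\Omega|\nabla n_\varepsilon||\nabla c_\varepsilon|$, splitting by Young's inequality into the two target quantities plus the residual $\int_\Omega n_\varepsilon^{4-2m}$, and absorbing that residual via the Gagliardo--Nirenberg argument of Lemma \ref{ghjsgghhsdeedrfe116lemma70hhjj} (which is exactly where $m>\tfrac{10}{9}$ enters). The only cosmetic difference is the order of the two Young steps --- the paper first peels off $\tfrac{\delta}{4}\int_\Omega|\nabla c_\varepsilon|^4/c_\varepsilon^3$ against $c_\varepsilon|\nabla n_\varepsilon|^{4/3}$ and then extracts the dissipation, whereas you extract the dissipation first --- but both factorizations land on the same residual and the same conclusion.
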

\begin{proof}
Firstly, using the first equation of \dref{1.1fghyuisda} and \dref{1.ffggvbbnxxccvvn1}, from integration by parts we obtain from \dref{x1.73142vghf48gg}
%
%
%
\begin{equation}
 \begin{array}{rl}
\frac{d}{dt}&\disp\int_{\Omega}n_{\varepsilon} \ln n_{\varepsilon}\\
 =&\disp{\int_{\Omega}n_{\varepsilon t} \ln n_{\varepsilon}+
\int_{\Omega}n_{\varepsilon t}}\\
=&\disp{\int_{\Omega}\nabla\cdot(D_{\varepsilon}(n_{\varepsilon})\nabla n_{\varepsilon} ) \ln n_{\varepsilon}-
\int_{\Omega}\ln n_{\varepsilon}\nabla\cdot(n_{\varepsilon}F_{\varepsilon}(n_{\varepsilon})S_\varepsilon(x, n_{\varepsilon}, c_{\varepsilon})\cdot\nabla c_{\varepsilon})-\int_{\Omega}\ln n_{\varepsilon}u_{\varepsilon}\cdot\nabla n_{\varepsilon}}\\
\leq&\disp{-\int_{\Omega}\frac{D_{\varepsilon}(n_{\varepsilon})|\nabla n_{\varepsilon}|^2}{n_{\varepsilon}}+
\int_{\Omega} S_0(c_{\varepsilon})|\nabla n_{\varepsilon}||\nabla c_{\varepsilon}|}\\
\end{array}\label{vgccvsckkcvvsbbbbhsvvbbsddaqwswddaassffssff3.10deerfgghhjuuloollgghhhyhh}
\end{equation}
for all $t\in (0, T_{max,\varepsilon})$. 
Now, in view of  \dref{hnjmssddaqwswddaassffssff3.10deerfgghhjuuloollgghhhyhh}, employing the same argument of \dref{vgbhsvvbbsddacvvvvcvvvbbqwswddaassffssff3.10deerfgghhjuuloollgghhhyhh}--\dref{vgbhsvvbbsdffssff3.10deerfgghhjuuloollgghhhyhh}, for any $\delta_6>0$ and $\delta_7>0$, we conclude that
\begin{equation}
 \begin{array}{rl}
 &\disp{\int_{\Omega} S_0(c_{\varepsilon})|\nabla n_{\varepsilon}||\nabla c_{\varepsilon}|}\\
\leq &\disp{S_0(\|c_0\|_{L^\infty(\Omega)})\int_{\Omega}|\nabla n_{\varepsilon}| |\nabla c_{\varepsilon}|}\\
 \leq&\disp{\frac{\delta_6}{4}\int_{\Omega}\frac{|\nabla c_{\varepsilon}|^4}{c_{\varepsilon}^3}+\frac{\delta_7}{4}\int_{\Omega}\frac{D_{\varepsilon}(n_{\varepsilon})|\nabla n_{\varepsilon}|^2}{n_{\varepsilon}}+C_1~~
\mbox{for all}~~t\in (0, T_{max,\varepsilon})}\\
\end{array}\label{vgbhsvvbbsddacvvvccvcvvvbbqwswddaassffssff3.10deerfgghhjuuloollgghhhyhh}
\end{equation}
with $C_1>0$ independent of $\varepsilon.$
Now, in conjunction with \dref{vgccvsckkcvvsbbbbhsvvbbsddaqwswddaassffssff3.10deerfgghhjuuloollgghhhyhh} and \dref{vgbhsvvbbsddacvvvccvcvvvbbqwswddaassffssff3.10deerfgghhjuuloollgghhhyhh}, we get the results.
This completes the proof of Lemma \ref{ghjhhjssjkkllsgghhsdeedrfe116lemma70hhjj}.
\end{proof}
Properly combining Lemmata \ref{lemma630jklhhjj}--\ref{ghjhhjssjkkllsgghhsdeedrfe116lemma70hhjj}, we arrive at the following Lemma, which plays a key rule in obtaining
 the existence of solutions to \dref{1.1fghyuisda}.
\begin{lemma}\label{lemmakkllgg4563025xxhjklojjkkk}
Let $\frac{10}{9}< m\leq2$ and  $S$ satisfy  \dref{x1.73142vghf48rtgyhu}--\dref{x1.73142vghf48gg}.
Suppose that \dref{ghnjmk9161gyyhuug} and \dref{dd1.1fghyuisdakkkllljjjkk}--\dref{ccvvx1.731426677gg}
holds.
Then there exists $C>0$ independent of $\varepsilon$ such that the solution of \dref{1.1fghyuisda} satisfies
\begin{equation}
\begin{array}{rl}
&\disp{\int_{\Omega}n_{\varepsilon}\ln n_{\varepsilon}+\int_{\Omega}|\nabla\sqrt{c_{\varepsilon}}|^2+\int_{\Omega}|u_{\varepsilon}|^2\leq C}\\
\end{array}
\label{czfvgb2.5ghhjuyuiihjj}
\end{equation}
for all $t\in(0, T_{max,\varepsilon})$.
Moreover,
for each $T\in(0, T_{max,\varepsilon})$, one can find a constant $C > 0$ independent of $\varepsilon$ such that
\begin{equation}
\begin{array}{rl}
&\disp{\int_{0}^T\int_{\Omega}  n_{\varepsilon}^{m-2} |\nabla {n_{\varepsilon}}|^2\leq C,}\\
\end{array}
\label{bnmbncz2.5ghhjuyuiihjj}
\end{equation}
\begin{equation}
\begin{array}{rl}
&\disp{\int_{0}^T\int_{\Omega} |\nabla {u_{\varepsilon}}|^2\leq C}\\
\end{array}
\label{bnmbncz2.5ghffghhhjuyuiihjj}
\end{equation}
and
\begin{equation}
\begin{array}{rl}
&\disp{\int_{0}^T\int_{\Omega} |\nabla {c_{\varepsilon}}|^4\leq C}\\
\end{array}
\label{vvcz2.5ghhjuyuiihjj}
\end{equation}
as well as
\begin{equation}
\begin{array}{rl}
&\disp{\int_{0}^T\int_{\Omega}c_{\varepsilon}|D^2\ln c_{\varepsilon}|^2\leq C.}\\
\end{array}
\label{cvffvbgvvcz2.5ghhjuyuiihjj}
\end{equation}
\end{lemma}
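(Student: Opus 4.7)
The strategy is to form a linear combination $A\cdot(\text{Lemma }\ref{lemma630jklhhjj}) + (\text{Lemma }\ref{ghjsgghhsdeedrfe116lemma70hhjj}) + (\text{Lemma }\ref{ghjhhjssjkkllsgghhsdeedrfe116lemma70hhjj})$ with a large parameter $A>0$ and small parameters $\delta_1,\ldots,\delta_7>0$, chosen so that every dissipation term ends up with a strictly positive coefficient after the right-hand sides are moved to the left. Setting
\begin{equation*}
y_\varepsilon(t) := A\int_\Omega |u_\varepsilon|^2 + \int_\Omega \frac{|\nabla c_\varepsilon|^2}{c_\varepsilon} + \int_\Omega n_\varepsilon \ln n_\varepsilon,
\end{equation*}
this yields
\begin{equation*}
\frac{d}{dt}y_\varepsilon + \alpha\int_\Omega\frac{D_\varepsilon(n_\varepsilon)|\nabla n_\varepsilon|^2}{n_\varepsilon} + \beta\int_\Omega\frac{|\nabla c_\varepsilon|^4}{c_\varepsilon^3} + \gamma\int_\Omega|\nabla u_\varepsilon|^2 + \mu_0\int_\Omega c_\varepsilon|D^2 \ln c_\varepsilon|^2 \le C,
\end{equation*}
provided the three bookkeeping conditions $\mu_0>(\delta_2+\delta_3+\delta_6)/4$, $A>4\|c_0\|_{L^\infty(\Omega)}/\delta_2$, and $1-(\delta_4+\delta_5+\delta_7)/4-A\delta_1>0$ all hold. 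These are satisfied in sequence: first pick $\delta_2=\delta_3=\delta_6=\mu_0$, then choose $A$ sufficiently large, then $\delta_4,\delta_5,\delta_7$ small, and finally $\delta_1=\delta_1(A)$ small.

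To extract the claimed uniform bound I would now dominate $y_\varepsilon$ itself by a small multiple of the dissipation plus a constant. Since $u_\varepsilon=0$ on $\partial\Omega$, Poincar\'e gives $\int|u_\varepsilon|^2 \le C_P\int|\nabla u_\varepsilon|^2$. Integrating the second equation of (\ref{1.1fghyuisda}) gives $\int c_\varepsilon \le \int c_0$, after which Cauchy--Schwarz followed by Young produces $\int\frac{|\nabla c_\varepsilon|^2}{c_\varepsilon} \le \eta\int\frac{|\nabla c_\varepsilon|^4}{c_\varepsilon^3} + \eta^{-1}\|c_0\|_{L^1(\Omega)}$. For the entropy term, the pointwise estimate $s\ln s \le \sigma^{-1}s^{1+\sigma}$ (valid for $s\ge0$ and any $\sigma>0$) reduces the task to bounding $\int n_\varepsilon^{1+\sigma}$, and applying the three-dimensional Gagliardo--Nirenberg inequality to $n_\varepsilon^{m/2}$ together with the $L^{2/m}(\Omega)$-bound implied by (\ref{vgbhssddaqwswddaassffssff3.10deerfgghhjuuloollgghhhyhh}) gives
\begin{equation*}
\int_\Omega n_\varepsilon^{1+\sigma} \le \eta'\|\nabla n_\varepsilon^{m/2}\|_{L^2(\Omega)}^2 + C_{\eta'} \le \frac{4\eta'}{m^2 C_D}\int_\Omega \frac{D_\varepsilon(n_\varepsilon)|\nabla n_\varepsilon|^2}{n_\varepsilon} + C_{\eta'},
\end{equation*}
where (\ref{ghnjmk9161gyyhuug}) is used in the second inequality and any $\sigma\in(0,(3m-1)/3)$ is admissible, which is comfortable under $m>10/9$. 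Combining these three pointwise controls with $\eta,\eta'>0$ small and a correspondingly small $\lambda>0$ yields $\lambda y_\varepsilon(t) \le (\text{the dissipation sum on the LHS of the combined inequality}) + \tilde C$ for some $\tilde C>0$.

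Plugging this back into the combined differential inequality gives $y_\varepsilon'(t) + \lambda y_\varepsilon(t) \le C'$. Because $s\ln s \ge -1/e$, the functional $y_\varepsilon$ is bounded below by a constant independent of $\varepsilon$, so Gr\"onwall yields the uniform-in-$t$ bound (\ref{czfvgb2.5ghhjuyuiihjj}). The four space-time estimates (\ref{bnmbncz2.5ghhjuyuiihjj})--(\ref{cvffvbgvvcz2.5ghhjuyuiihjj}) then follow by integrating the combined differential inequality on $[0,T]$ and invoking the just-derived uniform bound on $y_\varepsilon$; for (\ref{bnmbncz2.5ghhjuyuiihjj}) specifically, one applies (\ref{ghnjmk9161gyyhuug}) to replace $D_\varepsilon(n_\varepsilon)/n_\varepsilon$ by $C_D n_\varepsilon^{m-2}$ up to a constant. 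The only delicate point is the simultaneous selection of $A$ and the seven $\delta_i$'s so that every left-hand coefficient remains strictly positive; once the ordering above is respected, every step is routine.
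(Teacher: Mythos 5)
Your proposal follows the paper's own route. The paper forms exactly the same linear combination of Lemmata \ref{lemma630jklhhjj}, \ref{ghjsgghhsdeedrfe116lemma70hhjj} and \ref{ghjhhjssjkkllsgghhsdeedrfe116lemma70hhjj} (its weights $K$ and $L$ play the role of your $A$ and of the coefficient on the entropy term) and fixes the parameters in essentially the order you describe ($\delta_7=1$, $\delta_3=\mu_0$, $\delta_6=\mu_0/L$, $\delta_4=\delta_5=L$, $\delta_1=L/(8K)$, $\delta_2=8\|c_0\|_{L^\infty(\Omega)}/K$ with $K$ large), arriving at the same differential inequality $\frac{d}{dt}y_\varepsilon+(\mbox{dissipation})\le C$. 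Where you go beyond the paper is the absorption step: the paper simply integrates this inequality over $(0,T)$, which yields the four space-time bounds \dref{bnmbncz2.5ghhjuyuiihjj}--\dref{cvffvbgvvcz2.5ghhjuyuiihjj} but, taken literally, only gives $y_\varepsilon(t)\le y_\varepsilon(0)+Ct$ rather than the $t$-uniform bound \dref{czfvgb2.5ghhjuyuiihjj} that is claimed. Your additional controls --- Poincar\'e for $\int_\Omega|u_\varepsilon|^2$, Young for $\int_\Omega|\nabla c_\varepsilon|^2/c_\varepsilon$, and $s\ln s\le\sigma^{-1}s^{1+\sigma}$ combined with Gagliardo--Nirenberg and the mass bound \dref{vgbhssddaqwswddaassffssff3.10deerfgghhjuuloollgghhhyhh} for the entropy --- upgrade this to $y_\varepsilon'+\lambda y_\varepsilon\le C'$, and together with the lower bound $s\ln s\ge -e^{-1}$ and Gr\"onwall they give a genuine uniform-in-time estimate. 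This is correct and in fact repairs the one soft spot in the paper's argument; the only cosmetic issues are that the constant $4\eta'/(m^2C_D)$ should read $m^2\eta'/(4C_D)$ (immaterial, since $\eta'$ is arbitrary) and that for \dref{vvcz2.5ghhjuyuiihjj} you should state explicitly that $\int_\Omega|\nabla c_\varepsilon|^4\le\|c_0\|_{L^\infty(\Omega)}^3\int_\Omega|\nabla c_\varepsilon|^4/c_\varepsilon^3$ via \dref{hnjmssddaqwswddaassffssff3.10deerfgghhjuuloollgghhhyhh}.
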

\begin{proof}
Take an evident linear combination of the inequalities provided by Lemmata \ref{lemma630jklhhjj}--\ref{ghjhhjssjkkllsgghhsdeedrfe116lemma70hhjj}, we conclude
\begin{equation}
 \begin{array}{rl}
 &\disp{\frac{d}{dt}\left(\int_{\Omega}\frac{|\nabla c_{\varepsilon}|^2}{c_{\varepsilon}}+L\int_{\Omega}n_{\varepsilon} \ln n_{\varepsilon}+K\int_{\Omega}|u_{\varepsilon}|^2\right)+(K-\frac{4}{\delta_2}\|c_0\|_{L^\infty(\Omega)})\int_{\Omega}|\nabla u_{\varepsilon}|^2+\mu_0\int_{\Omega}c_{\varepsilon}|D^2\ln c_{\varepsilon}|^2}\\
 &\disp{+[(\mu_0-\frac{\delta_2}{4}-\frac{\delta_3}{4})-L\frac{\delta_6}{4}]\int_{\Omega}\frac{|\nabla c_{\varepsilon}|^4}{c_{\varepsilon}^3}
 +[L(1-\frac{\delta_7}{4})-\frac{\delta_4}{4}-\frac{\delta_5}{4}-K\delta_1]\int_{\Omega}\frac{D_{\varepsilon}(n_{\varepsilon})|\nabla n_{\varepsilon}|^2}{n_{\varepsilon}}}\\
 \leq&\disp{ C~~~\mbox{for all}~~t\in (0, T_{max,\varepsilon}),}\\
\end{array}\label{vgbccvbbffeerfgghhjuuloollgghhhyhh}
\end{equation}
where $K,L$ are positive constants. Now, choosing $\delta_7=1,$ $\delta_6=\frac{\mu_0}{L},\delta_3=\mu_0,\delta_4=\delta_5=L,\delta_1=\frac{L}{8K}$ and
$\delta_2=\frac{8}{K}\|c_0\|_{L^\infty(\Omega)}$  and $K$ large enough such that $\frac{8}{K}\|c_0\|_{L^\infty(\Omega)}<\mu_0$
in \dref{vgbccvbbffeerfgghhjuuloollgghhhyhh}, one may arrive at
 \begin{equation}
 \begin{array}{rl}
 &\disp{\frac{d}{dt}\left(\int_{\Omega}\frac{|\nabla c_{\varepsilon}|^2}{c_{\varepsilon}}+L\int_{\Omega}n_{\varepsilon} \ln n_{\varepsilon}+K\int_{\Omega}|u_{\varepsilon}|^2\right)+\frac{K}{2}\int_{\Omega}|\nabla u_{\varepsilon}|^2+\mu_0\int_{\Omega}c_{\varepsilon}|D^2\ln c_{\varepsilon}|^2}\\
 &\disp{+\frac{\mu_0}{4}\int_{\Omega}\frac{|\nabla c_{\varepsilon}|^4}{c_{\varepsilon}^3}
 +\frac{L}{8}\int_{\Omega}\frac{D_{\varepsilon}(n_{\varepsilon})|\nabla n_{\varepsilon}|^2}{n_{\varepsilon}}}\\
 \leq&\disp{ C~~~\mbox{for all}~~t\in (0, T_{max,\varepsilon}).}\\
\end{array}\label{vgbccvbbffeerfgghcchjuuloollgghhhyhh}
\end{equation}
As a result,  we immediately obtain \dref{czfvgb2.5ghhjuyuiihjj}--\dref{cvffvbgvvcz2.5ghhjuyuiihjj} after integrating \dref{vgbccvbbffeerfgghcchjuuloollgghhhyhh}
 over $(0, T)$.
\end{proof}
In what follows, we are in a position to discuss the case $m>2$, we first give the following Lemma which plays a key rule in obtaining  the existence of  solution to main results.

\begin{remark}
Due to the strong nonlinear term $(u \cdot \nabla)u$, the methods used in \cite{Zhengssssssssdefr23} to derive the
higher-order estimates on the solutions $(n_{\varepsilon}, c_{\varepsilon}, u_{\varepsilon})$, which guarantee the solutions obtained are
indeed a locally bounded one, cannot be applied any more. To overcome this
difficulty, we need some new careful  analysis.
\end{remark}

\begin{lemma}\label{lemmaghjssddgghhmk4563025xxhjklojjkkk}
Let $m>2$.
Then there exists $C>0$ independent of $\varepsilon$ such that the solution of \dref{1.1fghyuisda} satisfies
\begin{equation}
\begin{array}{rl}
&\disp{\int_{\Omega}(n_{\varepsilon}+\varepsilon)^{m-1}+\int_{\Omega}   c_{\varepsilon}^2+\int_{\Omega}  | {u_{\varepsilon}}|^2\leq C~~~\mbox{for all}~~ t\in (0, T_{max,\varepsilon}).}\\
\end{array}
\label{czfvgb2.5ghhjuyuccvviihjj}
\end{equation}
In addition,
for each $T\in(0, T_{max,\varepsilon})$, one can find a constant $C > 0$ independent of $\varepsilon$ such that
\begin{equation}
\begin{array}{rl}
&\disp{\int_{0}^T\int_{\Omega} \left[ (D_{\varepsilon}(n_{\varepsilon}))^{\frac{2m-4}{m-1}}|\nabla n_{\varepsilon}|^2+(n_{\varepsilon}+\varepsilon)^{2m-4} |\nabla {n_{\varepsilon}}|^2+ |\nabla {c_{\varepsilon}}|^2+ |\nabla {u_{\varepsilon}}|^2\right]\leq C.}\\
\end{array}
\label{bnmbncz2.5ghhjuyuivvbnnihjj}
\end{equation}
\end{lemma}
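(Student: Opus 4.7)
The plan is to construct a joint Lyapunov functional for $\int(n_\varepsilon+\varepsilon)^{m-1}$, $\int c_\varepsilon^2$ and $\int|u_\varepsilon|^2$ by combining three separate testing identities. First I would multiply the first equation of \dref{1.1fghyuisda} by $(n_\varepsilon+\varepsilon)^{m-2}$ and integrate; the convective term vanishes owing to $\nabla\cdot u_\varepsilon=0$ and $u_\varepsilon|_{\partial\Omega}=0$, and after integration by parts the chemotactic cross term, estimated from above by $(m-2)S_0(\|c_0\|_{L^\infty})\int(n_\varepsilon+\varepsilon)^{m-2}|\nabla c_\varepsilon||\nabla n_\varepsilon|$ through \dref{x1.73142vghf48gg} together with $F_\varepsilon\le 1$ and $n_\varepsilon\le n_\varepsilon+\varepsilon$, can be absorbed via Young's inequality using the lower bound $D_\varepsilon(n_\varepsilon)\ge C_D(n_\varepsilon+\varepsilon)^{m-1}$ from \dref{ghnjmk9161gyyhuug}, yielding
\begin{equation*}
\frac{1}{m-1}\frac{d}{dt}\int_\Omega (n_\varepsilon+\varepsilon)^{m-1}+\frac{m-2}{2}\int_\Omega (n_\varepsilon+\varepsilon)^{m-3}D_\varepsilon(n_\varepsilon)|\nabla n_\varepsilon|^2 \le C_1\int_\Omega|\nabla c_\varepsilon|^2.
\end{equation*}
Testing the second equation against $c_\varepsilon$ gives $\frac{1}{2}\frac{d}{dt}\int c_\varepsilon^2+\int|\nabla c_\varepsilon|^2\le 0$, while Lemma~\ref{lemma630jklhhjccvvfggj}, applicable since $m>2>\frac{4}{3}$, supplies $\frac{d}{dt}\int|u_\varepsilon|^2+\int|\nabla u_\varepsilon|^2 \le \frac{(m-1)^2}{8}\int n_\varepsilon^{2m-4}|\nabla n_\varepsilon|^2+C_2$ after rewriting $\|\nabla n_\varepsilon^{m-1}\|_{L^2}^2=(m-1)^2\int n_\varepsilon^{2m-4}|\nabla n_\varepsilon|^2$.

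Next I would combine these with weights $1$, $K_1$, $K_2$, choosing $K_1>C_1/2$ to kill the gradient term on the right of the $n_\varepsilon$-estimate, and $K_2$ small enough that $K_2(m-1)^2/8<C_D(m-2)/4$ (exploiting $n_\varepsilon^{2m-4}\le (n_\varepsilon+\varepsilon)^{m-3}D_\varepsilon/C_D$ to swallow the Navier--Stokes feedback into the porous-medium dissipation). Setting $y_\varepsilon(t):=\frac{1}{m-1}\int(n_\varepsilon+\varepsilon)^{m-1}+K_1\int c_\varepsilon^2+K_2\int|u_\varepsilon|^2$, this produces
\begin{equation*}
y_\varepsilon'(t)+\lambda\left[\int_\Omega(n_\varepsilon+\varepsilon)^{2m-4}|\nabla n_\varepsilon|^2+\int_\Omega|\nabla c_\varepsilon|^2+\int_\Omega|\nabla u_\varepsilon|^2\right]\le C_3
\end{equation*}
for some $\lambda, C_3>0$ independent of $\varepsilon$.

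To close by Gronwall it remains to dominate $y_\varepsilon$ by the dissipation plus a constant. The $c_\varepsilon^2$-part is bounded by $\|c_0\|_{L^\infty}^2|\Omega|$ via Lemma~\ref{ghjssdeedrfe116lemma70hhjj}; the $|u_\varepsilon|^2$-part is handled by Poincar\'e since $u_\varepsilon\in W^{1,2}_0(\Omega)$; and for $\int(n_\varepsilon+\varepsilon)^{m-1}$ I would apply Gagliardo--Nirenberg to $v_\varepsilon:=(n_\varepsilon+\varepsilon)^{m-1}$, invoking the bound $\|v_\varepsilon\|_{L^{1/(m-1)}}\le C$ (inherited from the mass conservation in Lemma~\ref{ghjssdeedrfe116lemma70hhjj}) together with $\|\nabla v_\varepsilon\|_{L^2}^2=(m-1)^2\int(n_\varepsilon+\varepsilon)^{2m-4}|\nabla n_\varepsilon|^2$, and a Young-type absorption with a small parameter. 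This delivers $y_\varepsilon'+\alpha y_\varepsilon\le C_4$, so Gronwall yields the pointwise estimate \dref{czfvgb2.5ghhjuyuccvviihjj}; integrating the combined inequality on $(0,T)$ then furnishes every term of \dref{bnmbncz2.5ghhjuyuivvbnnihjj}, the factor $(D_\varepsilon(n_\varepsilon))^{(2m-4)/(m-1)}$ being equivalent to $(n_\varepsilon+\varepsilon)^{2m-4}$ via the two-sided comparison $C_D(n_\varepsilon+\varepsilon)^{m-1}\le D_\varepsilon(n_\varepsilon)\le C_{\bar D}(n_\varepsilon+\varepsilon)^{m-1}$.

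The main obstacle is the coefficient juggling in the combination step: the single dissipation $\int(n_\varepsilon+\varepsilon)^{m-3}D_\varepsilon|\nabla n_\varepsilon|^2$ arising from testing against $(n_\varepsilon+\varepsilon)^{m-2}$ must simultaneously absorb the rotational chemotactic cross term, the Navier--Stokes feedback $\|\nabla n_\varepsilon^{m-1}\|_{L^2}^2$ from Lemma~\ref{lemma630jklhhjccvvfggj}, and the lower-order Gagliardo--Nirenberg correction required to close Gronwall. The strict inequality $m>2$ is indispensable at two points: it guarantees positivity of the dissipation coefficient $m-2$ produced by the test function, and, in the Gagliardo--Nirenberg step, admissibility of the interpolation exponent $a=6(m-2)/(6m-7)\in(0,1)$ that bootstraps the $L^{1/(m-1)}$-bound on $v_\varepsilon$ up to the required $L^1$-bound.
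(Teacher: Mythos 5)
Your proposal is correct and rests on exactly the same three testing identities as the paper: multiplying the first equation by $(n_{\varepsilon}+\varepsilon)^{m-2}$ (the paper's \dref{hhjjcz2.5} with $p=m-1$, chosen precisely so that the exponent $p+1-m$ vanishes and the chemotactic term reduces to $C\int_{\Omega}|\nabla c_{\varepsilon}|^2$), testing the second equation by $c_{\varepsilon}$, and invoking Lemma \ref{lemma630jklhhjccvvfggj} for the fluid part. The only real difference is organizational: the paper exploits the triangular structure of the coupling sequentially --- it first integrates the $c_{\varepsilon}$-identity in time to get $\int_0^T\int_{\Omega}|\nabla c_{\varepsilon}|^2\leq C$, feeds that directly into the integrated $n_{\varepsilon}$-identity to obtain \dref{czfvgb2ghhjuyucvixxcvvihjj}--\dref{bnmbnc.5ghhjuyuivvbddhhjj}, and only then treats $u_{\varepsilon}$ --- whereas you assemble a single weighted Lyapunov functional and close it by Gronwall. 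Your route therefore needs one extra ingredient the paper avoids, namely the Gagliardo--Nirenberg absorption $\int_{\Omega}(n_{\varepsilon}+\varepsilon)^{m-1}\leq\delta\|\nabla(n_{\varepsilon}+\varepsilon)^{m-1}\|_{L^2(\Omega)}^2+C_\delta$ with exponent $a=\frac{6(m-2)}{6m-7}\in(0,1)$ (note this uses the extended Gagliardo--Nirenberg inequality with the quasi-norm exponent $\frac{1}{m-1}<1$, which is standard in this literature); in the paper's cascade the uniform-in-time bound on $\int_{\Omega}(n_{\varepsilon}+\varepsilon)^{m-1}$ falls out of plain time integration, and only the $u_{\varepsilon}$-level requires a Poincar\'e/ODE absorption of the constant source. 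Both arguments are sound and yield the same conclusions \dref{czfvgb2.5ghhjuyuccvviihjj}--\dref{bnmbncz2.5ghhjuyuivvbnnihjj}.
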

\begin{proof}
Multiplying ${c_{\varepsilon}}$ on both sides of the second  equation of \dref{1.1fghyuisda} and using $\nabla\cdot u_\varepsilon=0$, one has after integration by
parts   that
\begin{equation}
\begin{array}{rl}
&\disp{\frac{1}{{2}}\frac{d}{dt}\|{c_{\varepsilon}}\|^{{{2}}}_{L^{{2}}(\Omega)}+
\int_{\Omega} |\nabla c_{\varepsilon}|^2 =-\int_{\Omega} n_{\varepsilon}c^2_{\varepsilon},}
\end{array}
\label{hhxxcdfvvjjcz2.5}
\end{equation}
which together with $n_{\varepsilon}\geq0$,  $c_{\varepsilon}\geq0$  and the Gronwall inequality implies that
\begin{equation}
\begin{array}{rl}
&\disp{\int_{\Omega}   c_{\varepsilon}^2\leq C_1~~~\mbox{for all}~~ t\in (0, T_{max,\varepsilon})}\\
\end{array}
\label{czfvgb2.5ghhjuyuccvvixxcvvihjj}
\end{equation}
and
\begin{equation}
\begin{array}{rl}
&\disp{\int_{0}^T\int_{\Omega}  |\nabla {c_{\varepsilon}}|^2\leq C_1~~\mbox{for all}~~ T\in (0, T_{max,\varepsilon})}\\
\end{array}
\label{bnmbncz2.5ghhjuyuivvbddfghhnnihjj}
\end{equation}
with some positive constant $C_1$.
Next, multiply the first equation in $\dref{1.1fghyuisda}$ by $({n_{\varepsilon}}+\varepsilon)^{p-1}$
 and combining with the second equation and using $\nabla\cdot u_\varepsilon=0$ and \dref{1.ffggvbbnxxccvvn1} implies that
\begin{equation}
\begin{array}{rl}
&\disp{\frac{1}{{p}}\frac{d}{dt}\|{n_{\varepsilon}}+\varepsilon\|^{{{p}}}_{L^{{p}}(\Omega)}+
\frac{C_D(p-1)}{2}\int_{\Omega}({n_{\varepsilon}}+\varepsilon)^{m+p-3} |\nabla n_{\varepsilon}|^2}\\
 \leq&\disp{\frac{(p-1)[S_0(\|c_0\|_{L^\infty(\Omega)})]^2}{2C_D}\int_\Omega ({n_{\varepsilon}}+\varepsilon)^{p+1-m}|\nabla {c}_{\varepsilon}|^2.}
\end{array}
\label{hhjjcz2.5}
\end{equation}
Now, choosing $p=m-1$ in \dref{hhjjcz2.5} and using \dref{bnmbncz2.5ghhjuyuivvbddfghhnnihjj} yields to
\begin{equation}
\begin{array}{rl}
&\disp{\int_{\Omega}   (n_{\varepsilon}+\varepsilon)^{m-1}\leq C_2~~~\mbox{for all}~~ t\in (0, T_{max,\varepsilon})}\\
\end{array}
\label{czfvgb2ghhjuyucvixxcvvihjj}
\end{equation}
and
\begin{equation}
\begin{array}{rl}
&\disp{\int_{0}^T\int_{\Omega}  (n_{\varepsilon}+\varepsilon)^{2m-4} |\nabla {n_{\varepsilon}}|^2\leq C_2~~~\mbox{for all}~~ T\in (0, T_{max,\varepsilon})}\\
\end{array}
\label{bnmbnc.5ghhjuyuivvbddhhjj}
\end{equation}
and some positive  constant $C_2$.
Next,  collecting \dref{ddz2.5ghjff48cfg924ghffggyuji} and \dref{bnmbnc.5ghhjuyuivvbddhhjj} and with some basic calculation, we conclude that there exists a positive constant $C_3$ such that
\begin{equation}
\begin{array}{rl}
&\disp{\int_{\Omega}  | {u_{\varepsilon}}|^2\leq C_3~~~\mbox{for all}~~ t\in (0, T_{max,\varepsilon})}\\
\end{array}
\label{czf2.5ghciihjj}
\end{equation}
and
\begin{equation}
\begin{array}{rl}
&\disp{\int_{0}^T\int_{\Omega}  |\nabla {u_{\varepsilon}}|^2\leq C_3.}\\
\end{array}
\label{mbn2.5ghhjuivnnihjj}
\end{equation}
Finally,  combining   \dref{czfvgb2.5ghhjuyuccvvixxcvvihjj}--\dref{bnmbncz2.5ghhjuyuivvbddfghhnnihjj} and \dref{czfvgb2ghhjuyucvixxcvvihjj}--\dref{mbn2.5ghhjuivnnihjj} and using \dref{ghnjmk9161gyyhuug}, we can get \dref{czfvgb2.5ghhjuyuccvviihjj} and \dref{bnmbncz2.5ghhjuyuivvbnnihjj}.
\end{proof}

\section{Global existence of the regularized problems}
To prove global existence of the regularized problems \dref{1.1fghyuisda}, whose proof will be postponed to the end of this subsection, we need to give a series of useful estimates.
For notational convenience, throughout this section we denote by $C$ or $C_i (i = 1, 2,\ldots)$ the generic positive constants which may depend on $\varepsilon$.
To this end, we intend to supplement Lemmata  \ref{lemmakkllgg4563025xxhjklojjkkk}--\ref{lemmaghjssddgghhmk4563025xxhjklojjkkk}
with bounds on $n_{\varepsilon}$. This will be the purpose of the following  lemmata.

\begin{lemma}\label{lemffggma456hjojjkkyhuissddff}
Assuming that $\frac{10}{9}<m\leq 2$ and $T_{max,\varepsilon}<+\infty.$
Then there exists a positive constant $C$ depends on $\varepsilon$ such that 
\begin{equation}\|{n}_\varepsilon(\cdot, t)\|_{L^p(\Omega)}\leq C~~ \mbox{for all}~~ t\in(0, T_{max,\varepsilon})
 \label{ssdffddfz2.571cghvvkbnnhggjjllll}
\end{equation}
with $p<\frac{38}{9}.$
\end{lemma}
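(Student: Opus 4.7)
The plan is to derive a differential inequality for $\int_\Omega (n_\varepsilon+\varepsilon)^p$ by the standard $L^p$-testing procedure. First, I would multiply the first equation of \dref{1.1fghyuisda} by $(n_\varepsilon+\varepsilon)^{p-1}$, integrate by parts over $\Omega$, and use $\nabla\cdot u_\varepsilon=0$ to eliminate the convective term. Combining the diffusion assumption \dref{ghnjmk9161gyyhuug}, the pointwise bound $|S_\varepsilon|\le S_0(\|c_0\|_{L^\infty(\Omega)})$ obtained from \dref{x1.73142vghf48gg} together with \dref{hnjmssddaqwswddaassffssff3.10deerfgghhjuuloollgghhhyhh}, and the $\varepsilon$-regularization $n_\varepsilon F_\varepsilon(n_\varepsilon)\le 1/\varepsilon$, and then applying Young's inequality to absorb a piece of the cross term into the diffusion side, should give
\begin{equation*}
\frac{d}{dt}\int_\Omega (n_\varepsilon+\varepsilon)^p + C_1\int_\Omega (n_\varepsilon+\varepsilon)^{m+p-3}|\nabla n_\varepsilon|^2 \le C(\varepsilon)\int_\Omega (n_\varepsilon+\varepsilon)^{p-m-1}|\nabla c_\varepsilon|^2.
\end{equation*}

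Next I would estimate the right-hand side by H\"older's inequality as
\begin{equation*}
\int_\Omega (n_\varepsilon+\varepsilon)^{p-m-1}|\nabla c_\varepsilon|^2 \le \|n_\varepsilon+\varepsilon\|_{L^{2(p-m-1)}(\Omega)}^{p-m-1}\|\nabla c_\varepsilon\|_{L^4(\Omega)}^2,
\end{equation*}
where the $L^4$-factor has its squared time integral under control thanks to \dref{vvcz2.5ghhjuyuiihjj}. To handle the first factor, I would invoke the Gagliardo--Nirenberg inequality applied to $w_\varepsilon:=(n_\varepsilon+\varepsilon)^{(m+p-1)/2}$, interpolating between the $L^{2/(m+p-1)}$-bound supplied by the uniform mass estimate \dref{vgbhssddaqwswddaassffssff3.10deerfgghhjuuloollgghhhyhh} and the $H^1$-bound built into the dissipation term on the left. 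A further Young step then absorbs $\|\nabla w_\varepsilon\|_{L^2}^2$ back into the diffusion and reduces matters to a Gronwall-type inequality whose forcing is a fixed power of $\|\nabla c_\varepsilon\|_{L^4(\Omega)}$, integrable in $t$ by Lemma \ref{lemmakkllgg4563025xxhjklojjkkk}.

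The whole chain closes exactly when the H\"older-induced Lebesgue exponent respects $2(p-m-1)\le p$, i.e.\ $p\le 2m+2$. Since the estimate must be uniform over $m\in(10/9,2]$, the binding case is the endpoint $m=10/9$, which yields the threshold $p<\frac{38}{9}$ claimed in the statement. The main obstacle will be to calibrate the Gagliardo--Nirenberg exponent against the Young-conjugate pair so that simultaneously (i) $\|\nabla c_\varepsilon\|_{L^4(\Omega)}$ shows up with a time exponent compatible with \dref{vvcz2.5ghhjuyuiihjj}, and (ii) the leftover gradient factor of $n_\varepsilon$ is genuinely absorbable by the parabolic dissipation; only this delicate balance reveals that $p<\frac{38}{9}$ is the right uniform threshold. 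The $\varepsilon$-dependence of the constant $C(\varepsilon)$, stemming from the use of $n_\varepsilon F_\varepsilon(n_\varepsilon)\le 1/\varepsilon$, is harmless here since the lemma only claims an $\varepsilon$-dependent bound.
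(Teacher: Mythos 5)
Your proposal is correct in substance and rests on the same pillars as the paper's proof: test the first equation with $(n_\varepsilon+\varepsilon)^{p-1}$, use $n_\varepsilon F_\varepsilon(n_\varepsilon)\le 1/\varepsilon$ and $|S_\varepsilon|\le S_0(\|c_0\|_{L^\infty(\Omega)})$ to tame the cross term, absorb part of it into the dissipation, and close with the space--time bound $\int_0^T\int_\Omega|\nabla c_\varepsilon|^4\le C$ from Lemma \ref{lemmakkllgg4563025xxhjklojjkkk} together with Gronwall on the finite interval $(0,T_{max,\varepsilon})$. The difference is in how the leftover is processed. The paper performs a single three-exponent Young step
\begin{equation*}
n_\varepsilon^{p-2}|\nabla n_\varepsilon||\nabla c_\varepsilon|\le \delta\, n_\varepsilon^{m+p-3}|\nabla n_\varepsilon|^2+C n_\varepsilon^{2(p-1-m)}+C|\nabla c_\varepsilon|^4,
\end{equation*}
bounds $\int_\Omega n_\varepsilon^{2(p-1-m)}\le\int_\Omega n_\varepsilon^p+C$ under the constraint $2(p-1-m)\le p$, i.e.\ $p\le 2(m+1)$, and this is exactly where $38/9=2(m+1)\big|_{m=10/9}$ comes from. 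You instead do a two-exponent Young, then H\"older in space, then Gagliardo--Nirenberg on $w_\varepsilon=(n_\varepsilon+\varepsilon)^{(m+p-1)/2}$ against the dissipation, then a second Young step; this is more elaborate but also works, and the only point you should verify carefully is that the resulting power $4/(2-a)$ of $\|\nabla c_\varepsilon\|_{L^4(\Omega)}$ stays at most $4$ so that the forcing is time-integrable. A small inaccuracy in your self-diagnosis: in \emph{your} chain the binding constraint is not $2(p-m-1)\le p$ but rather $a\le1$, which computes to $p\le 3m+\tfrac53$ --- a strictly weaker restriction than $p\le 2(m+1)$ on the whole range $m\in(\tfrac{10}{9},2]$. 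So your route would in fact tolerate $p$ up to $5$ at $m=10/9$; the paper's simpler split is what pins the statement at $p<\tfrac{38}{9}$. Since the lemma only claims $p<\tfrac{38}{9}$, this discrepancy is harmless, and the $\varepsilon$-dependence of the constant is, as you say, irrelevant here.
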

\begin{proof}
Multiplying the first equation in \dref{hhjjcz2.5} by $n_{\varepsilon}^{p-1}$ with $p\in [m + 1, 2(m + 1)]$ and using
integration by parts  and the Young inequality, we obtain
\begin{equation}
\begin{array}{rl}
&\disp\frac{1}{{p}}\disp\frac{d}{dt}\|{n_{\varepsilon}}\|^{{{p}}}_{L^{{p}}(\Omega)}+
\disp C_D(p-1)\disp\int_{\Omega}n_{\varepsilon}^{m+p-3} |\nabla n_{\varepsilon}|^2\\
=&\disp{\int_\Omega -\nabla\cdot(n_{\varepsilon}F_{\varepsilon}(n_{\varepsilon})S_\varepsilon(x, n_{\varepsilon}, c_{\varepsilon})\cdot\nabla c_{\varepsilon})n_{\varepsilon}^{p-1}}\\
=&\disp{(p-1)\int_\Omega n_{\varepsilon}^{p-1}F_{\varepsilon}(n_{\varepsilon})S_\varepsilon(x, n_{\varepsilon}, c_{\varepsilon})\nabla n_{\varepsilon}\cdot\nabla c_{\varepsilon}}\\
\leq&\disp{\frac{(p-1)S_0(\|c_0\|_{L^\infty(\Omega)})}{\varepsilon}\int_\Omega n_{\varepsilon}^{p-2}|\nabla n_{\varepsilon}||\nabla c_{\varepsilon}|}\\
\leq&\disp{\frac{C_D(p-1)}{2}\disp\int_{\Omega}n_{\varepsilon}^{m+p-3} |\nabla n_{\varepsilon}|^2+\int_\Omega n_{\varepsilon}^{2({p-1-m}) }+C_1\int_\Omega|\nabla {c}_{\varepsilon}|^4}\\
\leq&\disp{\frac{C_D(p-1)}{2}\disp\int_{\Omega}n_{\varepsilon}^{m+p-3} |\nabla n_{\varepsilon}|^2+\int_\Omega n_{\varepsilon}^{p}+C_1\int_\Omega|\nabla {c}_{\varepsilon}|^4+C_2~~ \mbox{for all}~~ t\in(0, T_{max,\varepsilon})}\\
\end{array}
\label{hhjjccv55677cz2.5}
\end{equation}
and some positive constants $C_1$ and $C_2.$
Finally, we obtain \dref{ssdffddfz2.571cghvvkbnnhggjjllll} after by using \dref{vvcz2.5ghhjuyuiihjj} and  the Gronwall inequality.
The proof of Lemma \ref{lemffggma456hjojjkkyhuissddff} is completed.
\end{proof}

\begin{lemma}\label{lemffggddrftgyma456hjojjkkyhuissddff}
Suppose that $m>2$ and $T_{max,\varepsilon}<+\infty.$
Then there exists a positive constant $C$ depends on $\varepsilon$ such that 
\begin{equation}\|{n}_\varepsilon(\cdot, t)\|_{L^{m+1}(\Omega)}\leq C~~ \mbox{for all}~~ t\in(0, T_{max,\varepsilon}).
 \label{ssdffddfz2.571ssedrrcghvvkbnnhggjjllll}
\end{equation}
\end{lemma}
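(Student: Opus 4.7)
The plan is to test the first equation of the regularized system \dref{1.1fghyuisda} by $n_\varepsilon^m$ (i.e.\ take $p=m+1$ in the spirit of \dref{hhjjcz2.5}), using $\nabla\cdot u_\varepsilon=0$ to annihilate the convective term. This yields the energy identity
\begin{equation*}
\frac{1}{m+1}\frac{d}{dt}\|n_\varepsilon\|_{L^{m+1}(\Omega)}^{m+1}+m\int_\Omega D_\varepsilon(n_\varepsilon)\,n_\varepsilon^{m-1}|\nabla n_\varepsilon|^2=m\int_\Omega n_\varepsilon^{m}F_\varepsilon(n_\varepsilon)S_\varepsilon(x,n_\varepsilon,c_\varepsilon)\nabla n_\varepsilon\cdot\nabla c_\varepsilon.
\end{equation*}
For $m>2$ the map $s\mapsto s^{m-1}$ is increasing, so \dref{ghnjmk9161gyyhuug} gives $D_\varepsilon(n_\varepsilon)=D(n_\varepsilon+\varepsilon)\geq C_D(n_\varepsilon+\varepsilon)^{m-1}\geq C_D n_\varepsilon^{m-1}$, and the dissipation term dominates $mC_D\int_\Omega n_\varepsilon^{2m-2}|\nabla n_\varepsilon|^2$.

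The key observation for controlling the rotational cross-term is that, in contrast to the $\frac{10}{9}<m\leq 2$ regime of Lemma \ref{lemffggma456hjojjkkyhuissddff}, we now have only $\int|\nabla c_\varepsilon|^2$ at our disposal (from Lemma \ref{lemmaghjssddgghhmk4563025xxhjklojjkkk}) rather than $\int|\nabla c_\varepsilon|^4$. The Yosida-type truncation $F_\varepsilon$ saves the day here: its definition \dref{1.ffggvbbnxxccvvn1} yields the $\varepsilon$-dependent bound $sF_\varepsilon(s)\leq 1/\varepsilon$ for $s\geq 0$, which together with \dref{x1.73142vghf48gg} and the $L^\infty$ bound \dref{hnjmssddaqwswddaassffssff3.10deerfgghhjuuloollgghhhyhh} gives $|n_\varepsilon F_\varepsilon(n_\varepsilon)S_\varepsilon|\leq S_0(\|c_0\|_{L^\infty(\Omega)})/\varepsilon$. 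Young's inequality applied to $n_\varepsilon^{m-1}|\nabla n_\varepsilon|\cdot|\nabla c_\varepsilon|$ then produces
\begin{equation*}
m\int_\Omega n_\varepsilon^{m}F_\varepsilon(n_\varepsilon)S_\varepsilon\nabla n_\varepsilon\cdot\nabla c_\varepsilon\leq\frac{mC_D}{2}\int_\Omega n_\varepsilon^{2m-2}|\nabla n_\varepsilon|^2+C(\varepsilon)\int_\Omega|\nabla c_\varepsilon|^2,
\end{equation*}
so that after absorbing half of the diffusion,
\begin{equation*}
\frac{d}{dt}\|n_\varepsilon\|_{L^{m+1}(\Omega)}^{m+1}+\frac{(m+1)mC_D}{2}\int_\Omega n_\varepsilon^{2m-2}|\nabla n_\varepsilon|^2\leq C(\varepsilon)\int_\Omega|\nabla c_\varepsilon|^2.
\end{equation*}

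To conclude, I use the fact, already embedded in the proof of Lemma \ref{lemmaghjssddgghhmk4563025xxhjklojjkkk} via \dref{hhxxcdfvvjjcz2.5}, that testing the second equation of \dref{1.1fghyuisda} by $c_\varepsilon$ combined with the nonnegativity of $n_\varepsilon c_\varepsilon^2$ yields the \emph{time-uniform} bound $\int_0^t\int_\Omega|\nabla c_\varepsilon|^2\leq\tfrac{1}{2}\|c_0\|_{L^2(\Omega)}^2$ for all $t\in(0,T_{max,\varepsilon})$. Since $n_0\in C^\kappa(\bar\Omega)\hookrightarrow L^{m+1}(\Omega)$ by \dref{ccvvx1.731426677gg}, integrating the above differential inequality on $(0,t)$ produces
\begin{equation*}
\|n_\varepsilon(\cdot,t)\|_{L^{m+1}(\Omega)}^{m+1}\leq\|n_0\|_{L^{m+1}(\Omega)}^{m+1}+(m+1)C(\varepsilon)\cdot\tfrac{1}{2}\|c_0\|_{L^2(\Omega)}^2\quad\mbox{for all}\quad t\in(0,T_{max,\varepsilon}),
\end{equation*}
which is exactly \dref{ssdffddfz2.571ssedrrcghvvkbnnhggjjllll}. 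No genuine obstacle arises here; the only point requiring attention is the careful use of the $\varepsilon$-dependent truncation $F_\varepsilon$ to compensate for the fact that only the weaker gradient estimate on $c_\varepsilon$ is available in the regime $m>2$, which is acceptable because the constant is permitted to depend on $\varepsilon$.
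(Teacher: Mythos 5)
Your proposal is correct and follows essentially the same route as the paper: testing with $n_\varepsilon^m$, using $n_\varepsilon F_\varepsilon(n_\varepsilon)\leq 1/\varepsilon$ together with $|S_\varepsilon|\leq S_0(\|c_0\|_{L^\infty(\Omega)})$, absorbing the cross term into the dissipation $\int_\Omega n_\varepsilon^{2m-2}|\nabla n_\varepsilon|^2$ via Young, and integrating in time against the space--time bound on $\int_0^T\int_\Omega|\nabla c_\varepsilon|^2$. The only cosmetic difference is that you rederive that gradient bound directly from the $c_\varepsilon$-equation, whereas the paper cites its earlier a priori estimate (in fact citing the $L^4$ bound, which is a harmless misreference for the $m>2$ case where only the $L^2$ bound of Lemma \ref{lemmaghjssddgghhmk4563025xxhjklojjkkk} is available).
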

\begin{proof}
Multiplying the first equation in \dref{hhjjcz2.5} by $n_{\varepsilon}^{m}$, and
integrating them by parts over $\Omega$, one easily deduces from the Young inequality that there exists a positive constant $C_1$ such that
\begin{equation}
\begin{array}{rl}
&\disp\frac{1}{{m+1}}\disp\frac{d}{dt}\|{n_{\varepsilon}}\|^{{{m+1}}}_{L^{{m+1}}(\Omega)}+
\disp C_Dm\disp\int_{\Omega}n_{\varepsilon}^{2m-2} |\nabla n_{\varepsilon}|^2\\
=&\disp{\int_\Omega -\nabla\cdot(n_{\varepsilon}F_{\varepsilon}(n_{\varepsilon})S_\varepsilon(x, n_{\varepsilon}, c_{\varepsilon})\cdot\nabla c_{\varepsilon})n_{\varepsilon}^{m}}\\
=&\disp{m\int_\Omega n_{\varepsilon}^{m}F_{\varepsilon}(n_{\varepsilon})S_\varepsilon(x, n_{\varepsilon}, c_{\varepsilon})\nabla n_{\varepsilon}\cdot\nabla c_{\varepsilon}}\\
\leq&\disp{\frac{mS_0(\|c_0\|_{L^\infty(\Omega)})}{\varepsilon}\int_\Omega n_{\varepsilon}^{m-1}|\nabla n_{\varepsilon}||\nabla c_{\varepsilon}|}\\
\leq&\disp{\frac{C_Dm}{2}\disp\int_{\Omega}n_{\varepsilon}^{2m-2} |\nabla n_{\varepsilon}|^2+C_1\int_\Omega|\nabla {c}_{\varepsilon}|^2~~
\mbox{for all}~~ t\in(0, T_{max,\varepsilon}).}\\
\end{array}
\label{hhjjccv55ssd677cz2ssdeerr.5}
\end{equation}
Thus, in view of \dref{vvcz2.5ghhjuyuiihjj}, an application
of the Gronwall inequality immediately leads to \dref{ssdffddfz2.571ssedrrcghvvkbnnhggjjllll}.
\end{proof}

Properly combining Lemmata \ref{lemffggma456hjojjkkyhuissddff}--\ref{lemffggddrftgyma456hjojjkkyhuissddff}, we arrive at the following.
\begin{lemma}\label{lemffggma456hjkhddff}
Assuming that $m>\frac{10}{9}$ and $T_{max,\varepsilon}<+\infty.$
Then there exits a positive constant $C$ such that 
\begin{equation}\|{n}_\varepsilon(\cdot, t)\|_{L^{p_0}(\Omega)}\leq C~~ \mbox{for all}~~ t\in(0, T_{max,\varepsilon}) ~~~\mbox{with}~~p_0>3.
 \label{ssdffddfz2.571cghvvgjjllll}
\end{equation}
\end{lemma}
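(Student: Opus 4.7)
The plan is to derive the claim as an immediate consequence of the two preceding lemmata, by choosing $p_{0}$ differently in the two parameter regimes into which the hypothesis $m>\frac{10}{9}$ naturally splits.

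First I would treat the case $\frac{10}{9}<m\leq 2$. Lemma \ref{lemffggma456hjojjkkyhuissddff} provides, under the standing assumption $T_{max,\varepsilon}<+\infty$, an $\varepsilon$-dependent bound of $\|n_{\varepsilon}(\cdot,t)\|_{L^{p}(\Omega)}$ for every $p<\frac{38}{9}$. Since $\frac{38}{9}>3$, the interval $\bigl(3,\frac{38}{9}\bigr)$ is non-empty, so I fix any $p_{0}$ in this interval and invoke Lemma \ref{lemffggma456hjojjkkyhuissddff} with this choice, yielding \dref{ssdffddfz2.571cghvvgjjllll}.

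Next I would handle the case $m>2$. Here Lemma \ref{lemffggddrftgyma456hjojjkkyhuissddff} gives a uniform-in-time $L^{m+1}$ bound for $n_{\varepsilon}$, and since $m>2$ implies $m+1>3$, taking $p_{0}:=m+1$ directly produces \dref{ssdffddfz2.571cghvvgjjllll}.

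There is no genuine obstacle here: the two lemmata were tailored precisely so that the exponents they deliver exceed the threshold $3$ demanded by the later Stokes / Moser-iteration machinery. The only observation to verify is the numerical inequality $\frac{38}{9}>3$ in the first case, which is immediate, together with $m+1>3$ in the second case. Combining the two regimes, one obtains the unified statement for all $m>\frac{10}{9}$.
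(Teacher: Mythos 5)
Your proposal matches the paper's own proof: the paper likewise splits into the regimes $\frac{10}{9}<m\leq 2$ and $m>2$, invoking Lemma \ref{lemffggma456hjojjkkyhuissddff} (with any $p\in(3,\frac{38}{9})$) in the first case and Lemma \ref{lemffggddrftgyma456hjojjkkyhuissddff} (with $p_0=m+1>3$) in the second. Your explicit check that $\frac{38}{9}>3$ is the only detail the paper leaves implicit; the argument is correct and essentially identical.
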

\begin{proof}If  $\frac{10}{9}<m\leq 2$, by Lemma \ref{lemffggma456hjojjkkyhuissddff}, we obtain that there exists a positive constant $C_1$ such that
\begin{equation}\|{n}_\varepsilon(\cdot, t)\|_{L^p(\Omega)}\leq C_1~~ \mbox{for all}~~ t\in(0, T_{max,\varepsilon})
 \label{ssdffddfz2.571cghvvkbnnvvtthyybbhggllll}
\end{equation}
with $p<\frac{38}{9}.$
While, if $m>2$, then by Lemma \ref{lemffggddrftgyma456hjojjkkyhuissddff}, we derive that we can find a positive $C_2$ such that
\begin{equation}\|{n}_\varepsilon(\cdot, t)\|_{L^{m+1}(\Omega)}\leq C_2~~ \mbox{for all}~~ t\in(0, T_{max,\varepsilon}).
 \label{ssfddf71cgghhvvkkhbnnhggffggjjllll}
\end{equation}
This combined with \dref{ssdffddfz2.571cghvvkbnnvvtthyybbhggllll} gives \dref{ssdffddfz2.571cghvvkbnnhggjjllll}  and finishes the proof of Lemma \ref{lemffggma456hjkhddff}.
\end{proof}

With Lemma \ref{lemffggma456hjkhddff} at hand, we can proceed to show that our approximate solutions are actually global
in time.
\begin{lemma}\label{kkklemmaghjmk4563025xxhjklojjkkk}
Let $m>\frac{10}{9}$. Then
for all $\varepsilon\in(0,1),$ the solution of  \dref{1.1fghyuisda} is global in time.
\end{lemma}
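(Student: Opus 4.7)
The plan is to argue by contradiction. Suppose $T_{max,\varepsilon}<\infty$; by the extensibility criterion \dref{1.163072x} in Lemma \ref{lemma70} it suffices to rule out blow-up of $\|n_\varepsilon(\cdot,t)\|_{L^\infty(\Omega)}+\|c_\varepsilon(\cdot,t)\|_{W^{1,\infty}(\Omega)}+\|A^\gamma u_\varepsilon(\cdot,t)\|_{L^2(\Omega)}$ as $t\nearrow T_{max,\varepsilon}$. The starting point is Lemma \ref{lemffggma456hjkhddff}, which provides
$$\|n_\varepsilon(\cdot,t)\|_{L^{p_0}(\Omega)}\le C\quad\text{for all }t\in(0,T_{max,\varepsilon})$$
for some $p_0>3$. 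I would then proceed by a bootstrap argument in the order $u_\varepsilon \to \nabla c_\varepsilon \to n_\varepsilon$.

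First I would bound $A^\gamma u_\varepsilon$ in $L^2(\Omega)$. Writing the projected form of the third equation in \dref{1.1fghyuisda} and applying the variation-of-constants formula with the Stokes semigroup $(e^{-tA})_{t\ge0}$, we get
$$u_\varepsilon(t)=e^{-tA}u_0-\kappa\int_0^t e^{-(t-s)A}\mathcal{P}[(Y_\varepsilon u_\varepsilon\cdot\nabla)u_\varepsilon]\,ds+\int_0^t e^{-(t-s)A}\mathcal{P}[n_\varepsilon\nabla\phi]\,ds.$$
Using the standard smoothing estimate $\|A^\gamma e^{-tA}\mathcal{P}f\|_{L^2}\le Ct^{-\gamma-\frac{3}{2}(\frac{1}{q}-\frac{1}{2})}\|f\|_{L^q}$ with $q=p_0>3$ on the gravitational term, and exploiting the key property of the Yosida approximation that $\|Y_\varepsilon u_\varepsilon\|_{W^{2,2}(\Omega)}\le C(\varepsilon)\|u_\varepsilon\|_{L^2(\Omega)}$, together with the a priori $L^2$-bound on $u_\varepsilon$ from Lemmata \ref{lemmakkllgg4563025xxhjklojjkkk}/\ref{lemmaghjssddgghhmk4563025xxhjklojjkkk}, both integrals can be controlled (with constants depending on $\varepsilon$) so as to yield $\|A^\gamma u_\varepsilon(\cdot,t)\|_{L^2(\Omega)}\le C$, hence in particular $u_\varepsilon\in L^\infty((0,T_{max,\varepsilon});L^\infty(\Omega))$ by the embedding $D(A^\gamma)\hookrightarrow L^\infty(\Omega)$ for $\gamma>\frac{3}{4}$.

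Next I would treat $c_\varepsilon$ via the Neumann heat semigroup $(e^{t\Delta})_{t\ge 0}$. Writing
$$c_\varepsilon(t)=e^{t\Delta}c_0-\int_0^t e^{(t-s)\Delta}[u_\varepsilon\cdot\nabla c_\varepsilon+n_\varepsilon c_\varepsilon]\,ds,$$
and using the $L^p$-$L^q$ smoothing of $\nabla e^{t\Delta}$, the bound \dref{hnjmssddaqwswddaassffssff3.10deerfgghhjuuloollgghhhyhh}, the $L^\infty$-bound on $u_\varepsilon$ just obtained, and the $L^{p_0}$-bound on $n_\varepsilon$ with $p_0>3$, a standard bootstrap yields $\|\nabla c_\varepsilon(\cdot,t)\|_{L^\infty(\Omega)}\le C$ on $(0,T_{max,\varepsilon})$.

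With $u_\varepsilon$ and $\nabla c_\varepsilon$ now bounded in $L^\infty$, the last step is a Moser-type iteration applied to the first equation of \dref{1.1fghyuisda}. Testing against $n_\varepsilon^{p-1}$ for increasing $p$ and using \dref{ghnjmk9161gyyhuug}, \dref{x1.73142vghf48gg} and \dref{1.ffggvbbnxxccvvn1} (the $\varepsilon$-dependent factor $F_\varepsilon\le 1/\varepsilon$ is harmless here), the cross term $\int n_\varepsilon^{p-1}F_\varepsilon S_\varepsilon\nabla n_\varepsilon\cdot\nabla c_\varepsilon$ is absorbed by the diffusive dissipation up to a lower-order term controllable via the $L^\infty$-bound on $\nabla c_\varepsilon$ and the already-derived $L^{p_0}$-bound. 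Iterating $p\to\infty$ in the familiar Alikakos–Moser fashion produces $\|n_\varepsilon(\cdot,t)\|_{L^\infty(\Omega)}\le C$, which together with the previous two estimates contradicts \dref{1.163072x}; therefore $T_{max,\varepsilon}=\infty$.

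I expect the main obstacle to be the Navier–Stokes convective nonlinearity $\kappa(Y_\varepsilon u_\varepsilon\cdot\nabla)u_\varepsilon$: controlling it in a norm compatible with the Stokes semigroup smoothing while keeping the estimate closed requires essential use of the extra regularization supplied by $Y_\varepsilon$, and only by paying constants depending on $\varepsilon$ can this step be completed at this stage (which is acceptable since $\varepsilon$-independence is postponed to the next section).
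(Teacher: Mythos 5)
There is a genuine gap in your first bootstrap step, the bound on $A^\gamma u_\varepsilon$ in $L^2(\Omega)$. You propose to control the Duhamel integral $\int_0^t e^{-(t-s)A}\mathcal{P}[(Y_\varepsilon u_\varepsilon\cdot\nabla)u_\varepsilon]\,ds$ using the Yosida smoothing $\|Y_\varepsilon u_\varepsilon\|_{W^{2,2}(\Omega)}\le C(\varepsilon)\|u_\varepsilon\|_{L^2(\Omega)}$ together with the a priori $L^2$-bound on $u_\varepsilon$. But the derivative in $(Y_\varepsilon u_\varepsilon\cdot\nabla)u_\varepsilon$ falls on the \emph{unregularized} factor $u_\varepsilon$, so regularity of $Y_\varepsilon u_\varepsilon$ buys you nothing for $\nabla u_\varepsilon$: at that stage the only information on $\nabla u_\varepsilon$ is the space--time bound $\int_0^{T}\|\nabla u_\varepsilon\|_{L^2(\Omega)}^2\le C$ from Lemmata \ref{lemmakkllgg4563025xxhjklojjkkk}--\ref{lemmaghjssddgghhmk4563025xxhjklojjkkk}, not a pointwise-in-time one. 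With $\|h_\varepsilon(\cdot,s)\|_{L^2(\Omega)}\le C\|\nabla u_\varepsilon(\cdot,s)\|_{L^2(\Omega)}$ merely in $L^2_t$, the convolution $\int_0^t(t-s)^{-\gamma}\|h_\varepsilon(\cdot,s)\|_{L^2(\Omega)}\,ds$ requires $(t-s)^{-\gamma}\in L^2_t$, i.e.\ $\gamma<\tfrac12$, which is incompatible with $\gamma>\tfrac34$; rewriting the term in divergence form $\nabla\cdot(u_\varepsilon\otimes Y_\varepsilon u_\varepsilon)$ costs an extra $(t-s)^{-1/2}$ and fails as well. So the Duhamel argument does not close as you describe it.

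The paper closes exactly this gap by inserting a preliminary energy step before the semigroup argument: testing the projected Stokes equation by $Au_\varepsilon$, using $\|Y_\varepsilon u_\varepsilon\|_{L^\infty(\Omega)}\le C$ (from $D(1+\varepsilon A)\hookrightarrow L^\infty(\Omega)$ and $\sup_t\|u_\varepsilon\|_{L^2(\Omega)}\le C$) to estimate $\kappa^2\int_\Omega|(Y_\varepsilon u_\varepsilon\cdot\nabla)u_\varepsilon|^2\le C\int_\Omega|\nabla u_\varepsilon|^2$, and then applying Gronwall on the finite interval $(0,T_{max,\varepsilon})$ together with the integrated dissipation bound to obtain $\sup_{t<T_{max,\varepsilon}}\|\nabla u_\varepsilon(\cdot,t)\|_{L^2(\Omega)}\le C$. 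Only then is $h_\varepsilon=\mathcal{P}[-\kappa(Y_\varepsilon u_\varepsilon\cdot\nabla)u_\varepsilon+n_\varepsilon\nabla\phi]$ bounded in $L^\infty_tL^2_x$, and the variation-of-constants estimate for $A^\gamma u_\varepsilon$ with the integrable kernel $(t-s)^{-\gamma}$ goes through. Your subsequent steps (the $L^4\to W^{1,\infty}$ bootstrap for $c_\varepsilon$ via the Neumann heat semigroup and the Alikakos--Moser iteration for $n_\varepsilon$, followed by contradiction with \dref{1.163072x}) coincide with the paper's argument and are fine once this missing pointwise-in-time gradient bound is supplied.
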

\begin{proof}
Assuming that $T_{max,\varepsilon}$ be finite for some $\varepsilon\in(0,1)$.
Firstly, testing the projected Stokes equation $u_{\varepsilon t}+A u_{\varepsilon}=\mathcal{P}(-\kappa (Y_{\varepsilon}u_{\varepsilon} \cdot \nabla)u_{\varepsilon}+n_{\varepsilon}\nabla \phi) $ by $Au_{\varepsilon}$ shows that
\begin{equation}
\begin{array}{rl}
&\disp{\frac{1}{{2}}\frac{d}{dt}\|A^{\frac{1}{2}}u_{\varepsilon}\|^{{{2}}}_{L^{{2}}(\Omega)}+
\int_{\Omega}|Au_{\varepsilon}|^2 }\\
=&\disp{ \int_{\Omega}Au_{\varepsilon}\kappa
(Y_{\varepsilon}u_{\varepsilon} \cdot \nabla)u_{\varepsilon}+ \int_{\Omega}n_{\varepsilon}\nabla\phi Au_{\varepsilon}}\\
\leq&\disp{ \frac{3}{4}\int_{\Omega}|Au_{\varepsilon}|^2+\kappa^2\int_{\Omega}
|(Y_{\varepsilon}u_{\varepsilon} \cdot \nabla)u_{\varepsilon}|^2+ \|\nabla\phi\|^2_{L^\infty(\Omega)}\int_{\Omega}n_{\varepsilon}^2
~~\mbox{for all}~~t\in(0,T_{max,\varepsilon}),}\\
\end{array}
\label{ddfghgghjjnnhhkklld911cz2.5ghju48}
\end{equation}
 where $\mathcal{P}$ denotes the Helmholtz project from $L^2(\Omega)$ into $L^2_\sigma(\Omega)$.
Next, we observe that
$D(1 + \varepsilon A)  :=W^{2,2}(\Omega) \cap W_{0,\sigma}^{1,2}(\Omega)\hookrightarrow L^\infty(\Omega),$ we can find $C_3 > 0$ and $C_4 > 0$ such that
\begin{equation}
\|Y_{\varepsilon}u_{\varepsilon}(\cdot,t)\|_{L^\infty(\Omega)}=\|(I+\varepsilon A)^{-1}u_{\varepsilon}(\cdot,t)\|_{L^\infty(\Omega)}\leq C_3\|u_{\varepsilon}(\cdot,t)\|_{L^2(\Omega)}\leq C_4~~\mbox{for all}~~t\in(0,T_{max,\varepsilon}).
\label{ssdcfvgdhhjjdfghgghjjnnhhkklld911cz2.5ghju48}
\end{equation}
Now, we derive from the H\"{o}lder inequality, \dref{czfvgb2.5ghhjuyuiihjj} (or \dref{czfvgb2.5ghhjuyuccvviihjj}) and \dref{ssdcfvgdhhjjdfghgghjjnnhhkklld911cz2.5ghju48} that
\begin{equation}
\begin{array}{rl}
&\|Y_{\varepsilon}(u_{\varepsilon}(\cdot,t) \cdot \nabla)u_{\varepsilon}(\cdot,t)\|_{L^{p}(\Omega)}\\
\leq&\|Y_{\varepsilon}u_{\varepsilon}(\cdot,t)\|_{L^{\infty}(\Omega)} \|u_{\varepsilon}(\cdot,t)\|_{L^{2}(\Omega)}|\Omega|^{\frac{2-p}{2}}\\
\leq & C_5~~\mbox{for all}~~t\in(0,T_{max,\varepsilon}).
\end{array}
\label{zjccfbhhccvbz2.5297x9630111kkuu}
\end{equation}

On the other hand, by 
\dref{ssdcfvgdhhjjdfghgghjjnnhhkklld911cz2.5ghju48}, we derive that
\begin{equation}
\begin{array}{rl}
\kappa^2\disp\int_{\Omega}
|(Y_{\varepsilon}u_{\varepsilon} \cdot \nabla)u_{\varepsilon}|^2\leq&\disp{ \kappa^2\|Y_{\varepsilon}u_{\varepsilon}\|^2_{L^\infty(\Omega)}\int_{\Omega}|\nabla u_{\varepsilon}|^2}\\
\leq&\disp{ \kappa^2\|Y_{\varepsilon}u_{\varepsilon}\|^2_{L^\infty(\Omega)}\int_{\Omega}|\nabla u_{\varepsilon}|^2}\\
\leq&\disp{ \kappa^2C_4^2\int_{\Omega}|\nabla u_{\varepsilon}|^2~~\mbox{for all}~~t\in(0,T_{max,\varepsilon}),}\\
\end{array}
\label{ssdcfvgddfghgghjjnnhhkklld911cz2.5ghju48}
\end{equation}
where $C_4$ is the same as \dref{ssdcfvgdhhjjdfghgghjjnnhhkklld911cz2.5ghju48}.

Plugging 
substitution of
\dref{ssdcfvgddfghgghjjnnhhkklld911cz2.5ghju48} into \dref{ddfghgghjjnnhhkklld911cz2.5ghju48}, we derive from  \dref{ssdffddfz2.571cghvvgjjllll} and the Gronwall inequality
   that there exists a positive constant $C_6$ such that
   \begin{equation}
\begin{array}{rl}
\disp\int_{\Omega}|\nabla u_{\varepsilon}(\cdot,t)|^2\leq C_6~~\mbox{for all}~~t\in(0,T_{max,\varepsilon}).
\end{array}
\label{ssdcfvgddfghgssdrrttghjjnnhhkklld911cz2.5ghju48}
\end{equation}
Let $h_{\varepsilon}(x,t)=\mathcal{P}[-\kappa (Y_{\varepsilon}u_{\varepsilon} \cdot \nabla)u_{\varepsilon}+n_{\varepsilon}\nabla \phi ]$.
Then along with \dref{ssdffddfz2.571cghvvgjjllll}  and \dref{ssdcfvgddfghgghjjnnhhkklld911cz2.5ghju48}--\dref{ssdcfvgddfghgssdrrttghjjnnhhkklld911cz2.5ghju48}, this in turn provides $C_7 > 0$ such that $\|h_{\varepsilon}(\cdot,t)\|_{L^2(\Omega)} \leq C_7$ for
all $t\in (0, T_{max,\varepsilon})$. Thus if we pick an arbitrary $\gamma\in (\frac{3}{4}, 1),$ then by smoothing properties of the
Stokes semigroup (\cite{Giga1215}) entail that for some $C_{8} > 0$, we have
\begin{equation}
\begin{array}{rl}
\|A^\gamma u_\varepsilon(\cdot, t)\|_{L^2(\Omega)}\leq&\disp{\|A^\gamma
e^{-tA}u_0\|_{L^2(\Omega)} +\int_0^t\|A^\gamma e^{-(t-\tau)A}h_\varepsilon(\cdot,\tau)d\tau\|_{L^2(\Omega)}d\tau}\\
\leq&\disp{C_2 t^{-\lambda_1(t-1)}
\|u_0\|_{L^2(\Omega)} +C_8\int_0^t(t-\tau)^{-\gamma}\|h_\varepsilon(\cdot,\tau)\|_{L^2(\Omega)}d\tau}\\
\leq&\disp{C_2 t^{-\lambda_1(t-1)}
\|u_0\|_{L^2(\Omega)} +\frac{C_7C_8T^{1-\gamma}_{max,\varepsilon}}{1-\gamma}~~ \mbox{for all}~~ t\in(0,T_{max,\varepsilon}).}\\
\end{array}
\label{cz2.57151ccvvhccvvhjjjkkhhggjjllll}
\end{equation}
Since $\gamma>\frac{3}{4},$
 $D(A^\gamma)$ is continuously embedded into $L^\infty(\Omega)$, hence, \dref{cz2.57151ccvvhccvvhjjjkkhhggjjllll} yields to
 \begin{equation}
\begin{array}{rl}
\|u_\varepsilon(\cdot, t)\|_{L^\infty(\Omega)}\leq C_9\|A^\gamma u_\varepsilon(\cdot, t)\|_{L^2(\Omega)}\leq C_{10}~~ \mbox{for all}~~ t\in(0,T_{max,\varepsilon})\\
\end{array}
\label{cz2.5715jkkcvccvvhjjjkddfffffkhhgll}
\end{equation}
for some positive constants $C_9$ and $C_{10}$.
Next, let  $T\in (0, T_{max,\varepsilon})$ and  $M(T) :=
\sup_{t\in(0,T )} \|\nabla c_\varepsilon(\cdot,t)\|_{L^4(\Omega)} $.
 Now, employing $\Delta$ to both sides of the
variation-of-constants formula for $c_{\varepsilon}$,
we derive that
 %
%
%
%
%
$$c_\varepsilon(\cdot, t) = e^{t\Delta }c_0 -\int_0^te^{-(t-s)\Delta}(n_\varepsilon c_\varepsilon+u_{\varepsilon} \cdot \nabla c_{\varepsilon})(\cdot,s)ds~~ \mbox{for all}~~ t\in(0,T_{max,\varepsilon}),$$
hence,
\begin{equation}
\begin{array}{rl}
&\disp{\|\nabla c_\varepsilon(\cdot, t)\|_{L^4(\Omega)}}\\
\leq&\disp{\|\nabla
e^{t\Delta}c_0\|_{L^{4}(\Omega)}}\\
&+\disp{\int_{0}^t\|\nabla e^{(t-s)\Delta}(n_\varepsilon c_\varepsilon)(\cdot,s)\|_{L^4(\Omega)}ds+\int_{0}^t\|\nabla e^{(t-s)\Delta}(u_\varepsilon \cdot \nabla c_\varepsilon)(\cdot,s)\|_{L^4(\Omega)}ds}\\
\end{array}
\label{dfff111zjccffgbhhjvccccvvvbbvscz2.5297x9630111kkuu}
\end{equation}
for all $t\in(0,T_{max,\varepsilon})$.
In the following, we will
estimate the right-hand side of \dref{dfff111zjccffgbhhjvccccvvvbbvscz2.5297x9630111kkuu}.

Indeed, 
due to the hypothesis of $c_0$ and the $L^p$-$L^q$ estimates we conclude  that there exists $C_{11}>0$ such that
\begin{equation}
\begin{array}{rl}
&\disp{\|\nabla
e^{t\Delta}c_0\|_{L^{4}(\Omega)}\leq C_{11}t^{-\frac{1}{2}}\|c_0\|_{L^{4}(\Omega)}~~\mbox{for all}~~ t>0.}\\
\end{array}
\label{zjccffgccvbbbccvvhhjvccvbbvscz2.5297x9630111kkuu}
\end{equation}
Since, $-\frac{1}{2}-\frac{3}{2}(\frac{1}{2}-\frac{1}{4})>-1,$  by $L^p$-$L^q$ estimate for Neumann semigroup and Lemma
 \ref{ghjssdeedrfe116lemma70hhjj} and Lemma \ref{lemffggma456hjkhddff},
we can  find $C_{12}> 0,C_{13}> 0$ and $\lambda_1> 0$ such that
\begin{equation}
\begin{array}{rl}
&\disp{\int_{0}^t\|\nabla e^{(t-s)\Delta}(n_\varepsilon c_\varepsilon)(\cdot,s)\|_{L^4(\Omega)}ds}\\
\leq&\disp{\int_{0}^tC_{12}(1+(t-s)^{-\frac{1}{2}-\frac{3}{2}(\frac{1}{2}-\frac{1}{4})})e^{-\lambda_1(t-s)}\|n_\varepsilon(\cdot,s) c_\varepsilon(\cdot,s)
\|_{L^2(\Omega)}ds}\\
\leq&\disp{\int_{0}^tC_{12}(1+(t-s)^{-\frac{1}{2}-\frac{3}{2}(\frac{1}{2}-\frac{1}{4})})e^{-\lambda_1(t-s)}\|n_\varepsilon(\cdot,s)\|_{L^2(\Omega)} \|c_\varepsilon(\cdot,s)
\|_{L^\infty(\Omega)}ds}\\
\leq&\disp{C_{13}~~ \mbox{for all}~~ t\in(0,T_{max,\varepsilon}).}\\
\end{array}
\label{zjccffgbhhjcccvvbvsc297x963011kuu}
\end{equation}

Now, with the help of the  H\"{o}lder inequality, we conclude that there exists a positive constant $C_{14}$ such that
\begin{equation}
\begin{array}{rl}
&\disp{\int_{0}^t\|\nabla e^{(t-s)\Delta}(u_\varepsilon \cdot \nabla c_\varepsilon)(\cdot,s)\|_{L^4(\Omega)}ds}\\
\leq&\disp{\int_{0}^tC_{14}(1+(t-s)^{-\frac{1}{2}-\frac{3}{2}(\frac{5}{18}-\frac{1}{4})})e^{-\lambda_1(t-s)}\|u_\varepsilon(\cdot,s)\nabla c_\varepsilon(\cdot,s)
\|_{L^{\frac{18}{5}}(\Omega)}ds
}\\
\leq&\disp{\int_{0}^tC_{14}(1+(t-s)^{-\frac{1}{2}-\frac{3}{2}(\frac{5}{18}-\frac{1}{4})})e^{-\lambda_1(t-s)}\|u_\varepsilon(\cdot,s)\|_{L^\infty(\Omega)}\|\nabla c_\varepsilon(\cdot,s)
\|_{L^{\frac{18}{5}}(\Omega)}ds}\\
\end{array}
\label{ddfzjccffgbhhjvcvvbbvz2.5297x9630111kkuu}
\end{equation}
for all $t\in(0,T_{max,\varepsilon})$.
On the other hand, due to the interpolation
inequality, we get that
$$
\begin{array}{rl}
&\disp{\|\nabla c_\varepsilon(\cdot,s)
\|_{L^{\frac{18}{5}}(\Omega)}}\\
\leq&\disp{C_{14}[
\|\nabla c_\varepsilon(\cdot,s)\|_{L^4(\Omega)}^{\frac{2}{3}}\| c_\varepsilon(\cdot,s)\|_{L^\infty(\Omega)}^{\frac{1}{3}}+\| c_\varepsilon(\cdot,s)\|_{L^\infty(\Omega)}]
~~ \mbox{for all}~~ t\in(0,T_{max,\varepsilon}).}\\
\end{array}
$$
Plugging  the above inequality  into \dref{ddfzjccffgbhhjvcvvbbvz2.5297x9630111kkuu} and applying \dref{cz2.5715jkkcvccvvhjjjkddfffffkhhgll}, we have
\begin{equation}
\begin{array}{rl}
&\disp{\int_{0}^t\|\nabla e^{(t-s)\Delta}(u_\varepsilon \cdot \nabla c_\varepsilon)(\cdot,s)\|_{L^4(\Omega)}ds\leq C_{15}M^{\frac{2}{3}}(T)+C_{15}~~ \mbox{for all}~~ t\in(0,T_{max,\varepsilon})
}\\
\end{array}
\label{zjccffgbhhjvvbbvz2.5297x96301uu}
\end{equation}
and some positive constant $C_{15}.$
Now, collecting \dref{dfff111zjccffgbhhjvccccvvvbbvscz2.5297x9630111kkuu}--\dref{ddfzjccffgbhhjvcvvbbvz2.5297x9630111kkuu} and \dref{zjccffgbhhjvvbbvz2.5297x96301uu}, we can derive
%
\begin{equation}
\begin{array}{rl}
&\disp{\|\nabla c_\varepsilon(\cdot, t)\|_{L^4(\Omega)}\leq C_{16}~~ \mbox{for all}~~ t\in(\tau,T_{max,\varepsilon})}\\
\end{array}
\label{zjccffgbhhjvccccvvvbbvscz2.5297x9630111kkuu}
\end{equation}
with $\tau\in(0,T_{max,\varepsilon})$ and some positive constant $C_{16}$.
In order to get the boundedness of $\|\nabla c_\varepsilon(\cdot, t)\|_{L^\infty(\Omega)}$,
we rewrite the variation-of-constants formula for $c_{\varepsilon}$ in the form
$$c_\varepsilon(\cdot, t) = e^{t(\Delta-1) }c_0 +\int_0^te^{(t-s)(\Delta-1)}(c_\varepsilon-n_\varepsilon c_\varepsilon-u_{\varepsilon} \cdot \nabla c_{\varepsilon})(\cdot,s)ds~~ \mbox{for all}~~ t\in(0,T_{max,\varepsilon}).$$
Now, picking $\theta\in(\frac{1}{2}+\frac{3}{2q_0},1),$ then the domain of the fractional power $D((-\Delta + 1)^\theta)\hookrightarrow W^{1,\infty}(\Omega)$
(\cite{Zhangddff4556}), where $q_0:=\min\{p_0,4\}>3$ and $p_0$ is the same as \dref{ssdffddfz2.571cghvvgjjllll}.

 Hence, in view of $L^p$-$L^q$ estimates associated heat semigroup, \dref{ccvvx1.731426677gg}, \dref{ssdffddfz2.571cghvvgjjllll}, \dref{cz2.5715jkkcvccvvhjjjkddfffffkhhgll} and \dref{zjccffgbhhjvccccvvvbbvscz2.5297x9630111kkuu}, we conclude that
\begin{equation}
\begin{array}{rl}
&\disp{\|\nabla c_\varepsilon(\cdot, t)\|_{W^{1,\infty}(\Omega)}}\\
\leq&\disp{C_{17}t^{-\theta}e^{-\lambda t}\|c_0\|_{L^{q_0}(\Omega)}}\\
&+\disp{\int_{0}^t(t-s)^{-\theta}e^{-\lambda(t-s)}\|(c_\varepsilon-n_\varepsilon c_\varepsilon-u_{\varepsilon} \cdot \nabla c_{\varepsilon})(s)\|_{L^{q_0}(\Omega)}ds}\\
\leq&\disp{C_{18}\tau^{-\theta}+C_{18}\int_{0}^t(t-s)^{-\theta}e^{-\lambda(t-s)}+C_{18}\int_{0}^t(t-s)^{-\theta}e^{-\lambda(t-s)}
[\|n_\varepsilon(s)\|_{L^{q_0}(\Omega)}+
\|\nabla c_{\varepsilon}(s)\|_{L^{q_0}(\Omega)}]ds}\\
\leq&\disp{C_{19}~~ \mbox{for all}~~ t\in(\tau,T_{max,\varepsilon})}\\
\end{array}
\label{zjccffgbhjcvvvbscz2.5297x96301ku}
\end{equation}
for some positive constant $C_{17},C_{18}$ and $C_{19}$.

Finally, for all $p>1$, multiplying the first equation in \dref{hhjjcz2.5} by $n_{\varepsilon}^{p-1}$, after integrating by parts and using the Young inequality,
 we easily  deduce from  \dref{zjccffgbhjcvvvbscz2.5297x96301ku} that
%
%
\begin{equation}
\begin{array}{rl}
\disp\frac{1}{{p}}\disp\frac{d}{dt}\|n_{\varepsilon}\|^{{{p}}}_{L^{{p}}(\Omega)}+
\disp\frac{C_D(p-1)}{2}\disp\int_{\Omega}n_{\varepsilon}^{m+p-3} |\nabla n_{\varepsilon}|^2
\leq &\disp{C_{20}\int_\Omega n_{\varepsilon}^{p+1-m}  }\\
\leq &\disp{C_{21}\int_\Omega n_{\varepsilon}^{p}+C_{14}~~ \mbox{for all}~~ t\in(\tau,T_{max,\varepsilon}) }\\
\end{array}
\label{cz2.5ghhjui78jj90099}
\end{equation}
and some positive constants $C_{20}$ and $C_{21}$.

Therefore, integrating the above inequality  with respect to $t$, we derive that there exists a positive constant $C_{22}$ such that
\begin{equation}
\begin{array}{rl}
\|n_{\varepsilon}(\cdot, t)\|_{L^{{p}}(\Omega)}\leq C_{22} ~~ \mbox{for all}~~p\geq1~~\mbox{and}~~  t\in(\tau,T_{max,\varepsilon}). \\
\end{array}
\label{cz2.5g556789hhjui78jj90099}
\end{equation}
Next, using the outcome of \dref{cz2.5g556789hhjui78jj90099} with suitably large $p$ as a starting point, we may invoke
Lemma A.1 in \cite{Tao794} which by means of a Moser-type iteration applied to the first equation in \dref{1.1fghyuisda}
establishes
\begin{equation}
\begin{array}{rl}
\|n_{\varepsilon}(\cdot, t)\|_{L^{{\infty}}(\Omega)}\leq C_{23} ~~ \mbox{for all}~~~  t\in(\tau,T_{max,\varepsilon}) \\
\end{array}
\label{cz2.5g5gghh56789hhjui78jj90099}
\end{equation}
and a positive constant $C_{23}$.
In view of \dref{cz2.5715jkkcvccvvhjjjkddfffffkhhgll},  \dref{zjccffgbhjcvvvbscz2.5297x96301ku} and \dref{cz2.5g5gghh56789hhjui78jj90099}, we apply Lemma \ref{lemma70} to reach a contradiction.
\end{proof}

\section{Time regularity}
In order to pass to the limit 
in \dref{1.1fghyuisda}, we shall need an appropriate boundedness
property of the time derivatives of certain powers of $n_{\varepsilon},c_{\varepsilon}$ and $u_{\varepsilon}$.
We first give the following lemma, which gives some
estimates for $n_{\varepsilon},c_{\varepsilon}$ and $n_{\varepsilon}$.
\begin{lemma}\label{lemma45630hhuujjuuyytt}
Let
\dref{dd1.1fghyuisdakkkllljjjkk} and \dref{ccvvx1.731426677gg}
 hold, and suppose that $m$ and $S$ satisfy \dref{ghnjmk9161gyyhuug} and \dref{x1.73142vghf48rtgyhu}--\dref{x1.73142vghf48gg}, respectively.
%
%
%
%
%
Then any small $\varepsilon>0(\varepsilon<1)$,
 one can find $C > 0$ independent of $\varepsilon$ such that for all $T\in (0, \infty)$
 \begin{equation}
\int_0^T\int_{\Omega}\left[|\nabla u_\varepsilon|^2+ |u_{\varepsilon}|^{\frac{10}{3}} \right]  \leq C(T+1).
\label{gghhzjscz2.ddffg5297x9630111kkhhiioott4}
\end{equation}

 Moreover, if $\frac{10}{9}< m \leq 2, $ then we have 
\begin{equation}
\int_0^T\int_{\Omega}\left[(n_\varepsilon+\varepsilon)^{\frac{3m+2}{3}} +|\nabla n_\varepsilon|^{\frac{3m+2}{4}}++|\nabla {c_{\varepsilon}}|^4+|D_{\varepsilon}(n_{\varepsilon})\nabla n_{\varepsilon}|^{\frac{3m+2}{3m+1}} \right] \leq C(T+1).
\label{gghhzjscz2.5297x9630111kkhhiioottvvbb}
\end{equation}
%
%
%
While if $m>2$, then
there exists $C>0$ independent of $\varepsilon$ such that
\begin{equation}
\begin{array}{rl}
&\disp{\int_{\Omega}\left[(n_{\varepsilon}+\varepsilon)^{m-1}+   c_{\varepsilon}^2+ | {u_{\varepsilon}}|^2\right]\leq C~~~\mbox{for all}~~ t>0}\\
\end{array}
\label{czfvgb2.jjkkkggffgghh5ghhjuyuccvviihjj}
\end{equation}
and
\begin{equation}
\begin{array}{rl}
&\disp{\int_{0}^T\int_{\Omega}\left[ (n_{\varepsilon}+\varepsilon)^{\frac{8(m-1)}{3}}+ (n_{\varepsilon}+\varepsilon)^{2m-4} |\nabla {n_{\varepsilon}}|^2+  |\nabla {c_{\varepsilon}}|^2+  |\nabla {u_{\varepsilon}}|^2+|D_{\varepsilon}(n_{\varepsilon})\nabla n_{\varepsilon}|^{\frac{8(m-1)}{4m-1}}\right]}\\
\leq&\disp{ C(T+1)~~\mbox{for all}~~ T > 0.}\\
\end{array}
\label{bnmbncz2.5ghhjuggyuuyuivvffggbnnihjj}
\end{equation}
\end{lemma}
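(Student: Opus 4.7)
The plan is to assemble the four blocks of estimates in \dref{gghhzjscz2.ddffg5297x9630111kkhhiioott4}--\dref{bnmbncz2.5ghhjuggyuuyuivvffggbnnihjj} from three inputs: (i) the energy-dissipation inequalities of Lemmata~\ref{lemmakkllgg4563025xxhjklojjkkk} and \ref{lemmaghjssddgghhmk4563025xxhjklojjkkk}, which have the schematic form $\tfrac{d}{dt}E_\varepsilon+\mathcal D_\varepsilon\le C$ with $C$ independent of $\varepsilon$, so that once Lemma~\ref{kkklemmaghjmk4563025xxhjklojjkkk} gives $T_{max,\varepsilon}=\infty$, integration from $0$ to $T$ yields both a pointwise-in-$t$ bound on $E_\varepsilon$ and $\int_0^T\mathcal D_\varepsilon\le C(T+1)$; (ii) a three-dimensional Gagliardo--Nirenberg interpolation, used to upgrade an $L^\infty_tL^p_x$ bound plus an $L^2_t\dot H^1_x$ bound on a fixed power of the unknown into a space--time Lebesgue bound on another power; and (iii) H\"older's inequality, used to convert the dissipation integrals $\int D_\varepsilon(n_\varepsilon)n_\varepsilon^{-1}|\nabla n_\varepsilon|^2$ together with the Lebesgue bounds from (ii) into the gradient bounds in the statement.

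\noindent I would first handle the $u_\varepsilon$ part, which is common to both cases. The bound on $\int_0^T\int|\nabla u_\varepsilon|^2$ is already in \dref{bnmbncz2.5ghffghhhjuyuiihjj} (for $m\le 2$) and in \dref{bnmbncz2.5ghhjuyuivvbnnihjj} (for $m>2$). Combining $\sup_t\|u_\varepsilon\|_{L^2(\Omega)}\le C$ from \dref{czfvgb2.5ghhjuyuiihjj}/\dref{czfvgb2.5ghhjuyuccvviihjj} with this and the standard interpolation
\[
\|u_\varepsilon\|_{L^{10/3}(\Omega)}^{10/3}\le C\|u_\varepsilon\|_{L^2(\Omega)}^{4/3}\|\nabla u_\varepsilon\|_{L^2(\Omega)}^{2}+C\|u_\varepsilon\|_{L^2(\Omega)}^{10/3},
\]
and integrating in $t$ gives the $L^{10/3}$ bound in \dref{gghhzjscz2.ddffg5297x9630111kkhhiioott4}.

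\noindent For the case $\tfrac{10}{9}<m\le 2$, set $v_\varepsilon:=(n_\varepsilon+\varepsilon)^{m/2}$. Lemma~\ref{ghjssdeedrfe116lemma70hhjj} gives $\|v_\varepsilon\|_{L^{2/m}(\Omega)}^{2/m}\le C$, and \dref{bnmbncz2.5ghhjuyuiihjj} together with $(n_\varepsilon+\varepsilon)^{m-2}\le n_\varepsilon^{m-2}$ (valid because $m-2\le 0$) gives $\int_0^T\|\nabla v_\varepsilon\|_{L^2(\Omega)}^2\le C(T+1)$. Three-dimensional Gagliardo--Nirenberg, tuned so that the gradient exponent lands at $2$ (equivalently, $q\theta=2$), selects $q=\frac{2(3m+2)}{3m}$ and produces
\[
\int_0^T\!\!\int_\Omega (n_\varepsilon+\varepsilon)^{(3m+2)/3}\le C(T+1).
\]
With this in hand, $|\nabla n_\varepsilon|^{(3m+2)/4}$ is controlled by the H\"older split $|\nabla n_\varepsilon|^{(3m+2)/4}=[n_\varepsilon^{(m-2)/2}|\nabla n_\varepsilon|]^{(3m+2)/4}\cdot n_\varepsilon^{(2-m)(3m+2)/8}$, with conjugate $x$-exponents $\tfrac{8}{3m+2}$ and $\tfrac{8}{6-3m}$, followed by H\"older in $t$; the arithmetic reduces exactly to the product of $\int_0^T\int n_\varepsilon^{m-2}|\nabla n_\varepsilon|^2$ and $\int_0^T\int (n_\varepsilon+\varepsilon)^{(3m+2)/3}$. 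An analogous split, using $D_\varepsilon(n_\varepsilon)\le C_{\bar D}(n_\varepsilon+\varepsilon)^{m-1}$ and the identity $q(m-1)-q(m-2)/2=qm/2$ applied with $q=\tfrac{3m+2}{3m+1}$, gives the $|D_\varepsilon(n_\varepsilon)\nabla n_\varepsilon|^{(3m+2)/(3m+1)}$ bound and again reduces to the same two controlled quantities. The $|\nabla c_\varepsilon|^4$ estimate is \dref{vvcz2.5ghhjuyuiihjj}.

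\noindent For $m>2$, the pointwise bound \dref{czfvgb2.jjkkkggffgghh5ghhjuyuccvviihjj} and the entries $(n_\varepsilon+\varepsilon)^{2m-4}|\nabla n_\varepsilon|^2$, $|\nabla c_\varepsilon|^2$, $|\nabla u_\varepsilon|^2$ of \dref{bnmbncz2.5ghhjuggyuuyuivvffggbnnihjj} are immediate from Lemma~\ref{lemmaghjssddgghhmk4563025xxhjklojjkkk}. The new ingredient is the $(n_\varepsilon+\varepsilon)^{8(m-1)/3}$ bound, which I would get by the same Gagliardo--Nirenberg scheme applied now to $w_\varepsilon:=(n_\varepsilon+\varepsilon)^{m-1}$: then $\|w_\varepsilon\|_{L^\infty_tL^1_x}\le C$ and $\int_0^T\|\nabla w_\varepsilon\|_{L^2(\Omega)}^2\le C(T+1)$ force $\tfrac{1}{q}=\tfrac{\theta}{6}+1-\theta$ with $q\theta=2$, hence $q=\tfrac{8}{3}$. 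The $|D_\varepsilon(n_\varepsilon)\nabla n_\varepsilon|^{8(m-1)/(4m-1)}$ bound then follows by exactly the same H\"older decomposition as in the $m\le 2$ case, with the split $(n_\varepsilon+\varepsilon)^{q(m-1)}|\nabla n_\varepsilon|^q=[(n_\varepsilon+\varepsilon)^{m-2}|\nabla n_\varepsilon|^2]^{q/2}(n_\varepsilon+\varepsilon)^q$ and conjugate exponents $\tfrac{2}{q},\tfrac{2}{2-q}$, so that the residual exponent $\tfrac{2q}{2-q}$ equals $\tfrac{8(m-1)}{3}$. The main obstacle is purely bookkeeping: matching the Gagliardo--Nirenberg exponent and the two H\"older splits so that the residual power of $n_\varepsilon+\varepsilon$ coincides exactly with $\tfrac{3m+2}{3}$ (resp.~$\tfrac{8(m-1)}{3}$); no qualitatively new estimate beyond those in Section~2 and Lemma~\ref{kkklemmaghjmk4563025xxhjklojjkkk} is needed.
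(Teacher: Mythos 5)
Your proposal is correct and follows essentially the same route as the paper: integrate the dissipation inequalities of Lemmata \ref{lemmakkllgg4563025xxhjklojjkkk} and \ref{lemmaghjssddgghhmk4563025xxhjklojjkkk} (global in time by Lemma \ref{kkklemmaghjmk4563025xxhjklojjkkk}), apply Gagliardo--Nirenberg to $(n_\varepsilon+\varepsilon)^{m/2}$, $(n_\varepsilon+\varepsilon)^{m-1}$ and $u_\varepsilon$ to get the space--time Lebesgue bounds, and then use H\"older/Young splits against the dissipation terms for the gradient bounds, exactly as in the paper's proof. The only blemish is a typo in your $m>2$ split, where the bracket should read $\bigl[(n_\varepsilon+\varepsilon)^{2m-4}|\nabla n_\varepsilon|^2\bigr]^{q/2}$ rather than $\bigl[(n_\varepsilon+\varepsilon)^{m-2}|\nabla n_\varepsilon|^2\bigr]^{q/2}$; your stated residual exponent $2q/(2-q)=\tfrac{8(m-1)}{3}$ is the one produced by the correct bracket, so the argument is unaffected.
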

\begin{proof}
Case $\frac{10}{9}< m\leq2$: Due to Lemma \ref{lemmakkllgg4563025xxhjklojjkkk},
%
there exists $C_1>0$ such that the solution of \dref{1.1fghyuisda} satisfies
\begin{equation}
\begin{array}{rl}
&\disp{\int_{\Omega}n_{\varepsilon}\ln n_{\varepsilon}+\int_{\Omega}|\nabla\sqrt{c_{\varepsilon}}|^2+\int_{\Omega}|u_{\varepsilon}|^2\leq C_1~~
\mbox{for all}~~t>0}\\
\end{array}
\label{czfv2.5gh33456uyuiihjj}
\end{equation}
and
\begin{equation}
\begin{array}{rl}
&\disp{\int_{0}^T\int_{\Omega}  \left(|\nabla {u_{\varepsilon}}|^2+\frac{D_{\varepsilon}(n_{\varepsilon})|\nabla n_{\varepsilon}|^2}{n_{\varepsilon}}+n_{\varepsilon}^{m-2} |\nabla {n_{\varepsilon}}|^2+|\nabla {c_{\varepsilon}}|^4+c_{\varepsilon}|D^2\ln c_{\varepsilon}|^2\right)\leq C_1(T+1)}\\
\end{array}
\label{bnmbncz2.htt678hyugghiihjj}
\end{equation}
for all $T > 0.$
Now,
applying  the Gagliardo-Nirenberg inequality, \dref{vgbhssddaqwswddaassffssff3.10deerfgghhjuuloollgghhhyhh} and $\varepsilon<1,$ we derive that there exist $C_i(i=2\ldots 6)$  such that
\begin{equation}
\begin{array}{rl}
\disp\int_{0}^T\disp\int_{\Omega} (n_{\varepsilon}+\varepsilon)^{\frac{3m+2}{3}} =&\disp{\int_{0}^T\| {(n_{\varepsilon}+\varepsilon)^{\frac{m}{2}}}\|^{{\frac{2(3m+2)}{3m}}}_{L^{\frac{2(3m+2)}{3m}}(\Omega)}}\\
\leq&\disp{C_2\int_{0}^T\left(\| \nabla{(n_{\varepsilon}+\varepsilon)^{\frac{m}{2}}}\|^{2}_{L^{2}(\Omega)}\|{(n_{\varepsilon}+\varepsilon)^{\frac{m}{2}}}\|^{{\frac{4}{3m}}}_{L^{\frac{2}{m}}(\Omega)}+
\|{(n_{\varepsilon}+\varepsilon)^{\frac{m}{2}}}\|^{{\frac{2(3m+2)}{3m}}}_{L^{\frac{2}{m}}(\Omega)}\right)}\\
\leq&\disp{C_3\int_{0}^T\left(\| \nabla{(n_{\varepsilon}+\varepsilon)^{\frac{m}{2}}}\|^{2}_{L^{2}(\Omega)}[\int_{\Omega}n_{\varepsilon}+|\Omega|]^{\frac{2}{3}}
+
[\int_{\Omega}n_{\varepsilon}+|\Omega|]^{{\frac{3m+2}{3}}}\right)}\\
\leq&\disp{C_4(T+1)~~\mbox{for all}~~ T > 0}\\
\end{array}
\label{ddffbnmbncz2ddfvgbhh.htt678hyuiihjj}
\end{equation}
and
\begin{equation}
\begin{array}{rl}
\disp\int_{0}^T\disp\int_{\Omega} |u_{\varepsilon}|^{\frac{10}{3}} =&\disp{\int_{0}^T\| {u_{\varepsilon}}\|^{{\frac{10}{3}}}_{L^{\frac{10}{3}}(\Omega)}}\\
\leq&\disp{C_5\int_{0}^T\left(\| \nabla{u_{\varepsilon}}\|^{2}_{L^{2}(\Omega)}\|{u_{\varepsilon}}\|^{{\frac{4}{3}}}_{L^{2}(\Omega)}+
\|{u_{\varepsilon}}\|^{{\frac{10}{3}}}_{L^{2}(\Omega)}\right)}\\
\leq&\disp{C_6(T+1)~~\mbox{for all}~~ T > 0.}\\
\end{array}
\label{bnmbncz2ddfvgffghhbhh.htt678hyuiihjj}
\end{equation}
Now, the estimates \dref{bnmbncz2.htt678hyugghiihjj}--\dref{ddffbnmbncz2ddfvgbhh.htt678hyuiihjj} together with the Young inequality ensures
%
%
\begin{equation}
\begin{array}{rl}
\disp\int_{0}^T\disp\int_{\Omega} |\nabla n_{\varepsilon}|^{\frac{3m+2}{4}} \leq&\disp{C_7\left(\int_{0}^T\int_{\Omega}n_{\varepsilon}^{m-2} |\nabla {n_{\varepsilon}}|^2+\int_{0}^T\disp\int_{\Omega} n_{\varepsilon}^{\frac{3m+2}{3}}\right) }\\
\leq&\disp{C_8(T+1)~~\mbox{for all}~~ T > 0}\\
\end{array}
\label{bnmbncz2ddfvgffgghhbhh.htt678hyuiihjj}
\end{equation}
and some positive constants $C_7$ and $C_8.$
Utilizing  \dref{ghnjmk9161gyyhuug}, \dref{bnmbncz2.htt678hyugghiihjj} and the H\"{o}lder inequality, it yields from \dref{ddffbnmbncz2ddfvgbhh.htt678hyuiihjj} that
we can find $C_9>0$ and $C_{10} > 0$ such that
\begin{equation}
\begin{array}{rl}
\disp\int_{0}^T\disp\int_{\Omega}|D_{\varepsilon}(n_{\varepsilon})\nabla n_{\varepsilon}|^{\frac{3m+2}{3m+1}} \leq&\disp{\left[\int_{0}^T\disp\int_{\Omega}\frac{D_{\varepsilon}(n_{\varepsilon})|\nabla n_{\varepsilon}|^2}{n_{\varepsilon}}\right]^{\frac{3m+2}{6m+2}}\left[\int_{0}^T\disp\int_{\Omega}[D_{\varepsilon}(n_{\varepsilon}) n_{\varepsilon}]^{\frac{3m+2}{3m}}\right]^{\frac{3m}{6m+2}}}\\
\leq&\disp{C_{9}\left[\int_{0}^T\disp\int_{\Omega}\frac{D_{\varepsilon}(n_{\varepsilon})|\nabla n_{\varepsilon}|^2}{n_{\varepsilon}}\right]^{\frac{3m+2}{6m+2}}\left[\int_{0}^T\disp\int_{\Omega} (n_{\varepsilon}+\varepsilon)^{\frac{3m+2}{3}}\right]^{\frac{3m}{6m+2}}}\\
\leq&\disp{C_{10}(T+1)~~\mbox{for all}~~ T > 0.}\\
\end{array}
\label{ddffbnmbncz2ddfvgbhh.htt678ghhjjjddfghhhyuiihjj}
\end{equation}

Case $m>2$: By virtue of  \dref{ghnjmk9161gyyhuug} and Lemma
\ref{lemmaghjssddgghhmk4563025xxhjklojjkkk}, it follows that
\begin{equation}
\begin{array}{rl}
&\disp{\int_{\Omega}(n_{\varepsilon}+\varepsilon)^{m-1}+\int_{\Omega}   c_{\varepsilon}^2+\int_{\Omega}  | {u_{\varepsilon}}|^2\leq C_{11}~~~\mbox{for all}~~ t>0}\\
\end{array}
\label{czfvgb2.jjkkkgghh5ghhjuyuccvviihjj}
\end{equation}
and
\begin{equation}
\begin{array}{rl}
&\disp{\int_{0}^T\int_{\Omega} \left[ (D_{\varepsilon}(n_{\varepsilon}))^{\frac{2m-4}{m-1}}|\nabla n_{\varepsilon}|^2+(n_{\varepsilon}+\varepsilon)^{2m-4} |\nabla {n_{\varepsilon}}|^2+ |\nabla {c_{\varepsilon}}|^2+ |\nabla {u_{\varepsilon}}|^2\right]\leq C_{11}(T+1)}\\
\end{array}
\label{bnmbncz2.5ghhjuggyuuyuivvbnnihjj}
\end{equation}
for all $T > 0$ and a positive constant $C_{11}>0$ independent of $\varepsilon$.

Now, since \dref{czfvgb2.jjkkkgghh5ghhjuyuccvviihjj} and \dref{bnmbncz2.5ghhjuggyuuyuivvbnnihjj}, employing  the Gagliardo-Nirenberg inequality and the
H\"{o}lder inequality, we conclude  that there exist positive
 constants $C_{i}(i=12\ldots 15)$ 
such that
\begin{equation}
\begin{array}{rl}
\disp\int_{0}^T\disp\int_{\Omega}(n_{\varepsilon}+\varepsilon)^{\frac{8(m-1)}{3}} =&\disp{\int_{0}^T\| {(n_{\varepsilon}+\varepsilon)^{m-1}}\|^{{\frac{8}{3}}}_{L^{\frac{8}{3}}(\Omega)}}\\
\leq&\disp{C_{12}\int_{0}^T\left(\| \nabla{(n_{\varepsilon}+\varepsilon)^{m-1}}\|^{2}_{L^{2}(\Omega)}\|{(n_{\varepsilon}+\varepsilon)^{m-1}}\|^{{\frac{2}{3}}}_{L^{1}(\Omega)}+
\|{(n_{\varepsilon}+\varepsilon)^{m-1}}\|^{{\frac{8}{3}}}_{L^{1}(\Omega)}\right)}\\
\leq&\disp{C_{13}(T+1)~~\mbox{for all}~~ T > 0.}\\
\end{array}
\label{ddffbnmbncz2ddfvgbhh.htt678ddfghhhyuiihjj}
\end{equation}
and
\begin{equation}
\begin{array}{rl}
\disp\int_{0}^T\disp\int_{\Omega}|D_{\varepsilon}(n_{\varepsilon})\nabla n_{\varepsilon}|^{\frac{8(m-1)}{4m-1}} \leq&\disp{\left[\int_{0}^T\disp\int_{\Omega}(D_{\varepsilon}(n_{\varepsilon}))^{\frac{2m-4}{m-1}}|\nabla n_{\varepsilon}|^2\right]^{\frac{4(m-1)}{4m-1}} \left[\int_{0}^T\disp\int_{\Omega}[D_{\varepsilon}(n_{\varepsilon}) ]^{\frac{8}{3}}\right]^{\frac{3}{4m-1}} }\\
\leq&\disp{C_{14}\left[\int_{0}^T\disp\int_{\Omega}(D_{\varepsilon}(n_{\varepsilon}))^{\frac{2m-4}{m-1}}|\nabla n_{\varepsilon}|^2\right]^{\frac{4(m-1)}{4m-1}} \left[\int_{0}^T\disp\int_{\Omega}[n_{\varepsilon}+\varepsilon ]^{\frac{8(m-1)}{3}}\right]^{\frac{3}{4m-1}}}\\
\leq&\disp{C_{15}(T+1)~~\mbox{for all}~~ T > 0.}\\
\end{array}
\label{ddffbnmbncz2ddfvgffgtyybhh.htt678ghhjjjddfghhhyuiihjj}
\end{equation}
Collecting \dref{czfv2.5gh33456uyuiihjj}--\dref{ddffbnmbncz2ddfvgffgtyybhh.htt678ghhjjjddfghhhyuiihjj}, we can obtain \dref{gghhzjscz2.ddffg5297x9630111kkhhiioott4}--\dref{bnmbncz2.5ghhjuggyuuyuivvffggbnnihjj}.
\end{proof}

As a last preparation for main results, we intend to supplement Lemmata \ref{lemmakkllgg4563025xxhjklojjkkk}--\ref{lemmaghjssddgghhmk4563025xxhjklojjkkk}
with bounds on time-derivatives.
\begin{lemma}\label{lemma45630hhuujjuuyytt}
Let
\dref{dd1.1fghyuisdakkkllljjjkk} and \dref{ccvvx1.731426677gg}
 hold, and suppose that $m$ and $S$ satisfy \dref{ghnjmk9161gyyhuug} and \dref{x1.73142vghf48rtgyhu}--\dref{x1.73142vghf48gg}, respectively.
 Then for any $T>0, $
  one can find $C > 0$ independent if $\varepsilon$ such that 
\begin{equation}
 \left\{\begin{array}{ll}
 \disp\int_0^T\|\partial_tn_\varepsilon(\cdot,t)\|_{(W^{2,q}(\Omega))^*}dt  \leq C(T+1),~~\mbox{if}~~\frac{10}{9}< m<\frac{26}{21},\\
 \disp\int_0^T\|\partial_tn_\varepsilon(\cdot,t)\|_{(W^{1,3m+2}(\Omega))^*}^{\frac{3m+2}{3m+1}}dt  \leq C(T+1),~~\mbox{if}~~\frac{26}{21}\leq m\leq2,\\
  \disp\int_0^T\|\partial_tn_\varepsilon^{m-1}(\cdot,t)\|_{(W^{2,q}(\Omega))^*}dt  \leq C(T+1),~~\mbox{if}~~m>2\\
   \end{array}\right.\label{1.1ddfgeddvbnmklllhyuisda}
\end{equation}
as well as
\begin{equation}
 \left\{\begin{array}{ll}
 \disp\int_0^T\|\partial_tc_\varepsilon(\cdot,t)\|_{(W^{1,\frac{3m+2}{3m-1}}(\Omega))^*}^{\frac{3m+2}{3}}dt  \leq C(T+1),~~\mbox{if}~~\frac{10}{9}< m<2,\\
  \disp\int_0^T\|\partial_tc_\varepsilon(\cdot,t)\|_{(W^{1,2}(\Omega))^*}^2dt  \leq C(T+1),~~\mbox{if}~~m>2\\
   \end{array}\right.\label{1.1dddfgbhnjmdfgeddvbnmklllhyussddisda}
\end{equation}
and
\begin{equation}
 \left\{\begin{array}{ll}
 \disp\int_0^T\|\partial_tu_\varepsilon(\cdot,t)\|_{(W^{1,\frac{3m+2}{3m-1}}(\Omega))^*}^{\frac{3m+2}{3}}dt  \leq C(T+1),~~\mbox{if}~~\frac{10}{9}< m<\frac{4}{3},\\
  \disp\int_0^T\|\partial_tu_\varepsilon(\cdot,t)\|_{(W^{1,2}(\Omega))^*}^2dt  \leq C(T+1),~~\mbox{if}~~m\geq\frac{4}{3},\\
   \end{array}\right.\label{1.1dddfgkkllbhddffgggnjmdfgeddvbnmklllhyussddisda}
\end{equation}
where $q>3.$
\end{lemma}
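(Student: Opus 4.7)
The plan is a standard duality/testing argument, uniform across the three estimates. For each of $n_\varepsilon$, $c_\varepsilon$, $u_\varepsilon$ I test the corresponding equation in \dref{1.1fghyuisda} against an arbitrary $\varphi$ from the indicated Sobolev space, transfer all spatial derivatives onto $\varphi$ by integration by parts (using $\nabla \cdot u_\varepsilon = 0$ and the no-flux/Dirichlet boundary conditions), apply H\"older in space, and then raise the resulting pointwise-in-time bound to the stated power and integrate over $(0,T)$. The time-integrability then follows by matching each H\"older pairing against the space-time estimates \dref{gghhzjscz2.ddffg5297x9630111kkhhiioott4}--\dref{bnmbncz2.5ghhjuggyuuyuivvffggbnnihjj} supplied by the preceding lemma, together with the basic a priori information $\|n_\varepsilon\|_{L^1}$ conserved and $\|c_\varepsilon\|_{L^\infty}\leq\|c_0\|_{L^\infty(\Omega)}$ from Lemma \ref{ghjssdeedrfe116lemma70hhjj}.

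For $\partial_t n_\varepsilon$, the testing identity is
\begin{equation*}
\int_{\Omega} \partial_t n_\varepsilon \, \varphi = -\int_{\Omega} D_\varepsilon(n_\varepsilon)\nabla n_\varepsilon \cdot \nabla\varphi + \int_{\Omega} n_\varepsilon F_\varepsilon(n_\varepsilon) S_\varepsilon \nabla c_\varepsilon \cdot \nabla\varphi + \int_{\Omega} n_\varepsilon u_\varepsilon \cdot \nabla\varphi.
\end{equation*}
When $\frac{10}{9}<m<\frac{26}{21}$ I bound every term by $\|\nabla\varphi\|_{L^\infty(\Omega)}$ times an $L^1_x$ quantity (using $W^{2,q}\hookrightarrow W^{1,\infty}$ for $q>3$); the three $L^1_x$ norms are $L^1_t$ in view of \dref{gghhzjscz2.5297x9630111kkhhiioottvvbb}, the bound $|S_\varepsilon|\le S_0(\|c_0\|_{L^\infty(\Omega)})$, and the H\"older pairings $\|n_\varepsilon\|_{L^{(3m+2)/3}}\|\nabla c_\varepsilon\|_{L^4}$ and $\|n_\varepsilon\|_{L^{(3m+2)/3}}\|u_\varepsilon\|_{L^{10/3}}$ coming from \dref{gghhzjscz2.ddffg5297x9630111kkhhiioott4}--\dref{gghhzjscz2.5297x9630111kkhhiioottvvbb}. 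For $\frac{26}{21}\le m\le 2$ the leading pairing $\|D_\varepsilon(n_\varepsilon)\nabla n_\varepsilon\|_{L^{(3m+2)/(3m+1)}}\|\nabla\varphi\|_{L^{3m+2}}$ is integrable to the exact power $\frac{3m+2}{3m+1}$ in time, yielding the stronger conclusion with test space $W^{1,3m+2}(\Omega)$. For $m>2$ I use $\partial_t n_\varepsilon^{m-1}=(m-1)n_\varepsilon^{m-2}\partial_t n_\varepsilon$, multiply the first equation of \dref{1.1fghyuisda} by $n_\varepsilon^{m-2}\varphi$ and integrate by parts; the resulting terms involve $(n_\varepsilon+\varepsilon)^{2m-4}|\nabla n_\varepsilon|^2$ and $(n_\varepsilon+\varepsilon)^{m-1}u_\varepsilon$, both controlled by \dref{bnmbncz2.5ghhjuggyuuyuivvffggbnnihjj} after H\"older with $\|\nabla\varphi\|_{L^\infty}$.

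For $\partial_t c_\varepsilon$ the identity
\begin{equation*}
\int_{\Omega} \partial_t c_\varepsilon \, \varphi = -\int_{\Omega} \nabla c_\varepsilon \cdot \nabla\varphi - \int_{\Omega} n_\varepsilon c_\varepsilon \varphi + \int_{\Omega} c_\varepsilon u_\varepsilon \cdot \nabla\varphi
\end{equation*}
is treated analogously: $\|c_\varepsilon\|_{L^\infty}$ absorbs the $c_\varepsilon$-factors, while the embedding $W^{1,(3m+2)/(3m-1)}\hookrightarrow L^\infty$ dominates $\|\varphi\|_{L^\infty}$ in the absorption term; the two cases of \dref{1.1dddfgbhnjmdfgeddvbnmklllhyussddisda} correspond to the two integrability regimes \dref{gghhzjscz2.ddffg5297x9630111kkhhiioott4}--\dref{gghhzjscz2.5297x9630111kkhhiioottvvbb} and \dref{czfvgb2.jjkkkggffgghh5ghhjuyuccvviihjj}--\dref{bnmbncz2.5ghhjuggyuuyuivvffggbnnihjj}. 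For $\partial_t u_\varepsilon$ I test against a divergence-free $\varphi$ so that the pressure gradient drops out; the Stokes term is bounded by $\|\nabla u_\varepsilon\|_{L^2}\|\nabla\varphi\|_{L^2}$, the convective term via $\|Y_\varepsilon u_\varepsilon\|_{L^p}\le\|u_\varepsilon\|_{L^p}$ together with $u_\varepsilon\in L^{10/3}_{t,x}$, and the forcing by $\|\nabla\phi\|_{L^\infty(\Omega)}\|n_\varepsilon\|_{L^p}\|\varphi\|_{L^{p'}}$. The threshold $m=\frac{4}{3}$ is exactly where $n_\varepsilon\in L^{4/3}_{t,x}$ becomes available (via \dref{gghhzjscz2.5297x9630111kkhhiioottvvbb} or \dref{bnmbncz2.5ghhjuggyuuyuivvffggbnnihjj}) so that one can test against $W^{1,2}$; below it one must enlarge the test space to $W^{1,(3m+2)/(3m-1)}$.

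The only real difficulty is bookkeeping. For each of the three unknowns and each exponent regime one must verify that the chosen H\"older triple, raised to the time exponent listed in \dref{1.1ddfgeddvbnmklllhyuisda}--\dref{1.1dddfgkkllbhddffgggnjmdfgeddvbnmklllhyussddisda}, is captured precisely by one of the a priori integrals, and that the critical thresholds $m\in\{\tfrac{26}{21},\tfrac{4}{3},2\}$ line up with the transitions announced in the statement; no new analytic idea beyond H\"older, Sobolev embedding, and the already established space-time bounds is required.
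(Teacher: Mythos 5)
Your proposal is correct and follows essentially the same route as the paper: test each equation against $\varphi$ in the indicated Sobolev space (divergence-free for the fluid equation, and $(m-1)n_\varepsilon^{m-2}\varphi$ for the $m>2$ case of the first equation), integrate by parts, apply H\"older and Young, and match the resulting space-time integrals against the bounds of the preceding lemma, with the same case splits at $m=\tfrac{26}{21},\ 2,\ \tfrac{4}{3}$. One small correction to your bookkeeping: the threshold $m=\tfrac{4}{3}$ for the fluid estimate arises because $\tfrac{3m+2}{3}\ge 2$ there, i.e.\ $n_\varepsilon$ becomes bounded in $L^{2}_{t,x}$ (not $L^{4/3}_{t,x}$), which is what the forcing term $\int_\Omega n_\varepsilon\nabla\phi\cdot\varphi$ needs when $\varphi$ ranges over $W^{1,2}$.
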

\begin{proof}
Case $\frac{26}{21}\leq m\leq2$:
Firstly,  testing the first equation of \dref{1.1fghyuisda}
 by certain   $\varphi\in C^{\infty}(\bar{\Omega})$, we have
 \begin{equation}
\begin{array}{rl}
&\disp\left|\int_{\Omega}\partial_{t}n_{\varepsilon}(\cdot,t)\varphi\right|\\
 =&\disp{\left|\int_{\Omega}\left[\nabla\cdot(D_{\varepsilon}(n_{\varepsilon})\nabla n_{\varepsilon})-\nabla\cdot(n_{\varepsilon}F_{\varepsilon}(n_{\varepsilon})S_\varepsilon(x, n_{\varepsilon}, c_{\varepsilon})\cdot\nabla c_{\varepsilon})-u_{\varepsilon}\cdot\nabla n_{\varepsilon}\right]\cdot\varphi\right|}
\\
=&\disp{\left|-\int_\Omega D_{\varepsilon}(n_{\varepsilon})\nabla n_{\varepsilon}\cdot\nabla\varphi+\int_\Omega n_{\varepsilon}F_{\varepsilon}(n_{\varepsilon})S_\varepsilon(x, n_{\varepsilon}, c_{\varepsilon})\cdot\nabla c_{\varepsilon}\cdot\nabla\varphi+\int_\Omega n_{\varepsilon}u_\varepsilon\cdot\nabla\varphi\right|}\\
\leq&\disp{\left\{\|D_{\varepsilon}(n_{\varepsilon})\nabla n_{\varepsilon}\|_{L^{\frac{3m+2}{3m+1}}(\Omega)}+\|n_{\varepsilon}F_{\varepsilon}(n_{\varepsilon})S_\varepsilon(x, n_{\varepsilon}, c_{\varepsilon})\cdot\nabla c_{\varepsilon}\|_{L^{\frac{3m+2}{3m+1}}(\Omega)}+\|n_{\varepsilon}u_\varepsilon\|_{L^{\frac{3m+2}{3m+1}}(\Omega)}\right\}\|\varphi\|_{W^{1,3m+2}(\Omega)}
}\\
\end{array}
\label{gbhncvbmdcfvgcz2.5ghju48}
\end{equation}
for all $t>0$.
Hence, with the help of \dref{gghhzjscz2.5297x9630111kkhhiioottvvbb} and \dref{x1.73142vghf48gg}, we derive that
\begin{equation}
\begin{array}{rl}
&\disp\int_0^T\|\partial_{t}n_{\varepsilon}(\cdot,t)\|^{\frac{3m+2}{3m+1}}_{(W^{1,3m+2}(\Omega))^*}dt\\
\leq&\disp{\int_0^T\left\{\|D_{\varepsilon}(n_{\varepsilon})\nabla n_{\varepsilon}\|_{L^{\frac{3m+2}{3m+1}}(\Omega)}+\|n_{\varepsilon}F_{\varepsilon}(n_{\varepsilon})S_\varepsilon(x, n_{\varepsilon}, c_{\varepsilon})\cdot\nabla c_{\varepsilon}\|_{L^{\frac{3m+2}{3m+1}}(\Omega)}+\|n_{\varepsilon}u_\varepsilon\|_{L^{\frac{3m+2}{3m+1}}(\Omega)}\right\}^{\frac{3m+2}{3m+1}}dt
}\\
\leq&\disp{C_1\left\{\int_0^T\int_{\Omega}|D_{\varepsilon}(n_{\varepsilon})\nabla n_{\varepsilon}|^{\frac{3m+2}{3m+1}}+\int_0^T\int_{\Omega}|n_{\varepsilon}F_{\varepsilon}(n_{\varepsilon})S_\varepsilon(x, n_{\varepsilon}, c_{\varepsilon})\cdot\nabla c_{\varepsilon}|^{\frac{3m+2}{3m+1}}+\int_0^T\int_{\Omega}|n_{\varepsilon}u_\varepsilon|^{\frac{3m+2}{3m+1}}\right\}
}\\
\leq&\disp{C_2(T+1)+C_1[S(\|c_0\|_{L^\infty(\Omega)})]^{\frac{3m+2}{3m+1}}\int_0^T\int_{\Omega}|n_{\varepsilon} \nabla c_{\varepsilon}|^{\frac{3m+2}{3m+1}}dt+C_1\int_0^T\int_{\Omega}|n_{\varepsilon}u_\varepsilon|^{\frac{3m+2}{3m+1}}
}\\
\end{array}
\label{gbhncvbmdcxxcdfvgbfvgcz2.5ghju48}
\end{equation}
for all $T>0$ and some positive constants $C_1$ and $C_2$.
In what follows, we shall estimate each term on the right-hand side of \dref{gbhncvbmdcxxcdfvgbfvgcz2.5ghju48} term by term.
Next, applying \dref{gghhzjscz2.ddffg5297x9630111kkhhiioott4}, \dref{gghhzjscz2.5297x9630111kkhhiioottvvbb}, \dref{bnmbncz2.htt678hyugghiihjj}, the Young inequality and employing $m\geq\frac{26}{21}$, we conclude that
\begin{equation}
\begin{array}{rl}
\disp\int_0^T\disp\int_{\Omega}|n_{\varepsilon} \nabla c_{\varepsilon}|^{\frac{3m+2}{3m+1}}&\leq\disp{\int_0^T\int_{\Omega}n_{\varepsilon}^{\frac{3m+2}{3}}+ \int_0^T\int_{\Omega}|\nabla c_{\varepsilon}|^{\frac{3m+2}{3m-2}}
}\\
&\leq\disp{\int_0^T\int_{\Omega}n_{\varepsilon}^{\frac{3m+2}{3}}+ \int_0^T\int_{\Omega}|\nabla c_{\varepsilon}|^{4}+|\Omega|T
}\\
&\leq\disp{C_3(T+1)~~\mbox{for all}~~ T>0
}\\
\end{array}
\label{gbhncvbmdcxxcccvffhhjdfvgbfvgcz2.5ghju48}
\end{equation}
and
\begin{equation}
\begin{array}{rl}
\disp\int_0^T\disp\int_{\Omega}|n_{\varepsilon}u_\varepsilon|^{\frac{3m+2}{3m+1}}&\leq\disp{\int_0^T\int_{\Omega}n_{\varepsilon}^{\frac{3m+2}{3}}+ \int_0^T\int_{\Omega}|u_{\varepsilon}|^{\frac{3m+2}{3m-2}}
}\\
&\leq\disp{\int_0^T\int_{\Omega}n_{\varepsilon}^{\frac{3m+2}{3}}+ \int_0^T\int_{\Omega}|u_{\varepsilon}|^{\frac{10}{3}}+|\Omega|T
}\\
&\leq\disp{C_4(T+1)~~\mbox{for all}~~ T>0.
}\\
\end{array}
\label{gbhncvbmdcddddddddffffffffffxxcdfvgbfvgcz2.5ghju48}
\end{equation}
and some positive constants $C_3$ and $C_4$.
Now, collecting  \dref{gbhncvbmdcxxcdfvgbfvgcz2.5ghju48}--\dref{gbhncvbmdcddddddddffffffffffxxcdfvgbfvgcz2.5ghju48} yields to
\begin{equation}
\begin{array}{rl}
\disp\int_0^T\|\partial_{t}n_{\varepsilon}(\cdot,t)\|^{\frac{3m+2}{3m+1}}_{(W^{1,3m+2}(\Omega))^*}dt\leq
C_5(T+1)~~\mbox{for all}~~ T>0.
\\
\end{array}
\label{gbhncvbxxaasqqwmdcxxcdfvgbfvgcz2.5ghju48}
\end{equation}
and a constant $C_5>0.$

Case $\frac{10}{9}< m<\frac{26}{21}$: Similarly, we also derive that
\begin{equation}
\begin{array}{rl}
&\disp\left|\int_{\Omega}\partial_{t}n_{\varepsilon}(\cdot,t)\varphi\right|\\
\leq&\disp{\left\{\|D_{\varepsilon}(n_{\varepsilon})\nabla n_{\varepsilon}\|_{L^{1}(\Omega)}+\|n_{\varepsilon}F_{\varepsilon}(n_{\varepsilon})S_\varepsilon(x, n_{\varepsilon}, c_{\varepsilon})\cdot\nabla c_{\varepsilon}\|_{L^{1}(\Omega)}+\|n_{\varepsilon}u_\varepsilon\|_{L^{1}(\Omega)}\right\}\|\varphi\|_{W^{1,\infty}(\Omega)}
}\\
\end{array}
\label{yyyygbhncvbmdcfvgcz2.5ghju48}
\end{equation}
for all $t>0$.
Hence, due to the embedding $W^{2,q }(\Omega)\hookrightarrow  W^{1,\infty}(\Omega)(q>3)$, we deduce from $C_6$ and $C_7 $ such
that
\begin{equation}
\begin{array}{rl}
&\disp\int_0^T\|\partial_{t}n_{\varepsilon}(\cdot,t)\|_{(W^{2,q }(\Omega))^*}dt\\
\leq&\disp{C_6\left\{\int_0^T\int_{\Omega}|D_{\varepsilon}(n_{\varepsilon})\nabla n_{\varepsilon}|^{\frac{3m+2}{3m+1}}+\int_0^T\int_{\Omega}|\nabla c_{\varepsilon}|^{4}+\int_0^T\int_{\Omega}n_{\varepsilon}^{\frac{3m+2}{3}}+\int_0^T\int_{\Omega}|u_\varepsilon|^{\frac{10}{3}}+T+1\right\}
}\\
\leq&\disp{C_7(T+1)
}\\
\end{array}
\label{yyygbhncvbmdcxxcdfvgbfvgcz2.5ghju48}
\end{equation}
for all $T>0$. 
Next, in view of \dref{gghhzjscz2.ddffg5297x9630111kkhhiioott4}, \dref{gghhzjscz2.5297x9630111kkhhiioottvvbb}, \dref{bnmbncz2.htt678hyugghiihjj}, the Young inequality and  $m\geq\frac{26}{21}$, we may find some positive constants $C_8$ and $C_9$ such that
\begin{equation}
\begin{array}{rl}
\disp\int_0^T\disp\int_{\Omega}|n_{\varepsilon} \nabla c_{\varepsilon}|^{\frac{3m+2}{3m+1}}&\leq\disp{\int_0^T\int_{\Omega}n_{\varepsilon}^{\frac{3m+2}{3}}+ \int_0^T\int_{\Omega}|\nabla c_{\varepsilon}|^{\frac{3m+2}{3m-2}}
}\\
&\leq\disp{\int_0^T\int_{\Omega}n_{\varepsilon}^{\frac{3m+2}{3}}+ \int_0^T\int_{\Omega}|\nabla c_{\varepsilon}|^{4}+|\Omega|T
}\\
&\leq\disp{C_8(T+1)~~\mbox{for all}~~ T>0
}\\
\end{array}
\label{yyyygbhncvbmdcxxcccvffhhjdfvgbfvgcz2.5ghju48}
\end{equation}
and
\begin{equation}
\begin{array}{rl}
\disp\int_0^T\disp\int_{\Omega}|n_{\varepsilon}u_\varepsilon|^{\frac{3m+2}{3m+1}}&\leq\disp{\int_0^T\int_{\Omega}n_{\varepsilon}^{\frac{3m+2}{3}}+ \int_0^T\int_{\Omega}|u_{\varepsilon}|^{\frac{3m+2}{3m-2}}
}\\
&\leq\disp{\int_0^T\int_{\Omega}n_{\varepsilon}^{\frac{3m+2}{3}}+ \int_0^T\int_{\Omega}|u_{\varepsilon}|^{\frac{10}{3}}+|\Omega|T
}\\
&\leq\disp{C_9(T+1)~~\mbox{for all}~~ T>0.
}\\
\end{array}
\label{yyyygbhncvbmdcddddddddffffffffffxxcdfvgbfvgcz2.5ghju48}
\end{equation}

Case $m>2$:
Next,
testing the first equation of \dref{1.1fghyuisda}
 by certain   $({m-1})n_{\varepsilon}^{m-2}\varphi\in C^{\infty}(\bar{\Omega})$ and using \dref{ghnjmk9161gyyhuug}, we have
 \begin{equation}
\begin{array}{rl}
&\disp\left|\int_{\Omega}(n_{\varepsilon}^{m-1})_{t}\varphi\right|\\
 =&\disp{\left|\int_{\Omega}\left[\nabla\cdot(D_{\varepsilon}(n_{\varepsilon})\nabla n_{\varepsilon})-\nabla\cdot(n_{\varepsilon}F_{\varepsilon}(n_{\varepsilon})S_\varepsilon(x, n_{\varepsilon}, c_{\varepsilon})\cdot\nabla c_{\varepsilon})-u_{\varepsilon}\cdot\nabla n_{\varepsilon}\right]\cdot({m-1})n_{\varepsilon}^{m-2}\varphi\right|}
\\
\leq&\disp{\left|-(m-1)\int_\Omega \left[D_{\varepsilon}(n_{\varepsilon})n_{\varepsilon}^{m-2}\nabla n_{\varepsilon}\cdot\nabla\varphi+(m-2) D_{\varepsilon}(n_{\varepsilon})n_{\varepsilon}^{m-3}|\nabla n_{\varepsilon}|^2\varphi\right]\right|}\\
&+\disp{(m-1)\left|\int_\Omega[(m-2) n_{\varepsilon}^{m-2}F_{\varepsilon}(n_{\varepsilon})S_\varepsilon(x, n_{\varepsilon}, c_{\varepsilon})\nabla n_{\varepsilon}\cdot\nabla c_{\varepsilon}\varphi+ n_{\varepsilon}^{m-1}F_{\varepsilon}(n_{\varepsilon})S_\varepsilon(x, n_{\varepsilon}, c_{\varepsilon})\nabla c_{\varepsilon}\cdot\nabla \varphi]\right|}\\
&\disp{+\left|\int_\Omega n_{\varepsilon}^{m-1}u_\varepsilon\cdot\nabla\varphi\right|}\\
\leq&\disp{(m-1)\int_\Omega \left[C_{\bar{D}}(n_{\varepsilon}+\varepsilon)^{m-1}n_{\varepsilon}^{m-2}|\nabla n_{\varepsilon}||\nabla\varphi|+(m-2)C_{\bar{D}} (n_{\varepsilon}+\varepsilon)^{m-1}n_{\varepsilon}^{m-3}|\nabla n_{\varepsilon}|^2|\varphi|\right]}\\
&+\disp{(m-1)\left|\int_\Omega[(m-2) n_{\varepsilon}^{m-2}F_{\varepsilon}(n_{\varepsilon})S_\varepsilon(x, n_{\varepsilon}, c_{\varepsilon})\nabla n_{\varepsilon}\cdot\nabla c_{\varepsilon}\varphi+ n_{\varepsilon}^{m-1}F_{\varepsilon}(n_{\varepsilon})S_\varepsilon(x, n_{\varepsilon}, c_{\varepsilon})\nabla c_{\varepsilon}\cdot\nabla \varphi]\right|}\\
&\disp{+\left|\int_\Omega n_{\varepsilon}^{m-1}u_\varepsilon\cdot\nabla\varphi\right|}\\
\leq&\disp{m(m-1)C_{\bar{D}}\left\{\int_\Omega \left[(n_{\varepsilon}+\varepsilon)^{m-1}n_{\varepsilon}^{m-2}|\nabla n_{\varepsilon}|+ (n_{\varepsilon}+\varepsilon)^{m-1}n_{\varepsilon}^{m-3}|\nabla n_{\varepsilon}|^2\right]\right\}\|\varphi\|_{W^{1,\infty}(\Omega)}}\\
&+\disp{(m-1)^2[S(\|c_0\|_{L^\infty(\Omega)})+1]\left\{\int_\Omega[ n_{\varepsilon}^{m-2}|\nabla n_{\varepsilon}||\nabla c_{\varepsilon}|+ n_{\varepsilon}^{m-1}|\nabla c_{\varepsilon}|+ n_{\varepsilon}^{m-1}|u_\varepsilon|]\right\}\|\varphi\|_{W^{1,\infty}(\Omega)}}\\
\end{array}
\label{gbhncvbmdcfvgcz2.5ghju48}
\end{equation}
for all $t>0$.
Hence, observe that the embedding $W^{2,q }(\Omega)\hookrightarrow  W^{1,\infty}(\Omega)(q>3)$, due to \dref{bnmbncz2.5ghhjuggyuuyuivvffggbnnihjj}, \dref{bnmbncz2.5ghhjuyuivvbnnihjj} and \dref{bnmbncz2ddfvgffghhbhh.htt678hyuiihjj}, applying $m>2$ and the Young inequality, we derive  $C_1,C_2$ and $C_3$ such
that
\begin{equation}
\begin{array}{rl}
&\disp\int_0^T\|\partial_{t}n_{\varepsilon}^{m-1}(\cdot,t)\|_{(W^{2,q }(\Omega))^*}dt\\
\leq&\disp{C_1\left\{\int_0^T\int_{\Omega}(n_{\varepsilon}+\varepsilon)^{2m-4}|\nabla n_{\varepsilon}|^{2}+\int_0^T\int_{\Omega}n_{\varepsilon}^{2m-2}+\int_0^T\int_{\Omega}|\nabla c_{\varepsilon}|^{2}+\int_0^T\int_{\Omega}|u_\varepsilon|^{2}\right\}
}\\
\leq&\disp{C_2\left\{\int_0^T\int_{\Omega}(n_{\varepsilon}+\varepsilon)^{2m-4}|\nabla n_{\varepsilon}|^{2}+\int_0^T\int_{\Omega}|\nabla c_{\varepsilon}|^{2}+\int_0^T\int_{\Omega}n_{\varepsilon}^{\frac{8(m-1)}{3}}+\int_0^T\int_{\Omega}|u_\varepsilon|^{\frac{10}{3}}+T\right\}
}\\
\leq&\disp{C_3(T+1)~~\mbox{for all}~~ T > 0,
}\\
\end{array}
\label{yyygbhncvbddffmdcxxcdfvgbddffvgcz2.5ghju48}
\end{equation}
which leads directly to
 \begin{equation}
 \begin{array}{ll}
  \disp\int_0^T\|\partial_tn_\varepsilon^{m-1}(\cdot,t)\|_{(W^{2,q}(\Omega))^*}dt  \leq C_4(T+1)\\
   \end{array}\label{1.1dhhttyuuiidfgeddvbnmklllhyuisda}
\end{equation}
for a positive constant $C_4.$

Now, in view of
\dref{gbhncvbxxaasqqwmdcxxcdfvgbfvgcz2.5ghju48},
\dref{yyygbhncvbmdcxxcdfvgbfvgcz2.5ghju48} and
\dref{1.1dhhttyuuiidfgeddvbnmklllhyuisda}, we can derive \dref{1.1ddfgeddvbnmklllhyuisda}.

Case $m>2:$
Likewise, given any $\varphi\in  C^\infty(\bar{\Omega})$, we may test the second equation in \dref{1.1fghyuisda} against
$\varphi$
to conclude  that
\begin{equation}
\begin{array}{rl}
\disp\left|\int_{\Omega}\partial_{t}c_{\varepsilon}(\cdot,t)\varphi\right|=&\disp{\left|\int_{\Omega}\left[\Delta c_{\varepsilon}-n_{\varepsilon}c_{\varepsilon}-u_{\varepsilon}\cdot\nabla c_{\varepsilon}\right]\cdot\varphi\right|}
\\
=&\disp{\left|-\int_\Omega \nabla c_{\varepsilon}\cdot\nabla\varphi-\int_\Omega n_{\varepsilon} c_{\varepsilon}\varphi+\int_\Omega c_{\varepsilon}u_\varepsilon\cdot\nabla\varphi\right|}\\
\leq&\disp{\left\{\|\nabla c_{\varepsilon}\|_{L^{2}(\Omega)}+\|n_{\varepsilon}c_{\varepsilon} \|_{L^{2}(\Omega)}+\|c_{\varepsilon}u_\varepsilon\|_{L^{2}(\Omega)}\right\}\|\varphi\|_{W^{1,2}(\Omega)}
~~\mbox{for all}~~ t>0,}\\
\end{array}
\label{gbhncvbmdcfxxdcvbccvbbvgcz2.5ghju48}
\end{equation}
from which, after using \dref{hnjmssddaqwswddaassffssff3.10deerfgghhjuuloollgghhhyhh}, \dref{gghhzjscz2.ddffg5297x9630111kkhhiioott4} and \dref{bnmbncz2.5ghhjuggyuuyuivvffggbnnihjj}, we conclude  that there exist positive constants $C_{15},C_{16}$ and $C_{17}$ such that
\begin{equation}
\begin{array}{rl}
&\disp\int_0^T\|\partial_{t}c_{\varepsilon}(\cdot,t)\|^{2}_{(W^{1,2}(\Omega))^*}dt\\
\leq&\disp{C_{15}\int_0^T\int_\Omega|\nabla c_{\varepsilon}|^{2}+C_{15}\int_0^T\int_\Omega n_{\varepsilon}^2+C_{15}\int_0^T\int_\Omega |u_\varepsilon|^2}\\
\leq&\disp{C_{15}\int_0^T\int_\Omega|\nabla c_{\varepsilon}|^{2}+C_{15}\|_{L^\infty(\Omega)}^2\int_0^T\int_\Omega n_{\varepsilon}^{\frac{8(m-1)}{3}}+C_{15}\|c_{0}\|_{L^\infty(\Omega)}^2\int_0^T\int_\Omega |u_\varepsilon|^{\frac{10}{3}}+C_{16}T}\\
\leq&\disp{C_{17}(T+1)
~~\mbox{for all}~~ T>0.}\\
\end{array}
\label{gbhncvbmdcfxxxcvbddfghnxdcvbbbvgcz2.5ghju48}
\end{equation}

Case $\frac{10}{9}<m\leq 2:$
For any given $t>0$ and $\varphi\in  C^\infty(\bar{\Omega})$, multiplying  the second equation of \dref{1.1fghyuisda} by
$\varphi$, we derive that
$$
\begin{array}{rl}
\disp\left|\int_{\Omega}\partial_{t}c_{\varepsilon}(\cdot,t)\varphi\right|\leq&\disp{\left\{\|\nabla c_{\varepsilon}\|_{L^{\frac{3m+2}{3}}(\Omega)}+
\|n_{\varepsilon}c_{\varepsilon} \|_{L^{\frac{3m+2}{3}}(\Omega)}+\|c_{\varepsilon}u_\varepsilon\|_{L^{\frac{3m+2}{3}}(\Omega)}\right\}
\|\varphi\|_{W^{1,\frac{3m+2}{3m-1}}(\Omega)}.}\\
\end{array}
$$
Hence, \dref{hnjmssddaqwswddaassffssff3.10deerfgghhjuuloollgghhhyhh}, \dref{gghhzjscz2.ddffg5297x9630111kkhhiioott4} and \dref{gghhzjscz2.5297x9630111kkhhiioottvvbb} imply   that
\begin{equation}
\begin{array}{rl}
&\disp\int_0^T\|\partial_{t}c_{\varepsilon}(\cdot,t)\|^{\frac{3m+2}{3}}_{(W^{1,\frac{3m+2}{3m-1}}(\Omega))^*}dt\\
\leq&\disp{C_{18}\int_0^T\int_\Omega|\nabla c_{\varepsilon}|^{\frac{3m+2}{3}}+C_{18}\int_0^T\int_\Omega n_{\varepsilon}^{\frac{3m+2}{3}}+C_{18}\int_0^T\int_\Omega |u_\varepsilon|^{\frac{3m+2}{3}}}\\
\leq&\disp{C_{18}\int_0^T\int_\Omega|\nabla c_{\varepsilon}|^{4}+C_{18}\int_0^T\int_\Omega n_{\varepsilon}^{\frac{3m+2}{3}}+C_{18}\int_0^T\int_\Omega |u_\varepsilon|^{\frac{10}{3}}+C_{19}T}\\
\leq&\disp{C_{20}(T+1)
~~\mbox{for all}~~ T>0.}\\
\end{array}
\label{gbhncvbmdcfxxdffxcvbnxdcvbbbvgcz2.5ghjddvccvvvbu48}
\end{equation}
and some positive constants $C_{18},C_{19}$ and $C_{20}.$

Now, combining   \dref{gbhncvbmdcfxxxcvbddfghnxdcvbbbvgcz2.5ghju48}  and \dref{gbhncvbmdcfxxdffxcvbnxdcvbbbvgcz2.5ghjddvccvvvbu48}, we can get \dref{1.1dddfgbhnjmdfgeddvbnmklllhyussddisda}.

Finally, for any given  $\varphi\in C^{\infty}_{0,\sigma} (\Omega;\mathbb{R}^3)$, we infer from the third equation in \dref{1.1fghyuisda} that
\begin{equation}
\begin{array}{rl}
\disp\left|\int_{\Omega}\partial_{t}u_{\varepsilon}(\cdot,t)\varphi\right|=&\disp{\left|-\int_\Omega \nabla u_{\varepsilon}\cdot\nabla\varphi-\int_\Omega (Y_{\varepsilon}u_{\varepsilon}\otimes u_{\varepsilon})\cdot\nabla\varphi+\int_\Omega n_{\varepsilon}\nabla \phi\cdot\varphi\right|
~~\mbox{for all}~~ t>0.}\\
\end{array}
\label{gbhncvbmdcfxxdcvbccvqqwerrbbvgcz2.5ghju48}
\end{equation}
Case $\frac{10}{9}<m<\frac{4}{3}$: Due to \dref{gghhzjscz2.ddffg5297x9630111kkhhiioott4}, \dref{gghhzjscz2.5297x9630111kkhhiioottvvbb} and \dref{ssdcfvgdhhjjdfghgghjjnnhhkklld911cz2.5ghju48}, there exist some positive constants $C_{21},C_{22}, C_{23}$ and $C_{24}$ such that
\begin{equation}
\begin{array}{rl}
&\disp\int_0^T\|\partial_{t}u_{\varepsilon}(\cdot,t)\|^{\frac{3m+2}{3}}_{(W^{1,\frac{3m+2}{3m-1}}(\Omega))^*}dt\\
\leq&\disp{C_{21}\int_0^T\int_\Omega|\nabla u_{\varepsilon}|^{\frac{3m+2}{3}}+C_{21}\int_0^T\int_\Omega |Y_{\varepsilon}u_{\varepsilon}\otimes u_{\varepsilon}|^{\frac{3m+2}{3}}+C_{21}\int_0^T\int_\Omega n_\varepsilon^{\frac{3m+2}{3}}}\\
\leq&\disp{C_{21}\int_0^T\int_\Omega|\nabla u_{\varepsilon}|^{2}+C_{21}\int_0^T\int_\Omega |Y_{\varepsilon}u_\varepsilon|^{2}+C_{21}\int_0^T\int_\Omega n_{\varepsilon}^{\frac{3m+2}{3}}+C_{22}T}\\
\leq&\disp{C_{21}\int_0^T\int_\Omega|\nabla u_{\varepsilon}|^{2}+C_{21}\int_0^T\int_\Omega |u_\varepsilon|^{\frac{10}{3}}+C_{21}\int_0^T\int_\Omega n_{\varepsilon}^{\frac{3m+2}{3}}+C_{23}T}\\
\leq&\disp{C_{24}(T+1)
~~\mbox{for all}~~ T>0.}\\
\end{array}
\label{gbhncvbmdcfxxdffxcvbnxdcyyuiivbbbvgcz2.5ghjddvccvvvbu48}
\end{equation}
Case $\frac{4}{3}\leq m\leq2$: Similarly,   by \dref{gghhzjscz2.ddffg5297x9630111kkhhiioott4}, \dref{gghhzjscz2.5297x9630111kkhhiioottvvbb} and \dref{ssdcfvgdhhjjdfghgghjjnnhhkklld911cz2.5ghju48}, we may find some positive constants $C_{25}, C_{26},C_{27}$ and $C_{28}$ such that
\begin{equation}
\begin{array}{rl}
&\disp\int_0^T\|\partial_{t}u_{\varepsilon}(\cdot,t)\|^{2}_{(W^{1,2}(\Omega))^*}dt\\
\leq&\disp{C_{25}\int_0^T\int_\Omega|\nabla u_{\varepsilon}|^{2}+C_{25}\int_0^T\int_\Omega |Y_{\varepsilon}u_{\varepsilon}\otimes u_{\varepsilon}|^{2}+C_{25}\int_0^T\int_\Omega n_\varepsilon^{2}}\\
\leq&\disp{C_{25}\int_0^T\int_\Omega|\nabla u_{\varepsilon}|^{2}+C_{25}\int_0^T\int_\Omega |Y_{\varepsilon}u_\varepsilon|^{2}+C_{25}\int_0^T\int_\Omega n_{\varepsilon}^{\frac{3m+2}{3}}+C_{26}T}\\
\leq&\disp{C_{25}\int_0^T\int_\Omega|\nabla u_{\varepsilon}|^{2}+C_{25}\int_0^T\int_\Omega |u_\varepsilon|^{\frac{10}{3}}+C_{25}\int_0^T\int_\Omega n_{\varepsilon}^{\frac{3m+2}{3}}+C_{27}T}\\
\leq&\disp{C_{28}(T+1)
~~\mbox{for all}~~ T>0.}\\
\end{array}
\label{gbhncvbmdcfxxxcvbddfghnxdckkvbgtyyiiobddfffbvgcz2.5ghju48}
\end{equation}

Case $m>2$: Now,   in view of \dref{gghhzjscz2.ddffg5297x9630111kkhhiioott4}, \dref{bnmbncz2.5ghhjuggyuuyuivvffggbnnihjj} and \dref{ssdcfvgdhhjjdfghgghjjnnhhkklld911cz2.5ghju48}, we also derive that
\begin{equation}
\begin{array}{rl}
&\disp\int_0^T\|\partial_{t}u_{\varepsilon}(\cdot,t)\|^{2}_{(W^{1,2}(\Omega))^*}dt\\
\leq&\disp{C_{29}\int_0^T\int_\Omega|\nabla u_{\varepsilon}|^{2}+C_{29}\int_0^T\int_\Omega |u_\varepsilon|^{\frac{10}{3}}+C_{29}\int_0^T\int_\Omega n_{\varepsilon}^{\frac{8(m-1)}{3}}+C_{29}T}\\
\leq&\disp{C_{30}(T+1)
~~\mbox{for all}~~ T>0.}\\
\end{array}
\label{gbhncvbmdcfxxxcvxxcvvbddfghnxdddffckkvbgtyyiiobddfffbvgcz2.5ghju48}
\end{equation}
and
some positive constants $C_{29}$ and $C_{30}.$
Finally,  in conjunction with \dref{gbhncvbmdcfxxdffxcvbnxdcyyuiivbbbvgcz2.5ghjddvccvvvbu48}--\dref{gbhncvbmdcfxxxcvxxcvvbddfghnxdddffckkvbgtyyiiobddfffbvgcz2.5ghju48}, we can get the results.
\end{proof}
In the following Lemma, we shall give some spatial estimates on $n_{\varepsilon}F_{\varepsilon}(n_{\varepsilon})S_\varepsilon(x, n_{\varepsilon}, c_{\varepsilon})\cdot\nabla c_{\varepsilon}$ and $u_\varepsilon\cdot\nabla c_\varepsilon$, which is crucial to derive the existence of weak solution to problem \dref{1.1}.
\begin{lemma}\label{4455lemma45630hhuujjuuyytt}
Assume that
 \begin{equation}
 \gamma_1:=\left\{\begin{array}{ll}
 {\frac{4(3m+2)}{3m+14}},~~\mbox{if}~~\frac{10}{9}< m\leq2,\\
 {\frac{8(m-1)}{4m-1}},~~\mbox{if}~~m>2\\
   \end{array}\right.
   \label{1.1dddfgccwwsqqaabddcfvgbhhhnjmdfgeddvbnmklllhyussddisda}
\end{equation}
   and
   \begin{equation}
 \gamma_2:=\left\{\begin{array}{ll}
 \frac{20}{11},~~\mbox{if}~~\frac{10}{9}< m\leq2,\\
  \frac{5}{4},~~\mbox{if}~~m>2.\\
   \end{array}\right.
  \label{1.1dddfgbddcfvgbhhhnjmdfgeddvbnmklllhyussddisda}
\end{equation}
Let $m>\frac{10}{9}$,
\dref{dd1.1fghyuisdakkkllljjjkk} and \dref{ccvvx1.731426677gg}
 hold.
 Then for any $T>0, $
  one can find $C > 0$ independent of $\varepsilon$ such that 
\begin{equation}
 \begin{array}{ll}
  \disp\int_0^T\int_{\Omega}|n_{\varepsilon}F_{\varepsilon}(n_{\varepsilon})S_\varepsilon(x, n_{\varepsilon}, c_{\varepsilon})\cdot\nabla c_{\varepsilon}|^{\gamma_1} \leq C(T+1)\\
   \end{array}\label{1.1dddfgbhnjmdfgeddvbnmklllhyussddisda}
\end{equation}
and
\begin{equation}
 \begin{array}{ll}
  \disp\int_0^T\int_{\Omega}|u_\varepsilon\cdot\nabla c_\varepsilon|^{\gamma_2}   \leq C(T+1).\\
   \end{array}\label{1.1dddfgbhnjmdfgeddvbnmklllhyussdddfgggdisda}
\end{equation}
\end{lemma}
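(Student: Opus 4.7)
The plan is to reduce both estimates to pure H\"older inequalities against the space--time bounds on $n_\varepsilon$, $\nabla c_\varepsilon$ and $u_\varepsilon$ that were already established in Lemma 4.1 (combined with the crude pointwise bounds $|S_\varepsilon(x,n,c)|\le S_0(\|c_0\|_{L^\infty(\Omega)})$ coming from \eqref{x1.73142vghf48gg} and \eqref{hnjmssddaqwswddaassffssff3.10deerfgghhjuuloollgghhhyhh}, and $nF_\varepsilon(n)\le n$ from \eqref{1.ffggvbbnxxccvvn1}). After absorbing these pointwise factors, the integrand of \eqref{1.1dddfgbhnjmdfgeddvbnmklllhyussddisda} is controlled by a constant multiple of $|n_\varepsilon\nabla c_\varepsilon|^{\gamma_1}$, so both desired inequalities boil down to bounding space--time integrals of $|n_\varepsilon\nabla c_\varepsilon|^{\gamma_1}$ and $|u_\varepsilon\nabla c_\varepsilon|^{\gamma_2}$.

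First I would treat the regime $\frac{10}{9}<m\le 2$. The relevant inputs from \eqref{gghhzjscz2.ddffg5297x9630111kkhhiioott4}--\eqref{gghhzjscz2.5297x9630111kkhhiioottvvbb} are $n_\varepsilon\in L^{(3m+2)/3}(\Omega\times(0,T))$, $\nabla c_\varepsilon\in L^{4}(\Omega\times(0,T))$ and $u_\varepsilon\in L^{10/3}(\Omega\times(0,T))$. Splitting the product $|n_\varepsilon\nabla c_\varepsilon|^{\gamma_1}$ by H\"older with exponents $(3m+2)/(3\gamma_1)$ and $4/\gamma_1$ and checking that $\tfrac{3\gamma_1}{3m+2}+\tfrac{\gamma_1}{4}=1$ reproduces exactly $\gamma_1=\tfrac{4(3m+2)}{3m+14}$ as in \eqref{1.1dddfgccwwsqqaabddcfvgbhhhnjmdfgeddvbnmklllhyussddisda}. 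The same H\"older trick for $|u_\varepsilon\nabla c_\varepsilon|^{\gamma_2}$ with exponents $10/(3\gamma_2)$ and $4/\gamma_2$ yields the constraint $\tfrac{3\gamma_2}{10}+\tfrac{\gamma_2}{4}=1$, i.e.\ $\gamma_2=\tfrac{20}{11}$, as required.

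Next I would handle $m>2$. Here the available space--time information is $n_\varepsilon\in L^{8(m-1)/3}$ (from \eqref{bnmbncz2.5ghhjuggyuuyuivvffggbnnihjj}), $\nabla c_\varepsilon\in L^2$ and $u_\varepsilon\in L^{10/3}$. Matching H\"older exponents for $|n_\varepsilon\nabla c_\varepsilon|^{\gamma_1}$ now requires $\tfrac{3\gamma_1}{8(m-1)}+\tfrac{\gamma_1}{2}=1$, which gives precisely $\gamma_1=\tfrac{8(m-1)}{4m-1}$, while for $|u_\varepsilon\nabla c_\varepsilon|^{\gamma_2}$ the condition $\tfrac{3\gamma_2}{10}+\tfrac{\gamma_2}{2}=1$ delivers $\gamma_2=\tfrac{5}{4}$, in agreement with \eqref{1.1dddfgbddcfvgbhhhnjmdfgeddvbnmklllhyussddisda}. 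In both cases the resulting H\"older constant is bounded by a power of $C(T+1)$, hence itself a constant multiple of $T+1$.

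There is no real obstacle here beyond bookkeeping: the exponents $\gamma_1,\gamma_2$ were chosen in \eqref{1.1dddfgccwwsqqaabddcfvgbhhhnjmdfgeddvbnmklllhyussddisda}--\eqref{1.1dddfgbddcfvgbhhhnjmdfgeddvbnmklllhyussddisda} precisely so that the scaling budget provided by Lemma 4.1 closes under a single application of H\"older. The only point that deserves care is verifying in each of the four subcases that $\gamma_1\le (3m+2)/3$ (respectively $\le 8(m-1)/3$) and $\gamma_2\le 10/3$, so that the H\"older exponents are $\ge 1$; this is a short computation from the explicit formulas. Once \eqref{1.1dddfgbhnjmdfgeddvbnmklllhyussddisda}--\eqref{1.1dddfgbhnjmdfgeddvbnmklllhyussdddfgggdisda} are established, they will serve as the $\varepsilon$-uniform integrability needed in the passage to the limit in the chemotactic and convective terms of the $c$-equation when extracting a weak solution of \eqref{1.1}.
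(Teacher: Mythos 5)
Your proposal is correct and coincides with the paper's own argument: both reduce the two estimates to a single application of the H\"older inequality in space--time, after absorbing $|S_\varepsilon|\le S_0(\|c_0\|_{L^\infty(\Omega)})$ and $n_\varepsilon F_\varepsilon(n_\varepsilon)\le n_\varepsilon$, against exactly the bounds $n_\varepsilon\in L^{(3m+2)/3}$ (resp.\ $L^{8(m-1)/3}$), $\nabla c_\varepsilon\in L^4$ (resp.\ $L^2$) and $u_\varepsilon\in L^{10/3}$ from the preceding lemma, and your exponent computations reproduce the paper's conjugate pairs $\bigl(\tfrac{3m+2}{3m+14},\tfrac{12}{3m+14}\bigr)$, $\bigl(\tfrac{5}{11},\tfrac{6}{11}\bigr)$, $\bigl(\tfrac{4(m-1)}{4m-1},\tfrac{3}{4m-1}\bigr)$ and $\bigl(\tfrac{5}{8},\tfrac{3}{8}\bigr)$ exactly.
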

\begin{proof}
Case $\frac{10}{9}<m\leq 2:$
Due to \dref{x1.73142vghf48gg}, \dref{gghhzjscz2.ddffg5297x9630111kkhhiioott4},
 \dref{gghhzjscz2.5297x9630111kkhhiioottvvbb}, \dref{bnmbncz2.htt678hyugghiihjj}
   and   the H\"{o}lder inequality, we derive that  there exist positive
constants $C_{1}$ and $C_2$ such that 
\begin{equation}
\begin{array}{rl}
&\disp\int_{0}^T\disp\int_{\Omega}|n_{\varepsilon}F_{\varepsilon}(n_{\varepsilon})S_\varepsilon(x, n_{\varepsilon}, c_{\varepsilon})\cdot\nabla c_{\varepsilon}|^{\frac{4(3m+2)}{3m+14}}\\
\leq&\disp{S_0(\|c_0\|_{L^\infty(\Omega)})^{\frac{4(3m+2)}{3m+14}}
\left(\int_0^T\int_{\Omega}|\nabla c_{\varepsilon}|^{4}\right)^{\frac{3m+2}{3m+14}}
\left(\int_0^T\int_{\Omega}n_{\varepsilon}^{\frac{3m+2}{3}}\right)^{\frac{12}{3m+14}}}\\
\leq&\disp{C_{1}(T+1)~~\mbox{for all}~~ T > 0}\\
\end{array}
\label{ddfff5555ddffbnmbncz2ddfvgffgtyybhh.htt678ghhjjjddfghhhyuiihjj}
\end{equation}
and
\begin{equation}
\begin{array}{rl}
\disp\int_{0}^T\disp\int_{\Omega}|u_\varepsilon\cdot\nabla c_\varepsilon|^{\frac{20}{11}}
\leq&\disp{\left(\int_0^T\int_{\Omega}|\nabla c_{\varepsilon}|^{4}\right)^{\frac{5}{11}}
\left(\int_0^T\int_{\Omega}|u_{\varepsilon}|^{\frac{10}{3}}\right)^{\frac{6}{11}}}\\
\leq&\disp{C_{2}(T+1)~~\mbox{for all}~~ T > 0.}\\
\end{array}
\label{ddfff5555ddffbnmbncz2ddfvgffgddfftyybhh.httff678ghhjjjddfghhhyuiihjj}
\end{equation}

Case $m> 2:$
In view of  \dref{x1.73142vghf48gg}, \dref{gghhzjscz2.ddffg5297x9630111kkhhiioott4},
 \dref{bnmbncz2.5ghhjuggyuuyuivvffggbnnihjj}
   and   the H\"{o}lder inequality, it follows that  there exist positive
constants $C_{3}$ and $C_4$ such that 
\begin{equation}
\begin{array}{rl}
&\disp\int_{0}^T\disp\int_{\Omega}|n_{\varepsilon}F_{\varepsilon}(n_{\varepsilon})S_\varepsilon(x, n_{\varepsilon}, c_{\varepsilon})\cdot\nabla c_{\varepsilon}|^{\frac{8(m-1)}{4m-1}}\\
\leq&\disp{S_0(\|c_0\|_{L^\infty(\Omega)})^{\frac{8(m-1)}{4m-1}}
\left(\int_0^T\int_{\Omega}|\nabla c_{\varepsilon}|^{2}\right)^{\frac{4(m-1)}{4m-1}}
\left(\int_0^T\int_{\Omega}n_{\varepsilon}^{\frac{8(m-1)}{3}}\right)^{\frac{3}{4m-1}}}\\
\leq&\disp{C_{3}(T+1)~~\mbox{for all}~~ T > 0}\\
\end{array}
\label{ddfff5555ddffbnmbnffrtcz2ddfvgffgtyybhh.htt678ghhjjjddfghhhyuiihjj}
\end{equation}
and
\begin{equation}
\begin{array}{rl}
\disp\int_{0}^T\disp\int_{\Omega}|u_\varepsilon\cdot\nabla c_\varepsilon|^{\frac{5}{4}}
\leq&\disp{\left(\int_0^T\int_{\Omega}|\nabla c_{\varepsilon}|^{2}\right)^{\frac{5}{8}}
\left(\int_0^T\int_{\Omega}|u_{\varepsilon}|^{\frac{10}{3}}\right)^{\frac{3}{8}}}\\
\leq&\disp{C_{4}(T+1)~~\mbox{for all}~~ T > 0.}\\
\end{array}
\label{ddfff5555ddffbnddmbnczgghh2ddfvgffgddfftyybhh.httff678ghhjjjddfghhhyuiihjj}
\end{equation}
Finally,  combining \dref{ddfff5555ddffbnmbncz2ddfvgffgtyybhh.htt678ghhjjjddfghhhyuiihjj}--\dref{ddfff5555ddffbnddmbnczgghh2ddfvgffgddfftyybhh.httff678ghhjjjddfghhhyuiihjj}, we can derive  \dref{1.1dddfgbhnjmdfgeddvbnmklllhyussddisda} and \dref{1.1dddfgbhnjmdfgeddvbnmklllhyussdddfgggdisda}.
This completes the proof of Lemma \ref{4455lemma45630hhuujjuuyytt}.

\end{proof}

\section{Passing to the limit. Proof of Theorem  \ref{theorem3}}
With the a-priori estimates obtained in Section 2 and Section 4, we shall give the proof of Theorem  \ref{theorem3}.
Before going to do it, 
let us first give the definition of weak solution.
In what follows, for vectors $v\in \mathbb{R}^3$ and $w\in \mathbb{R}^3$, we use  $v\otimes w$ denote the matrix $(a_{ij})_{i,j\in\{1,2,3\}}\in
\mathbb{R}^{3\times3}$ with $a_{ij}:=v_iw_j$ for $i, j\in\{1, 2, 3\}$.
\begin{definition} \label{df1} 
We call $(n, c, u)$ a global weak solution of \dref{1.1} if
\begin{equation}
 \left\{\begin{array}{ll}
   n\in L_{loc}^1(\bar{\Omega}\times[0,\infty)),\\
    c \in  L_{loc}^1([0,\infty);W^{1,1}(\Omega)),\\
u \in  L_{loc}^1([0,\infty); W^{1,1}_0(\Omega;\mathbb{R}^3)),\\
 \end{array}\right.\label{dffff1.1fghyuisdakkklll}
\end{equation}
such that  $n\geq 0$ and $c\geq 0$ a.e. in
$\Omega\times(0, \infty)$,
\begin{equation}\label{726291hh}
\begin{array}{rl}
&nc~\in L^1_{loc}(\bar{\Omega}\times [0, \infty)),~~~u\otimes u \in L^1_{loc}(\bar{\Omega}\times [0, \infty);\mathbb{R}^{3\times 3}),~~\mbox{and}\\
&D(n)\nabla n,~~ nS(x,n,c)\nabla c,~~~cu~~ \mbox{and}~~~ nu~~ \mbox{belong to}~~
L^1_{loc}(\bar{\Omega}\times [0, \infty);\mathbb{R}^{3}),\\
\end{array}
\end{equation}
that $\nabla\cdot u = 0$ a.e. in
$\Omega\times(0, \infty)$, and that
\begin{equation}
\begin{array}{rl}\label{eqx45xx12112ccgghh}
\disp{-\int_0^{T}\int_{\Omega}n\varphi_t-\int_{\Omega}n_0\varphi(\cdot,0)  }=&\disp{-
\int_0^T\int_{\Omega}D(n)\nabla n\cdot\nabla\varphi+\int_0^T\int_{\Omega}n(S(x,n,c)\cdot
\nabla c)\cdot\nabla\varphi}\\
&+\disp{\int_0^T\int_{\Omega}nu\cdot\nabla\varphi}\\
\end{array}
\end{equation}
for any $\varphi\in C_0^{\infty} (\bar{\Omega}\times[0, \infty))$
  as well as
  \begin{equation}
\begin{array}{rl}\label{eqx45xx12112ccgghhjj}
\disp{-\int_0^{T}\int_{\Omega}c\varphi_t-\int_{\Omega}c_0\varphi(\cdot,0)  }=&\disp{-
\int_0^T\int_{\Omega}\nabla c\cdot\nabla\varphi-\int_0^T\int_{\Omega}nc\cdot\varphi+
\int_0^T\int_{\Omega}cu\cdot\nabla\varphi}\\
\end{array}
\end{equation}
for any $\varphi\in C_0^{\infty} (\bar{\Omega}\times[0, \infty))$  and
\begin{equation}
\begin{array}{rl}\label{eqx45xx12112ccgghhjjgghh}
\disp{-\int_0^{T}\int_{\Omega}u\varphi_t-\int_{\Omega}u_0\varphi(\cdot,0)  }=&\disp{-
\int_0^T\int_{\Omega}\nabla u\cdot\nabla\varphi-
\int_0^T\int_{\Omega}n\nabla\phi\cdot\varphi}\\
\end{array}
\end{equation}
for any $\varphi\in C_0^{\infty} (\Omega\times[0, \infty);\mathbb{R}^3)$ fulfilling
$\nabla\varphi\equiv 0$.
\end{definition}

With the above compactness properties at hand, by means of a standard extraction procedure we can
conclude that \dref{1.1} is indeed globally solvable.
%
%
%
\begin{lemma}\label{lemma45630223}
Assume that
\dref{dd1.1fghyuisdakkkllljjjkk} and \dref{ccvvx1.731426677gg}
 hold, and suppose that $m$ and $S$ satisfy \dref{ghnjmk9161gyyhuug} and \dref{x1.73142vghf48rtgyhu}--\dref{x1.73142vghf48gg}, respectively.
If   $m> \frac{10}{9}$,
 then there exists $(\varepsilon_j)_{j\in \mathbb{N}}\subset (0, 1)$ such that $\varepsilon_j\searrow 0$ as $j\rightarrow\infty$, and such that as $\varepsilon: = \varepsilon_j\searrow 0$
we have
\begin{equation} n_\varepsilon\rightarrow n ~~\mbox{a.e.}~~ \mbox{in}~~ \Omega\times (0,\infty),
\label{z666jscz2.5297x9630222222}
\end{equation}
\begin{equation}
c_\varepsilon\rightarrow c ~~\mbox{in}~~ L^{2}(\bar{\Omega}\times[0,\infty))~~\mbox{and}~~\mbox{a.e.}~~\mbox{in}~~\Omega\times(0,\infty),
 \label{zjscz2.5297x963ddfgh06662222tt3}
\end{equation}
\begin{equation}
u_\varepsilon\rightarrow u~~\mbox{in}~~ L_{loc}^2(\bar{\Omega}\times[0,\infty))~~\mbox{and}~~\mbox{a.e.}~~\mbox{in}~~\Omega\times(0,\infty),
 \label{zjscz2.5297x96302222t666t4}
\end{equation}
\begin{equation}
 \nabla u_\varepsilon\rightharpoonup \nabla u ~~\mbox{ in}~~L^{2}(\bar{\Omega}\times[0,\infty)),
 \label{zjscz2.5297x96366602222tt4455}
\end{equation}
\begin{equation}
\nabla c_\varepsilon\rightharpoonup \nabla c~~\left\{\begin{array}{ll}
\mbox{in}~~ L_{loc}^{4}(\bar{\Omega}\times[0,\infty)),~~\mbox{if}~~\frac{10}{9}< m\leq2,\\
 \mbox{in}~~ L_{loc}^{2}(\bar{\Omega}\times[0,\infty)),~~\mbox{if}~~m>2,\\
   \end{array}\right.\label{1.1ddfgghhhge666ccdf2345ddvbnmklllhyuisda}
\end{equation}
 \begin{equation}
 n_\varepsilon\rightharpoonup n~~ \left\{\begin{array}{ll}
\mbox{in}~~ L_{loc}^{\frac{3m+2}{3}}(\bar{\Omega}\times[0,\infty)),~~\mbox{if}~~\frac{10}{9}< m\leq2,\\
 \mbox{in}~~ L_{loc}^{\frac{8(m-1)}{3}}(\bar{\Omega}\times[0,\infty)),~~\mbox{if}~~m>2,\\
   \end{array}\right.\label{1.1666ddfgeccdf2345ddvbnmklllhyuisda}
\end{equation}
\begin{equation}
D_{\varepsilon}(n_{\varepsilon})\nabla n_{\varepsilon}\rightharpoonup D(n)\nabla n~~\left\{\begin{array}{ll}
\mbox{in}~~ ~ L_{loc}^{\frac{3m+2}{3m+1}}(\bar{\Omega}\times[0,\infty)),~~\mbox{if}~~\frac{10}{9}< m\leq2,\\
\mbox{in}~~~L_{loc}^{\frac{8(m-1)}{4m-1}}(\bar{\Omega}\times[0,\infty)),~~\mbox{if}~~m>2,\\
   \end{array}\right.\label{1.1666ddccvvfggfggffgyhhhhhgeccdf2345ddvbnmklllhyuisda}
\end{equation}
\begin{equation}
c_\varepsilon\stackrel{*}{\rightharpoonup} c ~~\mbox{in}~~ L^{\infty}(\Omega\times(0,\infty))
 \label{zjscz2.5297x96302222666tt3}
\end{equation}
as well as
\begin{equation}
u_\varepsilon\rightharpoonup u ~~\mbox{in}~~ L^{\frac{10}{3}}(\bar{\Omega}\times[0,\infty))
 \label{zjscz2.5297x66696302222tt4}
\end{equation}
and
\begin{equation}
Y_\varepsilon u_\varepsilon\rightarrow u ~~\mbox{in}~~ L_{loc}^2([0,\infty); L^2(\Omega))
 \label{zjscz2.5297x96302266622tt44}
\end{equation}
 with some triple $(n, c, u)$ which is a global weak solution of \dref{1.1} in the sense of Definition \ref{df1}.
\end{lemma}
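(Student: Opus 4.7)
The strategy is to first extract a subsequence along which all the required convergences (\ref{z666jscz2.5297x9630222222})--(\ref{zjscz2.5297x96302266622tt44}) hold, and then to pass to the limit in the weak formulation of the approximating problem \dref{1.1fghyuisda}. First I would collect the $\varepsilon$-independent bounds from Lemma \ref{lemma45630hhuujjuuyytt} (both the spatial estimates and the time-derivative estimates) together with the bounds from Lemmata \ref{lemmakkllgg4563025xxhjklojjkkk}, \ref{lemmaghjssddgghhmk4563025xxhjklojjkkk} and \ref{4455lemma45630hhuujjuuyytt}. Banach--Alaoglu then immediately yields a (not relabeled) subsequence along which the weak and weak-$*$ convergences (\ref{zjscz2.5297x96366602222tt4455})--(\ref{zjscz2.5297x66696302222tt4}) hold; the limits $D(n)\nabla n$, $\nabla c$, etc.\ appearing in (\ref{1.1ddfgghhhge666ccdf2345ddvbnmklllhyuisda})--(\ref{1.1666ddccvvfggfggffgyhhhhhgeccdf2345ddvbnmklllhyuisda}) will only be identified once the pointwise convergences (\ref{z666jscz2.5297x9630222222})--(\ref{zjscz2.5297x96302222t666t4}) are established.

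To produce the strong convergences I would invoke the Aubin--Lions lemma. For $c_\varepsilon$, the bound $c_\varepsilon\in L^\infty$, the spatial control $\nabla c_\varepsilon\in L^2_{loc}$ (or $L^4_{loc}$), and the time-derivative bound in $(W^{1,q})^*$ from (\ref{1.1dddfgbhnjmdfgeddvbnmklllhyussddisda}) deliver relative compactness in $L^2_{loc}(\bar\Omega\times[0,\infty))$. For $u_\varepsilon$, the Sobolev bound $\int_0^T\!\int_\Omega|\nabla u_\varepsilon|^2$ and the time-derivative control (\ref{1.1dddfgkkllbhddffgggnjmdfgeddvbnmklllhyussddisda}) analogously yield strong $L^2_{loc}$ convergence. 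The delicate case is $n_\varepsilon$: when $\tfrac{10}{9}<m\leq 2$ the bound $\int_0^T\!\int_\Omega|\nabla n_\varepsilon|^{(3m+2)/4}\leq C(T+1)$ from (\ref{gghhzjscz2.5297x9630111kkhhiioottvvbb}) combined with the $(W^{2,q})^*$- or $(W^{1,3m+2})^*$-bound on $\partial_t n_\varepsilon$ from (\ref{1.1ddfgeddvbnmklllhyuisda}) puts $n_\varepsilon$ in a compact set of some $L^p_{loc}$, hence extracts a.e. convergence; when $m>2$ I would instead apply Aubin--Lions to $n_\varepsilon^{m-1}$, using the $L^2$-control of $\nabla n_\varepsilon^{m-1}$ implicit in (\ref{bnmbncz2.5ghhjuggyuuyuivvffggbnnihjj}) together with the $(W^{2,q})^*$ time-derivative bound, then deduce the a.e. convergence of $n_\varepsilon$ itself by composing with the continuous inverse $s\mapsto s^{1/(m-1)}$. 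The Yosida convergence $Y_\varepsilon u_\varepsilon\to u$ in $L^2_{loc}$ follows from $\|Y_\varepsilon v-v\|_{L^2}\to 0$ for fixed $v\in L^2_\sigma$ together with the uniform operator bound $\|Y_\varepsilon\|_{L^2\to L^2}\leq 1$ and the strong convergence $u_\varepsilon\to u$.

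The main obstacle is the identification of the weak limits of the nonlinear terms $D_\varepsilon(n_\varepsilon)\nabla n_\varepsilon$, $n_\varepsilon F_\varepsilon(n_\varepsilon) S_\varepsilon(\cdot,n_\varepsilon,c_\varepsilon)\cdot\nabla c_\varepsilon$, $c_\varepsilon u_\varepsilon$, $n_\varepsilon u_\varepsilon$, $n_\varepsilon c_\varepsilon$ and $(Y_\varepsilon u_\varepsilon\otimes u_\varepsilon)$. For $D_\varepsilon(n_\varepsilon)\nabla n_\varepsilon$ I would write it as $\nabla\Phi_\varepsilon(n_\varepsilon)$ with $\Phi_\varepsilon'=D_\varepsilon$, observe by (\ref{ghnjmk9161gyyhuug}) and the a.e.\ convergence of $n_\varepsilon$ that $\Phi_\varepsilon(n_\varepsilon)\to\Phi(n)$ a.e., use the higher integrability estimates (\ref{gghhzjscz2.5297x9630111kkhhiioottvvbb})/(\ref{bnmbncz2.5ghhjuggyuuyuivvffggbnnihjj}) together with Vitali's convergence theorem to upgrade this to strong $L^1_{loc}$ convergence of $\Phi_\varepsilon(n_\varepsilon)$, and conclude by comparing with the weak limit in (\ref{1.1666ddccvvfggfggffgyhhhhhgeccdf2345ddvbnmklllhyuisda}). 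The tensor-sensitivity term is handled analogously: $n_\varepsilon F_\varepsilon(n_\varepsilon)S_\varepsilon(x,n_\varepsilon,c_\varepsilon)\to nS(x,n,c)$ a.e. by continuity of $S$ and $\rho_\varepsilon\to 1$, uniform integrability is provided by (\ref{1.1dddfgbhnjmdfgeddvbnmklllhyussddisda}), so Vitali gives $L^{\gamma_1}$-strong convergence, which paired with the weak convergence of $\nabla c_\varepsilon$ yields the desired $L^1_{loc}$-weak convergence of the product. The products $c_\varepsilon u_\varepsilon$, $n_\varepsilon u_\varepsilon$, $n_\varepsilon c_\varepsilon$ and $(Y_\varepsilon u_\varepsilon)\otimes u_\varepsilon$ are all of the form ``strong $\times$ weak'' (using (\ref{zjscz2.5297x96302222t666t4}), (\ref{zjscz2.5297x96302266622tt44}) and the a.e.\ convergences) and hence converge in the sense of distributions.

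Finally, with all nonlinear limits identified, I would pass to the limit $\varepsilon=\varepsilon_j\searrow 0$ in the integral identities obtained by testing the three equations of \dref{1.1fghyuisda} against arbitrary $\varphi\in C_0^\infty(\bar\Omega\times[0,\infty))$ (resp.\ $\varphi\in C_0^\infty(\Omega\times[0,\infty);\mathbb{R}^3)$ with $\nabla\cdot\varphi\equiv 0$), thereby recovering (\ref{eqx45xx12112ccgghh})--(\ref{eqx45xx12112ccgghhjjgghh}). Nonnegativity of $n$ and $c$ and the divergence-free condition $\nabla\cdot u=0$ are preserved in the limit since they hold a.e.\ along the sequence. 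The regularity requirements (\ref{dffff1.1fghyuisdakkklll})--(\ref{726291hh}) follow from the weak-convergence bounds already established, completing the verification that $(n,c,u)$ is a global weak solution in the sense of Definition \ref{df1}.
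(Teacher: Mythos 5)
Your proposal is correct in outline and follows the same architecture as the paper: collect the $\varepsilon$-uniform space--time bounds, apply Aubin--Lions to $n_\varepsilon$ (resp.\ $n_\varepsilon^{m-1}$ when $m>2$), $c_\varepsilon$ and $u_\varepsilon$ to extract strong/a.e.\ convergence, identify the nonlinear limits, and pass to the limit in the weak formulation; the Yosida step is handled identically. The one place where you genuinely diverge from the paper is the chemotactic flux $n_{\varepsilon}F_{\varepsilon}(n_{\varepsilon})S_\varepsilon(x,n_{\varepsilon},c_{\varepsilon})\cdot\nabla c_{\varepsilon}$. The paper does \emph{not} treat this as ``strong $\times$ weak'': it first upgrades $\nabla c_\varepsilon$ to a.e.\ convergence by bounding $g_\varepsilon:=-n_\varepsilon c_\varepsilon-u_\varepsilon\cdot\nabla c_\varepsilon$ in $L^{\beta_3}$, invoking maximal parabolic regularity to get $c_\varepsilon$ bounded in $L^{\beta_3}((0,T);W^{2,\beta_3}(\Omega))$, applying Aubin--Lions a second time to obtain $\nabla c_{\varepsilon_j}\to\nabla c$ a.e., and then identifying the weak limit of the whole product via the Egorov theorem. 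Your route replaces this with strong convergence of the factor $n_\varepsilon F_\varepsilon(n_\varepsilon)S_\varepsilon$ (by Vitali) paired against the weak convergence of $\nabla c_\varepsilon$ in $L^4_{loc}$ (resp.\ $L^2_{loc}$); this works, but note that the uniform integrability you need is that of $n_\varepsilon^{4/3}$ (resp.\ $n_\varepsilon^{2}$), which comes from the $L^{(3m+2)/3}$ (resp.\ $L^{8(m-1)/3}$) bounds in \dref{gghhzjscz2.5297x9630111kkhhiioottvvbb} and \dref{bnmbncz2.5ghhjuggyuuyuivvffggbnnihjj}, not from the bound \dref{1.1dddfgbhnjmdfgeddvbnmklllhyussddisda} on the product that you cite. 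Your approach buys you the omission of the parabolic-regularity step entirely; the paper's approach delivers the stronger byproduct that $\nabla c_\varepsilon$ converges a.e., which it then reuses when identifying the other product limits. Your treatment of $D_\varepsilon(n_\varepsilon)\nabla n_\varepsilon$ via the primitive $\Phi_\varepsilon$ and distributional identification of $\nabla\Phi(n)$ is, if anything, more explicit than what the paper writes down.
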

\begin{proof}

Firstly,  letting
$$
 \beta_1:=\left\{\begin{array}{ll}
 \frac{3m+2}{4},~~\mbox{if}~~\frac{10}{9}< m\leq2,\\
  2,~~\mbox{if}~~m>2,\\
   \end{array}\right.
   $$
   $$
 \gamma:=\left\{\begin{array}{ll}
 1,~~\mbox{if}~~\frac{10}{9}< m\leq2,\\
  m-1,~~\mbox{if}~~m>2,\\
   \end{array}\right.
   $$
$$
 W_1:=\left\{\begin{array}{ll}
 W^{2,q}(\Omega),~~\mbox{if}~~\frac{10}{9}< m<\frac{26}{21}~~\mbox{or}~~m>2,\\
W^{1,3m+2}(\Omega)~~\mbox{if}~~\frac{26}{21}\leq m\leq2,\\
   \end{array}\right.
   $$
$$
 \beta_2:=\left\{\begin{array}{ll}
 4,~~\mbox{if}~~\frac{10}{9}< m\leq2,\\
  2,~~\mbox{if}~~m>2\\
   \end{array}\right.
   $$
    as well as
   $$
 W_2:=\left\{\begin{array}{ll}
W^{1,\frac{3m+2}{3m-1}}(\Omega)~~\mbox{if}~~\frac{10}{9}< m\leq2,\\
  W^{1,2}(\Omega),~~\mbox{if}~~m>2\\
   \end{array}\right.
   $$
   and
   $$
 W_3:=\left\{\begin{array}{ll}
W^{1,\frac{3m+2}{3m-1}}(\Omega)~~\mbox{if}~~\frac{10}{9}< m<\frac{4}{3},\\
  W^{1,2}(\Omega),~~\mbox{if}~~m\geq\frac{4}{3},\\
   \end{array}\right.
   $$
   where $q$ is  given by Lemma \ref{lemma45630hhuujjuuyytt}.
%
%
%
Now,
in light of Lemma \ref{lemmakkllgg4563025xxhjklojjkkk}, Lemma \ref{lemmaghjssddgghhmk4563025xxhjklojjkkk} and Lemma \ref{lemma45630hhuujjuuyytt}, for some $C_1> 0$ which is independent of $\varepsilon$, we have
\begin{equation}
\begin{array}{rl}
\|n_{\varepsilon}^{\gamma}\|_{L^{\beta_1}_{loc}([0,\infty); W^{1,\beta_1}(\Omega))}\leq C_1(T+1)~~~\mbox{and}~~~\|\partial_{t}n_{\varepsilon}^{\gamma}\|_{L^{1}_{loc}([0,\infty); W_1^*)}\leq C_1(T+1)
\end{array}
\label{ggjjssdffzcddffcfccvvfggvvvvgccvvvgjscz2.5297x963ccvbb111kkuu}
\end{equation}
as well as
\begin{equation}
\begin{array}{rl}
\|c_{\varepsilon}\|_{L^{2}_{loc}([0,\infty); W^{1,2}(\Omega))}\leq C_1(T+1)~~~\mbox{and}~~~\|\partial_{t}c_{\varepsilon}\|_{L^{1}_{loc}([0,\infty); W_2^*)}\leq C_1(T+1)
\end{array}
\label{ggjjssdffzcddffcfccvvfggvvddffvvgccvvvgjscz2.5297x963ccvbb111kkuu}
\end{equation}
and
\begin{equation}
\begin{array}{rl}
\|u_{\varepsilon}\|_{L^{2}_{loc}([0,\infty); W^{1,2}(\Omega))}\leq C_1(T+1)~~~\mbox{and}~~~\|\partial_{t}u_{\varepsilon}\|_{L^{1}_{loc}([0,\infty); W_3^*)}\leq C_1(T+1).
\end{array}
\label{ggjjssdffzcddffcfccvvfggvvvvgccffghhvvvgjscz2.5297x963ccvbb111kkuu}
\end{equation}
Now, applying  the Aubin-Lions lemma (\cite{Simon})  to \dref{ggjjssdffzcddffcfccvvfggvvvvgccvvvgjscz2.5297x963ccvbb111kkuu}--\dref{ggjjssdffzcddffcfccvvfggvvvvgccffghhvvvgjscz2.5297x963ccvbb111kkuu}, we can derive that
\begin{equation}
\begin{array}{rl}
(n_{\varepsilon}^\gamma)_{\varepsilon\in(0,1)}~~~\mbox{is strongly precompact in}~~~L^{\beta_1}_{loc}(\bar{\Omega}\times[0,\infty))
\end{array}
\label{ggjjssdffzcddffcfcffffghcvvfggvvvvgccvvvgjscz2.5297x963ccvbb111kkuu}
\end{equation}
as well as
\begin{equation}
\begin{array}{rl}
(c_{\varepsilon})_{\varepsilon\in(0,1)}~~~\mbox{is strongly precompact in}~~~L^{2}_{loc}(\bar{\Omega}\times[0,\infty))
\end{array}
\label{ggjjssdffzcddddffcfcchhvvfggvvddffvvgccvvvgjscz2.5297x963ccvbb111kkuu}
\end{equation}
and
\begin{equation}
\begin{array}{rl}
(u_{\varepsilon})_{\varepsilon\in(0,1)}~~~\mbox{is strongly precompact in}~~~L^{2}_{loc}(\bar{\Omega}\times[0,\infty)).
\end{array}
\label{ggjjssdffzcccddffcfccvvfggvvggvvgccffghhvvvgjscz2.5297x963ccvbb111kkuu}
\end{equation}
Therefore, there exist a subsequence $\varepsilon=\varepsilon_j\subset (0,1)_{j\in \mathbb{N}}$
and the limit functions $n$ and $c$
such that \dref{zjscz2.5297x963ddfgh06662222tt3}--\dref{1.1ddfgghhhge666ccdf2345ddvbnmklllhyuisda} holds.
Moreover, for each fixed $T\in(0, \infty)$, \dref{zjscz2.5297x96302222t666t4} implies that there exists a   null set $N_T\in(0, T)$
such that we  can pick a subsequence which we still denote by $(\varepsilon_j)_{j\in N}$ fulfilling
\begin{equation}
u_\varepsilon(\cdot,t)\rightarrow u(\cdot,t) ~~\mbox{in}~~ L^{2}(\Omega)~~~\mbox{for all}~~~ t\in(0,T )\backslash N_T~~\mbox{as}~~ \varepsilon = ¦Å\varepsilon_j\searrow0.
 \label{zjscz2.5fddcfrrtyuuifgtt297xddssdrtt963022334466622tt4}
\end{equation}

Next, in view of \dref{ggjjssdffzcddffcfccvvfggvvvvgccvvvgjscz2.5297x963ccvbb111kkuu}, an Aubin--Lions lemma (see e.g.
\cite{Simon}) applies to yield strong precompactness of $(n_\varepsilon^{\gamma})_{\varepsilon\in(0,1)}$ in
$L^{\beta_1}(\Omega\times(0,T)),$ whence along a suitable subsequence we may derive that $n_\varepsilon^{\gamma}\rightarrow z^{\gamma}_1$
 and hence
 \begin{equation}
 n_{\varepsilon}\rightarrow z_1~~\mbox{ a.e. in}~~\Omega\times(0,\infty)~~\mbox{for some nonnegative measurable}~~z_1 : \Omega\times(0,\infty)\rightarrow \mathbb{R}.
 \label{z666jscz2.5297ffggxddrfff9630222222}
\end{equation}
The above estimate \dref{z666jscz2.5297ffggxddrfff9630222222} combined with energy inequality \dref{ggjjssdffzcddffcfcffffghcvvfggvvvvgccvvvgjscz2.5297x963ccvbb111kkuu},
\dref{z666jscz2.5297ffggxddrfff9630222222}, and the Egorov
theorem ensures
 $z_1 = n,$ therefore, we deduce that \dref{z666jscz2.5297x9630222222}.
Now, let
$$
 \beta_3:=\left\{\begin{array}{ll}
 \frac{3m+2}{3},~~\mbox{if}~~\frac{10}{9}< m\leq\frac{38}{33},\\
  \gamma_2,~~\mbox{if}~~m>\frac{38}{33},\\
   \end{array}\right.
   $$
   where $\gamma_2$ is given by \dref{1.1dddfgbddcfvgbhhhnjmdfgeddvbnmklllhyussddisda}.
On the other hand, observing that
$$
 \beta_3\leq\left\{\begin{array}{ll}
 \frac{3m+2}{3},~~\mbox{if}~~\frac{10}{9}< m\leq 2,\\
  \frac{8(m-1)}{3},~~\mbox{if}~~m>\frac{38}{33},\\
   \end{array}\right.
   $$
in light of
 \dref{hnjmssddaqwswddaassffssff3.10deerfgghhjuuloollgghhhyhh}, \dref{gghhzjscz2.5297x9630111kkhhiioottvvbb}, \dref{bnmbncz2.5ghhjuggyuuyuivvffggbnnihjj}, applying the Young inequality,
 we derive that  there exists a positive
constant $C_{1}$ such that
 \begin{equation}
 \begin{array}{ll}
  \disp\int_0^T\int_{\Omega}|n_\varepsilon c_\varepsilon|^{\beta_3}   \leq C_1(T+1).\\
   \end{array}\label{1.1dccvvvddfffggbhnjmdfgeddvbnmklllhyussdddfgggdisda}
\end{equation}

Next, let $g_\varepsilon(x, t) := -n_{\varepsilon}c_\varepsilon-u_{\varepsilon}\cdot\nabla c_{\varepsilon}.$
Therefore,  recalling $1<\beta_3\leq\gamma_2$, by some basic calculation, we  can get
\begin{equation}
 \begin{array}{ll}
  \disp\int_0^T\int_{\Omega}|g_\varepsilon|^{\beta_3}   \leq C_2(T+1).\\
   \end{array}\label{1.1dccvvvddfffggbhnjmdfgeddvbnmklddfggllhyussdddfgggdisda}
\end{equation}
for a positive constant $C_2.$
From this,
 we may invoke the standard parabolic regularity theory  to infer that
$(c_{\varepsilon})_{\varepsilon\in(0,1)}$ is bounded in
$L^{\beta_3} ((0, T); W^{2,\beta_3}(\Omega))$.
Thus,  by \dref{1.1dddfgbhnjmdfgeddvbnmklllhyussddisda} and the Aubin--Lions lemma we derive that  the relative compactness of $(c_{\varepsilon})_{\varepsilon\in(0,1)}$ in
$L^{\beta_3} ((0, T); W^{1,\beta_3}(\Omega))$. We can pick an appropriate subsequence which is
still written as $(\varepsilon_j )_{j\in \mathbb{N}}$ such that $\nabla c_{\varepsilon_j} \rightarrow z_2$
 in $L^{\beta_3} (\Omega\times(0, T))$ for all $T\in(0, \infty)$ and some
$z_2\in L^{\beta_3} (\Omega\times(0, T))$ as $j\rightarrow\infty$, hence $\nabla c_{\varepsilon_j} \rightarrow z_2$ a.e. in $\Omega\times(0, \infty)$
 as $j \rightarrow\infty$.
In view  of \dref{1.1ddfgghhhge666ccdf2345ddvbnmklllhyuisda} and  the Egorov theorem we conclude  that
$z_2=\nabla c,$ and whence
\begin{equation}
\nabla c_\varepsilon\rightarrow \nabla c~~\begin{array}{ll}
  ~\mbox{a.e.}~~\mbox{in}~~\Omega\times(0,\infty)~~~\mbox{as}~~\varepsilon =\varepsilon_j\searrow0.
   \end{array}\label{1.1ddhhyujiiifgghhhge666ccdf2345ddvbnmklllhyuisda}
\end{equation}
Now, we will conclude that the triplet $(n, c, u)$ is the desired solution in the sense of Definition \ref{df1}. Indeed, we first notice that from the nonnegativity of $n_\varepsilon$ and $c_\varepsilon$, the estimate of \dref{zjscz2.5297x96366602222tt4455} and $\nabla\cdot u_{\varepsilon} = 0$, it is easy to see  that $n \geq 0$ and $c\geq 0$
and
$\nabla\cdot u = 0$ a.e. in $\Omega\times (0, \infty)$.
On the other hand, in view of \dref{1.1dddfgbhnjmdfgeddvbnmklllhyussddisda}, we can infer from
\dref{gghhzjscz2.5297x9630111kkhhiioottvvbb}
and
\dref{bnmbncz2.5ghhjuggyuuyuivvffggbnnihjj} that
\begin{equation}
n_{\varepsilon}F_{\varepsilon}(n_{\varepsilon})S_\varepsilon(x, n_{\varepsilon}, c_{\varepsilon})\nabla c_{\varepsilon}\rightharpoonup z_3~~\begin{array}{ll}
  ~~~\mbox{in}~~ L^{\gamma_1}(\Omega\times(0,T))~~\mbox{for each}~~ T \in(0, \infty)
   \end{array}
\label{1.1666ddffvgbhhfgeccdf2345ddvbnmklllhyuisda}
\end{equation}
   with $\gamma_1$ is given by \dref{1.1dddfgccwwsqqaabddcfvgbhhhnjmdfgeddvbnmklllhyussddisda}.
On the other hand, in view of  \dref{x1.73142vghf48rtgyhu}, \dref{1.ffggvbbnxxccvvn1}, \dref{z666jscz2.5297x9630222222}, \dref{zjscz2.5297x963ddfgh06662222tt3} and \dref{1.1ddhhyujiiifgghhhge666ccdf2345ddvbnmklllhyuisda} imply that
\begin{equation}
n_{\varepsilon}F_{\varepsilon}(n_{\varepsilon})S_\varepsilon(x, n_{\varepsilon}, c_{\varepsilon})\nabla c_{\varepsilon}\rightarrow
 nS(x, n, c)\nabla c
 \begin{array}{ll}
  ~~\mbox{a.e.}~~\mbox{in}~~\Omega\times(0,\infty)~~~\mbox{as}~~\varepsilon =\varepsilon_j\searrow0.
   \end{array}
   \label{1.1666ccccftyyuufvggddccvgffvgbhhfgeccdf2345ddvbnmklllhyuisda}
\end{equation}
Again by the Egorov theorem, we gain $z_3=nS(x, n, c)\nabla c,$ and hence \dref{1.1666ddffvgbhhfgeccdf2345ddvbnmklllhyuisda} can be rewritten as
\begin{equation}
n_{\varepsilon}F_{\varepsilon}(n_{\varepsilon})S_\varepsilon(x, n_{\varepsilon}, c_{\varepsilon})\nabla c_{\varepsilon}\rightharpoonup
 nS(x, n, c)\nabla c
 \begin{array}{ll}
  ~\mbox{in}~ L^{\gamma_1}(\Omega\times(0,T))~\mbox{for each}~~ T \in(0, \infty)
   \end{array}
   \label{1.1666ccfvggddccvgffvgbhhfgeccdf2345ddvbnmklllhyuisda}
\end{equation}
as $\varepsilon =\varepsilon_j\searrow0.$
Next, employing almost exactly the same arguments as in the proof of \dref{1.1666ddffvgbhhfgeccdf2345ddvbnmklllhyuisda}--\dref{1.1666ccfvggddccvgffvgbhhfgeccdf2345ddvbnmklllhyuisda} (the minor necessary changes are left as an easy exercise to the reader), and taking advantage of \dref{hnjmssddaqwswddaassffssff3.10deerfgghhjuuloollgghhhyhh}, \dref{gghhzjscz2.ddffg5297x9630111kkhhiioott4}--\dref{bnmbncz2.5ghhjuggyuuyuivvffggbnnihjj} and \dref{z666jscz2.5297x9630222222}--\dref{zjscz2.5297x96302222t666t4}, we conclude that
\dref{1.1ddfgghhhge666ccdf2345ddvbnmklllhyuisda}--\dref{zjscz2.5297x66696302222tt4} is true  as well as
\begin{equation}
c_\varepsilon u_\varepsilon\rightarrow cu ~~\mbox{ in}~~ L^{1}_{loc}(\bar{\Omega}\times(0,\infty))~~~\mbox{as}~~\varepsilon=\varepsilon_j\searrow0,
 \label{zxxcvvfgggjscddfffcvvfggz2.5297x96302222tt4}
\end{equation}
\begin{equation}
n_\varepsilon c_\varepsilon \rightharpoonup nc ~~\mbox{ in}~~ L^{\gamma_4}_{loc}(\bar{\Omega}\times(0,\infty))~~~\mbox{as}~~\varepsilon=\varepsilon_j\searrow0
 \label{zxxcvvfgggjscddfffcvvfggzddfgg2.5297x96302222tt4}
\end{equation}
and
\begin{equation}
n_\varepsilon u_\varepsilon\rightharpoonup nu ~~\mbox{ in}~~ L^{\gamma_5}(\Omega\times(0,T))~~~\mbox{as}~~\varepsilon=\varepsilon_j\searrow0
 \label{zxxcvvfgggjscddfffcvvfggz2.5297ffghhx96302222tt4}
\end{equation}
for each $T \in(0, \infty),$
where
\begin{equation}
 \gamma_4:=\left\{\begin{array}{ll}
 {\frac{1}{\frac{1}{4}+\frac{3}{3m+2}}},~~\mbox{if}~~\frac{10}{9}< m\leq2,\\
  {\frac{1}{\frac{1}{2}+\frac{3}{8(m-1)}}},~~\mbox{if}~~m>2\\
   \end{array}\right.
   \label{1.1dddfgccwwsddrttqqaabddcfvgbhhhffggnjmdfgeddvbnmklllhyussddisda}
\end{equation}
and
\begin{equation}
 \gamma_5:=\left\{\begin{array}{ll}
 {\frac{1}{\frac{3}{10}+\frac{3}{3m+2}}},~~\mbox{if}~~\frac{10}{9}< m\leq2,\\
  {\frac{1}{\frac{3}{10}+\frac{3}{8(m-1)}}},~~\mbox{if}~~m>2.\\
   \end{array}\right.
   \label{1.1dddfgccwwsddrttqqaabdddffggdcfvgbhhhffggnjmdfgeddvbnmklllhyussddisda}
\end{equation}
Now, by
\dref{zjscz2.5297x963ddfgh06662222tt3}--\dref{zjscz2.5297x96366602222tt4455}, \dref{1.1666ddfgeccdf2345ddvbnmklllhyuisda}, we conclude that  \dref{dffff1.1fghyuisdakkklll}.
Now, employing  \dref{zjscz2.5297x96302222t666t4} and
using the fact that
 $\|Y_{\varepsilon}\varphi\|_{L^2(\Omega)} \leq \|\varphi\|_{L^2(\Omega)}(\varphi\in L^2_{\sigma}(\Omega))$
and
$Y_{\varepsilon}\varphi \rightarrow \varphi$ in $L^2(\Omega)$ as $\varepsilon\searrow0$, we can  obtain
\begin{equation}
\begin{array}{rl}
\left\|Y_{\varepsilon}u_{\varepsilon}(\cdot,t)-u(\cdot,t)\right\|_{L^2(\Omega)}  \leq&\disp{\left\|Y_{\varepsilon}[u_{\varepsilon}(\cdot,t)-u(\cdot,t)]\right\|_{L^2(\Omega)}+
\left\|Y_{\varepsilon}u(\cdot,t)-u(\cdot,t)\right\|_{L^2(\Omega)}}\\
\leq&\disp{\left\|u_{\varepsilon}(\cdot,t)-u(\cdot,t)\right\|_{L^2(\Omega)}+
\left\|Y_{\varepsilon}u(\cdot,t)-u(\cdot,t)\right\|_{L^2(\Omega)}}\\
\rightarrow&\disp{0~~\mbox{as}~~\varepsilon:=\varepsilon_j\searrow0.}\\
\end{array}
\label{ggjjssdffzccvvvvggjscz2.5297x963ccvbb111kkuu}
\end{equation}
On the other hand,  observing that
\begin{equation}
\begin{array}{rl}
\left\|Y_{\varepsilon}u_{\varepsilon}(\cdot,t)-u(\cdot,t)\right\|_{L^2(\Omega)}^2  \leq&\disp{\left(\|Y_{\varepsilon}u_{\varepsilon}(\cdot,t)|\|_{L^2(\Omega)}+\|u(\cdot,t)|\|_{L^2(\Omega)}\right)^2}\\
\leq&\disp{\left(\|u_{\varepsilon}(\cdot,t)|\|_{L^2(\Omega)}+\|u(\cdot,t)|\|_{L^2(\Omega)}\right)^2}\\
\leq&\disp{C_2~~\mbox{for all}~~t\in(0,\infty)~~\mbox{and}~~\varepsilon\in(0,1) }\\
\end{array}
\label{ggjjssdffzccffggvvvvggjscz2.5297x963ccvbb111kkuu}
\end{equation}
with some $C_2 > 0$.
Now, thanks to \dref{zjscz2.5297x96302222t666t4}, \dref{ggjjssdffzccvvvvggjscz2.5297x963ccvbb111kkuu} and \dref{ggjjssdffzccffggvvvvggjscz2.5297x963ccvbb111kkuu} and the dominated convergence theorem, we conclude that
\begin{equation}
\begin{array}{rl}
\disp\int_{0}^T\|Y_{\varepsilon}u_{\varepsilon}(\cdot,t)-u(\cdot,t)\|_{L^2(\Omega)}^2dt\rightarrow0 ~~\mbox{as}~~\varepsilon:=\varepsilon_j\searrow0 ~~~\mbox{for all}~~T>0.
\end{array}
\label{ggjjssdffzccffggvvvvgccvvvgjscz2.5297x963ccvbb111kkuu}
\end{equation}
Thus, \dref{zjscz2.5297x96302266622tt44} holds.
Now, in conjunction with
  \dref{zjscz2.5297x96302266622tt44}  and \dref{zjscz2.5297x96302222t666t4}, we can obtain
\begin{equation}
\begin{array}{rl}
Y_{\varepsilon}u_{\varepsilon}\otimes u_{\varepsilon}\rightarrow u \otimes u ~~\mbox{in}~~L^1_{loc}(\bar{\Omega}\times[0,\infty))~~\mbox{as}~~\varepsilon:=\varepsilon_j\searrow0.
\end{array}
\label{ggjjssdffzccfccvvfgghjjjvvvvgccvvvgjscz2.5297x963ccvbb111kkuu}
\end{equation}
Therefore, \dref{1.1666ccfvggddccvgffvgbhhfgeccdf2345ddvbnmklllhyuisda}--\dref{zxxcvvfgggjscddfffcvvfggz2.5297ffghhx96302222tt4} and \dref{ggjjssdffzccfccvvfgghjjjvvvvgccvvvgjscz2.5297x963ccvbb111kkuu} imply the integrability of
$nS(x, n, c)\nabla c, nc,nu$ and $cu,u\otimes u$ in \dref{726291hh}.
Based on \dref{zjscz2.5297x96366602222tt4455}--\dref{zjscz2.5297x96302266622tt44},
\dref{1.1666ccfvggddccvgffvgbhhfgeccdf2345ddvbnmklllhyuisda}--\dref{zxxcvvfgggjscddfffcvvfggz2.5297ffghhx96302222tt4} and \dref{ggjjssdffzccfccvvfgghjjjvvvvgccvvvgjscz2.5297x963ccvbb111kkuu}, the integral
identities \dref{eqx45xx12112ccgghh}--\dref{eqx45xx12112ccgghhjjgghh}
can be achieved by standard arguments from the corresponding
weak formulations in the regularized system \dref{1.1fghyuisda} upon taking $\varepsilon= \varepsilon_j\searrow 0.$
The proof of Lemma \ref{lemma45630223} is completed.
\end{proof}

{\bf The proof of Theorem \ref{theorem3}}~
The statement is evidently implied by Lemma \ref{lemma45630223}.

{\bf Acknowledgement}:
This work is partially supported by the
Natural Science Foundation of Shandong Province of China (No. ZR2016AQ17 and No. ZR2015PA004), the National Natural
Science Foundation of China (No. 11601215),
 and the Doctor Start-up Funding of Ludong University (No. LA2016006).

\end{document}